\documentclass[preprint]{amsart}
\usepackage{amsthm,amsmath,amssymb}
\usepackage{stmaryrd,mathrsfs}
\usepackage{esint}
\usepackage{lineno,hyperref}


%
%

\newcommand{\ud}[0]{\,\mathrm{d}}

\newcommand{\dist}[0]{\operatorname{dist}}

\newcommand{\abs}[1]{|#1|}
\newcommand{\Babs}[1]{\Big|#1\Big|}
\newcommand{\Norm}[2]{\|#1\|_{#2}}

\newcommand{\BNorm}[2]{\Big\|#1\Big\|_{#2}}
\newcommand{\pair}[2]{\langle #1,#2 \rangle}
\newcommand{\Bpair}[2]{\Big\langle #1,#2 \Big\rangle}
\newcommand{\ave}[1]{\langle #1\rangle}


\newcommand{\BMO}[0]{\operatorname{BMO}}
\newcommand{\supp}[0]{\operatorname{spt}}
\newcommand{\loc}[0]{\operatorname{loc}}

\newcommand{\sign}[0]{\operatorname{sgn}}
\renewcommand{\Re}[0]{\operatorname{Re}}

\newcommand{\R}{\mathbb{R}}
\newcommand{\C}{\mathbb{C}}

\newcommand{\Z}{\mathbb{Z}}

\newcommand{\Exp}[0]{\mathbb{E}}
\newcommand{\eps}[0]{\varepsilon}

\newcommand{\diam}[0]{\operatorname{diam}}


\numberwithin{equation}{section}

\theoremstyle{plain}
\newtheorem{theorem}{Theorem}[subsection]
\newtheorem{proposition}[theorem]{Proposition}

\newtheorem{lemma}[theorem]{Lemma}

\theoremstyle{definition}
\newtheorem{definition}[theorem]{Definition}

\theoremstyle{remark}
\newtheorem{remark}[theorem]{Remark}
\newtheorem{example}[theorem]{Example}

\makeatletter
\@namedef{subjclassname@2020}{%
  \textup{2020} Mathematics Subject Classification}
\makeatother

\title[Boundedness of commutators]{The $L^p$-to-$L^q$ boundedness of commutators with applications to the Jacobian operator}

\author{Tuomas P.~Hyt\"onen}
\address{Department of Mathematics and Statistics, P.O. Box~68 (Pietari Kalmin katu 5),\\ FI-00014 University of Helsinki, Finland}
\email{tuomas.hytonen@helsinki.fi}
\thanks{The author was supported by the Academy of Finland via project Nos.~314829 (Frontiers of singular integrals) and 307333 (Centre of Excellence in Analysis and Dynamics Research).}

\keywords{Commutator; singular integral; Jacobian determinant}
\subjclass[2020]{42B20; 42B25; 42B37; 35F20; 47B47}

\begin{document}

\maketitle

\begin{abstract}
Supplying the missing necessary conditions, we complete the characterisation of the $L^p\to L^q$ boundedness of commutators $[b,T]$ of pointwise multiplication and Calder\'on--Zygmund operators, for arbitrary pairs of $1<p,q<\infty$ and under minimal non-degeneracy hypotheses on $T$. 

For $p\leq q$ (and especially $p=q$), this extends a long line of results under more restrictive assumptions on $T$.
In particular, we answer a recent question of Lerner, Ombrosi, and Rivera-R{\'{\i}}os by showing that $b\in\BMO$ is necessary for the $L^p$-boundedness of $[b,T]$ for any non-zero homogeneous singular integral~$T$. We also deal with iterated commutators and weighted spaces.

For $p>q$, our results are new even for special classical operators with smooth kernels. As an application, we show that
every $f\in L^p(\R^d)$ can be represented as a convergent series of normalised Jacobians $Ju=\det\nabla u$ of $u\in \dot W^{1,dp}(\R^d)^d$.
 This extends, from $p=1$ to $p>1$, a result of Coifman, Lions, Meyer and Semmes about $J:\dot W^{1,d}(\R^d)^d\to H^1(\R^d)$, and supports a conjecture of Iwaniec about the solvability of the equation $Ju=f\in L^p(\R^d)$.

\bigskip

\noindent 
\textsc{R\'esum\'e.}
En fournissant les conditions n\'ecessaires manquantes, nous compl\'e\-tons la caract\'erisation de la bornitude $L^p \to L^q $ des commutateurs $[b, T] $ de multiplication ponctuelle et des op\'erateurs de Calder\'on--Zygmund, pour des paires arbitraires de $1<p,q<\infty $ et avec des hypoth\`eses de non-d\'eg\'en\'erescence minimales sur $T$.

Pour $p\leq q $ (et en particulier $p=q $), cela \'etend une longue ligne de r\'esultats avec des hypoth\`eses plus restrictives sur $T$.
En particulier, nous r\'epondons \`a une question r\'ecente de Lerner, Ombrosi et Rivera-R{\'{\i}}os en montrant que $b \in \BMO$ est n\'ecessaire pour la $L^p$-bornitude de $[b,T]$ pour toute int\'egrale singuli\`ere homog\`ene non nulle~$T$. Nous traitons \'egalement des commutateurs it\'er\'es et des espaces avec des poids.

Pour $p>q$, nos r\'esultats sont nouveaux m\^eme pour les op\'erateurs classiques sp\'eciaux \`a noyaux lisses. Comme application, nous montrons que
chaque $f  in L^p (\R^d) $ peut \^etre repr\'esent\'e comme une s\'erie convergente de Jacobiens normalis\'es $Ju = \det \nabla u$ de $u \in \dot W^ {1,dp} (\R^d)^d$.
 Ceci \'etend, de $p=1 $ \`a $p>1 $, un r\'esultat de Coifman, Lions, Meyer et Semmes concernant $J: \dot W^{1,d} (\R^d)^d \to H^1 (\R^d)$, et supporte une conjecture d'Iwaniec sur la r\'esolvabilit\'e de l'\'equation $Ju = f \in L^p (\R^d) $.
 \end{abstract}

\section{Introduction}

The first goal of this paper is to complete the following picture of the $L^p(\R^d)$-to-$L^q(\R^d)$ boundedness properties of commutators of pointwise multiplication and singular integral operators:

\begin{theorem}\label{thm:summary}
Let $1<p,q<\infty$, let $T$ be a ``non-degenerate'' Calder\'on--Zygmund operator on $\R^d$, and let $b\in L^1_{\loc}(\R^d)$. Then the commutator
\begin{equation*}
  [b,T]:f\mapsto bTf-T(bf)
\end{equation*}
defines a bounded operator $[b,T]:L^p(\R^d)\to L^q(\R^d)$ if and only if:
\begin{itemize}
  \item $p=q$ and $b$ has bounded mean oscillation, or 
  \item $\displaystyle p<q\leq p^* =\frac{pd}{(d-p)_+}$ and $b$ is $\alpha$-H\"older continuous for $\displaystyle \alpha=\Big(\frac{1}{p}-\frac{1}{q}\Big)d$, or
  \item $q>p^*$ and $b$ is constant, or
  \item $p>q$ and $b=a+c$, where $a\in L^r(\R^d)$ for $\displaystyle \frac1r=\frac1q-\frac1p$, and $c$ is constant.
\end{itemize}
\end{theorem}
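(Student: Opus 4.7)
The theorem gathers four regimes, each with its own sufficient and necessary direction, and I would organise the proof accordingly.

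\textbf{Sufficiency} is the lighter half. For $p=q$ with $b\in\BMO$, this is the classical Coifman--Rochberg--Weiss theorem. For $p<q\leq p^*$ with $b$ H\"older continuous of order $\alpha=(1/p-1/q)d$, this is Janson's theorem. The case $q>p^*$ with $b$ constant is trivial, since then $[b,T]=0$. For the new regime $p>q$, writing $b=a+c$ with $a\in L^r$ we have, by H\"older's inequality with $\tfrac1r+\tfrac1p=\tfrac1q$ and the $L^p$- and $L^q$-boundedness of $T$,
\begin{equation*}
  \Norm{[b,T]f}{q}=\Norm{[a,T]f}{q}\leq\Norm{aTf}{q}+\Norm{T(af)}{q}\lesssim\Norm{a}{r}\Norm{f}{p}.
\end{equation*}

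\textbf{Necessity} is where the uniform non-degeneracy of $T$ is used, via a ``twin cube'' lower bound: for every cube $Q\subset\R^d$ there is a cube $Q'$ with $\ell(Q')\sim\ell(Q)$ and $\dist(Q,Q')\sim\ell(Q)$ such that $T\mathbf{1}_{Q'}$ has a definite sign and size on $Q$. Testing $[b,T]$ against $\mathbf{1}_{Q'}$ yields a local bound
\begin{equation*}
  \frac{1}{|Q|}\int_Q |b-c_Q|\ud x\lesssim \Norm{[b,T]}{L^p\to L^q}|Q|^{1/p-1/q},
\end{equation*}
with $c_Q$ an average of $b$ over $Q'$. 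In the three regimes with $p\leq q$ this yields the required conclusions directly: $b\in\BMO$ when $p=q$; H\"older continuity when $p<q\leq p^*$; and, when $q>p^*$ so that the scaling exponent exceeds $1/d$, a Campanato-type argument forcing $b$ to be constant.

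The main obstacle is necessity in the case $p>q$. Here the local oscillation bound degenerates on small cubes, since the exponent $1/p-1/q$ is negative, so purely local arguments are inadequate. One must instead produce a single \emph{global} constant $c$ and prove $b-c\in L^r(\R^d)$, a statement about $b$ at infinity rather than on each scale. The plan is to use a Whitney-type partition of $\R^d$, apply the twin cube bound so that the local averages $c_Q$ telescope across adjacent scales, and define $c$ as the limit $\lim_{n\to\infty}\ave{b}_{Q_n}$ along an expanding sequence of cubes. The exponent $r=(\tfrac1q-\tfrac1p)^{-1}$ matches the scaling of the twin cube estimate, so one expects the local defects $b-c_Q$ to assemble into a global bound $\Norm{b-c}{r}\lesssim\Norm{[b,T]}{L^p\to L^q}$. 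Making this assembly rigorous---verifying that the limit exists and that summing the scaled local estimates gives the sharp global $L^r$ control---is the crux of the new contribution.
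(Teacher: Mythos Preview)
Your sufficiency section is fine and matches the paper's treatment. The necessity section has two genuine gaps.

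\textbf{The twin cube with ``definite sign'' is not available in general.} Under the paper's minimal non-degeneracy hypothesis, especially for rough homogeneous kernels $K(x)=\Omega(x/|x|)/|x|^d$ with merely $\Omega\in L^1(S^{d-1})\setminus\{0\}$, one cannot arrange that $T\mathbf 1_{Q'}$ has a definite sign on $Q$: Proposition~\ref{prop:unifNonDegImplies} only gives \emph{integral} closeness of $K(x,y)$ to $K(x_0,y_0)$, not pointwise. Moreover, any sign-based argument (median or otherwise) is inherently restricted to real-valued $b$, whereas Theorem~\ref{thm:summary} is stated for complex $b$. The paper circumvents both issues via an \emph{approximate weak factorisation} (Lemmas~\ref{lem:dec1}--\ref{lem:dec2}): every $f\in L^\infty_0(B)$ is written as $\sum_{i=1}^2(g_iTh_i-h_iT^*g_i)+\tilde{\tilde f}$ with a small error $\tilde{\tilde f}$ that is absorbed. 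This uses only the size of $|T^*g|$ on $B$, never its sign, and works for complex $b$ and $K$.

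\textbf{The $p>q$ plan does not close.} Your local oscillation bound $\fint_Q|b-c_Q|\lesssim\Theta|Q|^{-1/r}$ is correct, but applying it cube-by-cube over a Whitney (or sparse) family and then summing cannot yield global $L^r$ control: at any point the containing cubes $Q_j$ have $|Q_j|\to 0$, so $\sum_j|Q_j|^{-1/r}$ diverges. The telescoping of constants $c_Q$ does not help, because the defect on each cube is only controlled in $L^1$, not in $L^r$. The paper's argument is structurally different: it dualises, fixing $f\in L^\infty_0(Q_0)$ with $\|f\|_{r'}\le 1$, and seeks $|\int bf|\lesssim\Theta$. The function $f$ is decomposed via a Calder\'on--Zygmund stopping (Lemma~\ref{lem:dec3}) into pieces $f_{n,k}\in L^\infty_0(Q_{n,k})$, each of which is approximately weak-factorised. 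The crucial step is that the resulting commutator terms are not estimated individually but fed into a \emph{summed} hypothesis
\[
  \sum_j|\langle[b,T]h_j,g_j\rangle|\le\Theta\Big\|\sum_j\|h_j\|_\infty\mathbf 1_{Q_j}\Big\|_p\Big\|\sum_j\|g_j\|_\infty\mathbf 1_{\tilde Q_j}\Big\|_{q'},
\]
which follows from the operator bound by randomisation. The right-hand norms are then controlled by $\|Mf\|_{r'}$ via the geometric decay of the stopping averages. Without this joint estimate---treating all cubes simultaneously inside a single $L^p\times L^{q'}$ pairing---the argument does not sum.
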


To be explicit, the definition of the Sobolev exponent $p^*$ above is $pd/(d-p)$, if $p<d$, and $\infty$ otherwise;
thus $p<q\leq p^*$ is precisely the condition that the H\"older exponent satisfies $\alpha\in(0,1]$. We say that a Calder\'on--Zygmund operator $Tf(x)=\int K(x,y)f(y)\ud y$, with usual (or weaker) assumptions on the kernel $K$ recalled in Section \ref{ss:non-deg}, 
is ``non-degenerate'' provided that, for some $c_0>0$,
\begin{equation}\label{eq:nondeg}
   \text{for every }y\in\R^d\text{ and }r>0,\text{ there is }x\in B(y,r)^c\text{ with }\abs{K(x,y)}\geq\frac{1}{c_0 r^d};
\end{equation}
i.e., uniformly over all positions and length-scales, the kernel takes some values that are as big as they are allowed to be by the standard upper bound for $K(x,y)$. When $\displaystyle K(x,y)=\frac{\Omega(x-y)}{\abs{x-y}^d}$ is a (possibly rough) homogeneous kernel, this requirement simply says that $\Omega$ is not identically zero.

\subsection{Sufficient conditions for boundedness}\label{ss:intro-suff}
We note that all the ``if'' parts of Theorem \ref{thm:summary} are either well known or easy. The cases when $b$ is constant are completely trivial, since in this case the commutator vanishes. 
If $b\in L^r(\R^d)$ with $\frac1r=\frac1q-\frac1p$, the boundedness is also immediate simply from the boundedness of $T$ on both $L^p(\R^d)$ and $L^q(\R^d)$ (taking this as part of the definition of a ``Calder\'on--Zygmund operator''), together with H\"older's inequality:
\begin{equation*}
\begin{split}
  \Norm{[b,T]f}{q}
  \leq\Norm{bTf}{q}+\Norm{T(bf)}{q}
  &\leq\Norm{b}{r}\Norm{Tf}{p}+\Norm{T}{L^q\to L^q}\Norm{bf}{q} \\
  &\leq\Norm{b}{r}\big(\Norm{T}{L^p\to L^p}+\Norm{T}{L^q\to L^q}\big)\Norm{f}{p}.
\end{split}
\end{equation*}
In particular, no mutual cancellation between the two terms of the commutator is involved in this estimate.
This computation is also valid when $p=q$ and $r=\infty$, showing the trivial sufficiency of $b\in L^\infty(\R^d)$ for the boundedness of $[b,T]$ on $L^p(\R^d)$. The fact that the larger space $\BMO(\R^d)$ is still admissible for this boundedness is a celebrated theorem of Coifman, Rochberg and Weiss \cite{CRW} and the only truly nontrivial result among the ``if'' statements of Theorem \ref{thm:summary}.

If $b$ is $\alpha$-H\"older continuous, using only the standard pointwise bound for Calder\'on--Zygmund kernels, we see that
\begin{equation*}
  \abs{[b,T]f(x)}=\Babs{\int_{\R^d}(b(x)-b(y))K(x,y)f(y)\ud y}
  \lesssim\int_{\R^d}\abs{x-y}^\alpha\frac{1}{\abs{x-y}^d}\abs{f(y)}\ud y
\end{equation*}
is pointwise dominated by the usual fractional integral operator, whose $L^p(\R^d)$-to-$L^q(\R^d)$ bounds are classical and well known.

\subsection{Necessary conditions for boundedness}\label{ss:intro-nec}
Let us then discuss the ``only if'' parts of Theorem \ref{thm:summary}. For $p=q$, already Coifman, Rochberg and Weiss \cite{CRW} proved the necessity of $b\in\BMO(\R^d)$ for the $L^p(\R^d)$-boundedness of $[b,T]$ for {\em all} $d$ Riesz transforms $R_j$, $j=1,\ldots,d$. (This reduces to just the Hilbert transform when $d=1$.) Their argument made explicit use of the special algebraic form of the relevant kernels.

Janson \cite{Janson:1978} and Uchiyama \cite{Uchiyama:1978}, independently, extended the necessity part of the Coifman--Rochberg--Weiss theorem to more general classes of homogeneous Calder\'on--Zygmund kernels with ``sufficient'' smoothness. In particular, their results contain the fact that the boundedness of $[b,R_j]$ for just {\em one} (instead of {\em all}) $j=1,\ldots,d$ already implies that $b\in\BMO(\R^d)$.
Janson's argument may be viewed as an analytic extension of that of Coifman et al., in that he used the smoothness to guarantee absolute convergence of the Fourier expansion of the inverse $1/K$ of the kernel, where the individual frequency components could then be treated by the algebraic method. Janson also proves the ``only if'' part of Theorem \ref{thm:summary} for $p<q$ (and in fact for more general Orlicz norms) for the same class of smooth homogeneous kernels. Uchiyama's argument is different, but still dependent on both smoothness and homogeneity of the kernel.

A recent advance was made by Lerner, Ombrosi and Rivera-R{\'{\i}}os \cite{LORR}, who identified sufficient {\em local positivity} (lack of sign change in a nonempty open set) as a workable replacement of the previous smoothness assumptions on the (still homogeneous) kernel to deduce the necessity of $b\in\BMO(\R^d)$ for the $L^p(\R^d)$-boundedness of $[b,T]$. 
Similar results in the case of not necessarily homogeneous Calder\'on--Zygmund kernels were subsequently obtained by Guo, Lian and Wu \cite{GLW}; see also Duong, Li, Li and Wick \cite{DLLW} for the concrete case when $T$ is a Riesz transform related to the sub-Laplacian on a stratified nilpotent Lie group.

In the present work, we take the final step in generalising the class of admissible kernels, showing that any non-degenerate Calder\'on--Zygmund kernel is admissible for the ``only if'' conclusions of 
Theorem \ref{thm:summary}. In particular, our result applies to both two-variable kernels $K(x,y)$ (with very little smoothness) and rough homogeneous kernels $\displaystyle \frac{\Omega(x-y)}{\abs{x-y}^d}$, under a minimal non-degeneracy assumption. In the case of homogeneous kernels we merely need that $\Omega\in L^1(S^{d-1})$ does not vanish identically. This answers positively a question raised by Lerner et al. \cite[Remark 4.1]{LORR}; as discussed below, we also address the more general two-weight bounds and higher commutators as considered in \cite{LORR}. Also in the case of two-variable kernels, our non-degeneracy hypothesis seems to be at least as general as anything found in the literature; in contrast to \cite{GLW} in particular, we allow in \eqref{eq:nondeg} that the point of non-degeneracy $x$ may lie in any direction from the reference point~$y$.

\subsection{The case $p>q$ and applications to the Jacobian operator}\label{ss:intro-Jacobi}
The case $p>q$ of Theorem \ref{thm:summary} is completely new even for special Calder\'on--Zygmund operators like the Riesz transforms, for which the complementary range $p\leq q$ was understood for a long time. The result in this new range is perhaps surprising, in that it says that there is essentially no cancellation between $bT$ and $Tb$ in this regime. (An initial working hypothesis before discovering this result was that the role of BMO in the commutator boundedness in this regime of exponents could be taken by another space $JN_r$, which was implicitly introduced by John and Nirenberg \cite[\S 3]{JN:61} and recently studied in \cite{DHKY}. However, the obtained result disproves this hypothesis.)

Technically, this is the hardest case of the proof, which is somewhat explained by the fact that membership in $L^r(\R^d)$ is a ``global'' condition, in contrast to the ``uniform local'' conditions defining both $\BMO(\R^d)$ and $\alpha$-H\"older continuous functions. Incidentally, a similar dichotomy between ``global'' conditions characterising $L^p$-to-$L^q$ (or similar) boundedness for $p>q$, and ``uniform local'' conditions in the case $p\leq q$, has also been recently discovered in a couple of other settings as well:
\begin{enumerate}
  \item In the context of two-weight norm inequalities for certain discrete positive operators, the characterisation for $p\leq q$ by Lacey, Sawyer and Uriarte-Tuero \cite{LSU:positive} is in terms of local ``testing conditions'' uniform over all dyadic cubes, while the characterisation for $p> q$ due to Tanaka \cite{Tanaka:Wolff} involves the $L^r$ membership of a ``discrete Wolff potential''; see also \cite{HHL} for a unified approach to both cases. 
  \item The boundedness of certain Toeplitz type operators between the holomorphic Hardy spaces $H^p$ and $H^q$ of the unit ball was characterised by Pau and Per\"al\"a \cite{PauP} in both regimes of the exponents, in terms of a uniform local Carleson measure condition for $p\leq q$, and in terms of the global $L^r$ membership of a certain auxiliary function for $p>q$. These results, analogous to our present ones but in a different context, were found independently at almost the same time: the first arXiv versions of \cite{PauP} and the present work came out within two weeks of each other.
\end{enumerate}
It might be of interest for general operator theory in $L^p$ spaces to find further examples of, and/or a broader context for, this phenomenon.

A part of the motivation to study this regime of exponents for commutator inequalities came from a recent observation of Lindberg \cite{Lindberg:2017} about the connections of such bounds, in the particular case when $T$ is the Ahlfors--Beurling transform, to the Jacobian equation
\begin{equation*}
  Ju:=\det\nabla u:=\det(\partial_i u_j)_{i,j=1}^d= f\in L^p(\R^d).
\end{equation*}
It has been conjectured by Iwaniec \cite{Iwaniec:Escorial} that, for $p\in(1,\infty)$, the (obviously bounded) map $J:\dot W^{1,pd}(\R^d)^d\to L^p(\R^d)$, where $\dot W^{1,pd}$ is the homogeneous Sobolev space,  has a continuous right inverse and in particular is surjective. As a variant of our estimates for commutators, we will provide partial positive evidence by showing that the closed linear span of the range of $J$ is all of $L^p(\R^d)$. This is an $L^p$-analogue of a result of Coifman, Lions, Meyer and Semmes \cite[p.~258]{CLMS} who obtained a similar conclusion for $J:\dot W^{1,d}(\R^d)^d\to H^1(\R^d)$, which corresponds to the case $p=1$, with the usual replacement of $L^1$ by the Hardy space $H^1$. 

Recently, Lindberg \cite[p. 739]{Lindberg:2017} proposed an approach to the planar ($d=2$) case of the Jacobian operator via the complex-variable framework
\begin{equation*}
  Ju=\abs{\partial h}^2-\abs{\bar\partial h}^2
    =\abs{S(\bar\partial h)}^2-\abs{\bar\partial h}^2,
  \end{equation*}
where $h=u_1+iu_2$, $\partial=\frac12(\partial_1-i\partial_2)$, $\bar\partial=\frac12(\partial_1+i\partial_2)$,
and $S$ is the Ahlfors--Beurling operator. This led him to a question about the boundedness of the commutator $[b,S]:L^{2p}\to L^{(2p)'}$, which is solved as a particular case of Theorem \ref{thm:summary}; observe that $2p>2>(2p)'$ here. Following Lindberg's outline \cite[p. 739]{Lindberg:2017}, conclusions about the planar Jacobian could then be obtained as corollaries to Theorem \ref{thm:summary}; but it turns out that a combination of some elements of its proof, together with the techniques of Coifman, Lions, Meyer and Semmes \cite{CLMS}, actually allows to prove such results in any dimension; see Section \ref{sec:Jacobi}.

\subsection{A priori assumptions on $b$, $T$ and $[b,T]$}
In general it takes some effort to define precisely what is meant by ``$Tf$'', when $T$ is a singular integral operator, or by saying that such an operator ``is bounded'' from one space to another.
In our approach to the ``only if'' statements of Theorem \ref{thm:summary}, we avoid all this subtlety; in fact, our assumptions may be formulated entirely in terms of the kernel $K$ without ever having to define the operator $T$ or $[b,T]$, although we still use these symbols as convenient abbreviations. All we need is estimates for the bilinear form
\begin{equation}\label{eq:bilin}
  \pair{[b,T]f}{g}=\iint\big(b(x)-b(y))K(x,y)f(y)g(x)\ud y\ud x,
\end{equation}
where the functions $f,g\in L^\infty(\R^d)$ have bounded supports separated by a positive distance; we refer to such estimates as {\em off-support} bounds for $[b,T]$.
Under the standard estimates for a Calder\'on--Zygmund kernel, the above integral exists as an absolutely convergent Lebesgue integral when $b\in L^1_{\loc}(\R^d)$, as in Theorem \ref{thm:summary}.

For $p\leq q$, we only need the bound
\begin{equation}\label{eq:restrOffSuppWeakType}
\begin{split}
  \abs{\pair{[b,T]f}{g}} &\leq  C\Norm{f}{\infty}\Norm{g}{\infty}\abs{B}^{1/p}\abs{\tilde B}^{1/p'},\quad\text{whenever}\\
  &\quad \supp f\subset B,\quad \supp g\subset \tilde B,\quad r_B=r_{\tilde B}\leq c\dist(B,\tilde B),
\end{split}
\end{equation}
where $B$ and $\tilde B$ denote arbitrary balls of radius $r_B$ and $r_{\tilde B}$, respectively.
This is weaker than a {\em restricted weak type $(p,q)$ estimate} in two ways: the bound involves the bigger quantities $\abs{B}$ in place of $\abs{\tilde B}$ on the right, and it is only required to hold under the quantitative off-support condition above. (A certain technical strengthening, but still formally weaker than the global boundedness of $[b,T]:L^p\to L^q$, and involving off-support bounds only, is needed when $p>q$.)

We note that Liaw and Treil \cite{LiawTreil} have provided a framework to interpret the boundedness of a singular integral operator (an issue that we have chosen to avoid) via off-support conditions of a similar flavour. However, the off-support conditions that we impose on $f$ and $g$ are significantly stronger (and hence the resulting estimate on the operator restricted to such pairs of functions much weaker) than those of \cite{LiawTreil}; in particular, the quantitative separation of supports in \eqref{eq:restrOffSuppWeakType} efficiently prevents approximating a form with arbitrary $f,g$ (as done in \cite{LiawTreil}) by the off-support forms above.

The fact that one only needs off-support estimates in the ``only if'' directions of Theorem \ref{thm:summary} is already implicit in the argument of Uchiyama \cite[proof of Theorem~1]{Uchiyama:1978}, but not in all recent works, and it seems not to have been explicitly stated in the literature. On the other hand, Lerner et al. \cite{LORR} use a {\em restricted strong type} assumption, while Guo et al. \cite{GLW} state one of their results under a weak type hypothesis. Our condition \eqref{eq:restrOffSuppWeakType} simultaneously relaxes both these assumptions.

Note that the a priori assumption that $b\in L^1_{\loc}(\R^d)$ is essentially the weakest possible to make sense of the commutator $[b,T]$, even in the off-support sense as above. While many earlier results related to Theorem \ref{thm:summary} are obtained under this same minimal assumption, some others assume $b\in\BMO(\R^d)$ {\em qualitatively} to begin with, and then prove the {\em quantitative} bound $\Norm{b}{\BMO}\lesssim\Norm{[b,T]}{L^p\to L^p}$; see e.g. \cite[Theorem 1.2]{DLLW}. A simplification brought by this stronger a priori assumption is that one can absorb error terms of the form $\eps\Norm{b}{\BMO}$ in the argument. We will also use absorption, but only to quantities whose finiteness is guaranteed by $b\in L^1_{\loc}(\R^d)$. 

\subsection{Methods and scope}
We will prove versions of Theorem \ref{thm:summary} by two methods of somewhat different scopes. The first method is based on the well-known connection of commutator estimates to {\em weak factorisation}, which has been widely used since the pioneering work \cite{CRW}. (In contrast to proper factorisation, where an object is expressed as a product of other objects, weak factorisation refers to decompositions in terms of sums, or possibly infinite series, of products.) This depends on the basic identity
\begin{equation*}
  \pair{[b,T]f}{g}=\pair{b}{gTf-fT^*g},
\end{equation*}
where each term is well-defined as a Lebesgue integral for disjointly supported $f$ and $g$. Hence, if an arbitrary $h$ in (a dense subspace of) a predual of the space hoped to contain $b$ can be expanded as
\begin{equation}\label{eq:weakFact}
  h=\sum_i (g_i Tf_i-f_i T^*g_i),
\end{equation}
then we can hope to estimate
\begin{equation*}
  \abs{\pair{b}{h}}
  \leq\sum_i\abs{\pair{b}{g_i Tf_i-f_i T^*g_i}}
  =\sum_i\abs{\pair{[b,T]f_i}{g_i}}
\end{equation*}
in order to bound $\Norm{b}{}$ in terms of $\Norm{[b,T]}{}$. An inherent difficulty is that, even with good convergence properties of the expansion \eqref{eq:weakFact} in the predual space, lacking the a priori knowledge that $b$ should be in the relevant space, it may be difficult to justify the ``$\leq$'' above. We circumvent this problem by replacing \eqref{eq:weakFact} by an {\em approximate weak factorisation}, where the sum over $i$ is finite, but there is an additional error term $\tilde h$ that will be eventually absorbed.

This method is strong enough for proving Theorem \ref{thm:summary} as stated, where both the function $b$ and the kernel $K(x,y)$ of $T$ are allowed to be {\em complex-valued}. Besides completeness of the theory, achieving this level of generality was initially motivated by the applications to the Jacobian operator via the Ahlfors--Beurling transform, as discussed above. The kernel of this operator, $K(z,w)=-\pi^{-1}/(z-w)^2$ for $z,w\in\C$, is genuinely complex-valued, and it is only natural to view it as acting on (and forming commutators with) complex-valued functions. While this is hardly exotic, it should be stressed that some of the recent contributions, like our second method, are inherently restricted to real-valued~$b$.

Our second approach could be called the {\em median method}, and it is a close cousin of the recent work \cite{LORR}. It makes explicit use of the order structure of the real line as the range of the function $b$. The advantage of this method is that, with little additional effort, it can also handle the higher order commutators
\begin{equation*}
  T_b^k=[b,T_b^{k-1}],\qquad T_b^1=[b,T].
\end{equation*}
As before, we only need the off-support bilinear form
\begin{equation*}
  \pair{T_b^k f}{g}=\iint (b(x)-b(y))^k K(x,y)f(y)g(x)\ud u\ud x
\end{equation*}
of these operators for $f,g\in L^\infty(\R^d)$ with bounded supports separated by a positive distance, and $b\in L^k_{\loc}(\R^d)$ is a sufficient a priori assumption to make sense of this.
We also apply this method to two-weight commutator inequalities in Section \ref{ss:two-weight}.

\subsection{Extensions to other settings}

While the present work concentrates on commutators $[b,T]$ (and their iterates of the form $[b,[b,T]]$ etc.)~of pointwise multipliers and linear Calder\'on--Zygmund operators between $L^p$ spaces, we record a number of extensions of either the operators or the spaces under consideration:
\begin{enumerate}
  \item Bilinear Calder\'on--Zygmund operators map $T:L^{p_1}\times L^{p_2}\to L^p$ with $\frac1p=\frac{1}{p_1}+\frac{1}{p_2}$,
and one can ask about conditions for
\begin{equation*}
  [b,T]_1:(f,g)\mapsto bT(f,g)-T(bf,g):L^{p_1}\times L^{p_2}\to L^q.
\end{equation*}
The necessity of $b\in\BMO$ when $p=q$ was first obtained by Chaffee~\cite{Chaffee:16} under Janson-type assumptions and methods involving the Fourier expansion of the inverse kernel $1/K$. Since the circulation of our results, Oikari~\cite{Oikari:awf} has extended the present hypotheses and methods to bilinear operators, obtaining a close analogue of Theorem \ref{thm:summary} in this setting.

  \item Iterated commutators of the form $[[b,T_1],T_2]$, where $b$ is a function of two variable $x_1,x_2$, and each $T_i$ is a Calder\'on--Zygmund operator acting in the variable $x_i$, play an important role in the theory of singular integrals and function spaces on product domains. A characterisation of the $L^p\to L^p$ boundedness of these bi-commutators remains open, after the recent discovery of a gap~\cite{Lacey:error} in the celebrated Ferguson--Lacey theorem \cite{FL:Acta} and its extensions; nevertheless, various mixed-norm $L^{p_1}(L^{p_2})\to L^{q_1}(L^{q_2})$ bounds with $(p_1,p_2)\neq(q_1,q_2)$ have been recently characterised by Airta et al. \cite{AHLMO}, by extending the methods of the present paper.
  
  \item The necessity of $b\in\BMO(\R^d)$ for the boundedness of commutators of both linear and bilinear singular integrals between more general Banach functions spaces (in place of $L^p$ spaces) has been obtained  by Chaffee and Cruz-Uribe \cite{ChaffeeCruz}, again under Janson-type assumptions and approach. It might be of interest to revisit their results with our (more general) conditions and methods. Among other examples, Chaffee and Cruz-Uribe also consider weighted bounds, in which case our Theorem \ref{thm:Bloom2} is a significant generalisation of their \cite[Corollary 2.3]{ChaffeeCruz}. It seems plausible that similar extensions would be available for other results of \cite{ChaffeeCruz} as well.
\end{enumerate}

\subsection{About notation}\label{ss:notation}
We will make extensive use of the notation ``$\lesssim$'' to indicate an inequality up to an unspecified multiplicative constant. Such constants are always allowed to depend on the underlying dimension $d$, any of the Lebesgue space exponents $p,q,r,\ldots$, and also on the Calder\'on--Zygmund operator $T$ and its kernel $K$, as well as on the order $k$ of an iterated commutator; these are regarded as fixed throughout the argument. The implied constants may never depend on any of the functions under consideration (neither on the function $b$ appearing in the commutator $[b,T]$ itself nor on any of the functions $f,g,\ldots$ on which the commutator acts), nor points or subsets (balls, cubes, etc.) of their domain $\R^d$. Many arguments involve an auxiliary (large) parameter $A$, and dependence on it is also indicated explicitly until a suitable value of $A$ (depending only on the admissible quantities) is fixed once and for all for the rest of the argument.

The subscript zero of a Lebesgue space indicates vanishing integral, i.e., $L^p_0(Q)=\{f\in L^p(Q):\int f=0\}$. The subscript zero of a Sobolev space $W^{1,p}_0(\Omega)$ (which will be only mentioned in passing) indicates vanishing boundary values in the Sobolev sense. Compact support is indicated by the subscript $c$, mainly in the context of the test function space $C_c^\infty(\R^d)$. We denote by $\fint_E f:=\abs{E}^{-1}\int_E f$ the average of a function over a set $E$ of finite positive measure.

\section{Complex commutators and approximate weak factorisation}\label{sec:complex}

In this section we prove the ``only if'' claims of Theorem \ref{thm:summary}.

\subsection{Non-degenerate Calder\'on--Zygmund kernels}\label{ss:non-deg}

We begin by describing the precise class of singular integral kernels that we study.
We consider two-variable Calder\'on--Zygmund kernels under the standard conditions
\begin{equation*}
  K(x,y)\leq\frac{c_K}{\abs{x-y}^d}\qquad\forall x\neq y,
\end{equation*}
\begin{equation*}
  \abs{K(x,y)-K(x',y)}+\abs{K(y,x)-K(y,x')}
  \leq \frac{1}{\abs{x-y}^d}\omega\Big(\frac{\abs{x-x'}}{\abs{x-y}}\Big),
\end{equation*}
whenever $\abs{x-x'}<\frac12\abs{x-y}$, where the modulus of continuity $\omega:[0,1)\to[0,\infty)$ is increasing.
We refer to such a kernel as an {\em $\omega$-Calder\'on--Zygmund kernel}.
A common assumption is that $\omega(t)=c_\alpha t^\alpha$ for some $\alpha\in(0,1]$, or a more general Dini-condition $\int_0^1\omega(t)\frac{\ud t}{t}<\infty$, but we need even significantly less, namely that $\omega(t)\to 0$ as $t\to 0$.

We also consider {\em rough homogeneous kernels}
\begin{equation*}
  K(x,y)=K(x-y)=\frac{\Omega(x-y)}{\abs{x-y}^d},
\end{equation*}
where $\Omega\in L^1(S^{d-1})$ and $\Omega(tx)=\Omega(x)$ for all $t>0$ and $x\in\R^d$. We note that the off-support bilinear form \eqref{eq:bilin} is also well defined (absolutely integrable) for this type of kernels: the integrals of $y\mapsto\abs{K(x-y)f(y)}$ are uniformly bounded over $x\in\supp g$, and $x\mapsto\abs{b(x)g(x)}$ is integrable; the term involving $b(y)$ can be estimated similarly by carrying the iterated integrals in a different order.

In either case, the $L^p(\R^d)$-boundedness of an integral operator $T$ associated with $K$ neither follows from these assumptions, nor is assumed as a separate condition, as this is not needed. The story is different for the ``if'' directions of Theorem \ref{thm:summary}, but our present goal is to prove the ``only if'' directions with minimal assumptions.

\begin{definition}\label{def:ndCZK}
We say that $K$ is a non-degenerate Calder\'on--Zygmund kernel, if (at least) one of the following two conditions holds:
\begin{enumerate}
  \item\label{it:ndVar} $K$ is an $\omega$-Calder\'on--Zygmund kernel with $\omega(t)\to 0$ as $t\to 0$ and for every $y\in\R^d$ and $r>0$, there exists $x\in B(y,r)^c$ with
\begin{equation*}
  \abs{K(x,y)}\geq \frac{1}{c_0 r^{d}}.
\end{equation*}
  \item\label{it:ndRough} $K$ is a homogeneous Calder\'on--Zygmund kernel with $\Omega\in L^1(S^{d-1})\setminus\{0\}$. In particular, there exists a Lebesgue point $\theta_0\in S^{d-1}$ of $\Omega$ such that $$\Omega(\theta_0)\neq 0.$$
\end{enumerate}
\end{definition}

\begin{remark}[Comparison with non-degenerate kernels in the sense of Stein]\label{rem:ndCZK}
Suppose that $K$ is an $\omega$-Calder\'on--Zygmund kernel of the convolution form $K(x,y)=K(x-y)$. Then the non-degeneracy condition \eqref{it:ndVar} of Definition \ref{def:ndCZK} simplifies into the following form: for every $r>0$, we have
\begin{equation}\label{eq:ndOur}
  \abs{K(x)}\geq\frac{1}{c_0 r^d}\quad\text{for some }x\in B(0,r)^c.
\end{equation}
For convolution kernels, there is also the following well-known non-degeneracy condition introduced by Stein \cite[IV.4.6]{Stein:book}: there exists a constant $a>0$, and a unit vector $u_0$, so that
\begin{equation}\label{eq:ndStein}
  \abs{K(t\cdot u_0)}\geq a\cdot \abs{t}^{-d},\quad\text{for all }t\in\R\setminus\{0\}.
\end{equation}
It is immediate that Stein's non-degeneracy implies our version. In fact, assume \eqref{eq:ndStein} and fix some $c_1\geq 1$. Given $r>0$, we find that any $x=t\cdot u_0$, where $\abs{t}\in[r,c_1 r]$, satisfies\eqref{eq:ndOur} with $c_0=c_1^d/a$. Thus, while \eqref{eq:ndOur} requires just the existence of one $x$, Stein's condition provides two symmetric line segments of admissible $x$ that, moreover, have simple explicit dependence on $r$ and are always located on the same fixed ray through the origin. It is not surprising that \eqref{eq:ndOur} is easily satisfied even when \eqref{eq:ndStein} is not, and we provide some examples below.

Note that it is assumed in the discussion of non-degeneracy in \cite[IV.4.6]{Stein:book}, but not  in Definition \ref{def:ndCZK}, that $K$ should be the kernel of a bounded operator on $L^2(\R^d)$, and this would offer a source of cheap examples in terms of kernels of unbounded operators. To make clear that this is not a decisive difference between the two conditions, we take the slight additional trouble of making our examples correspond to bounded operators on $L^2(\R^d)$.
\end{remark}

Several of our examples to follow will exploit a standard resolution of unity
\begin{equation}\label{eq:reso}
  \sum_{j\in\Z}\varphi(2^{-j}x)\equiv 1\quad\forall x>0,\quad\varphi\in C_c^\infty(\frac12,2),
\end{equation}
where we note in particular that $\varphi(1)=1$ under these conditions.

\begin{example}[Stein's non-degeneracy violated at one or two points]\label{ex:1point}
When $d=1$, Stein's condition \eqref{eq:ndStein} simply says that $\abs{K(x)}\geq a\abs{x}^{-1}$, so any $K$ that vanishes even at one point of $\R\setminus\{0\}$ is not admissible. Let us fix some $K_0$ that does satisfy \eqref{eq:ndStein}, say the Hilbert kernel $K_0(x)=1/x$. We then define
\begin{equation*}
  K(x)=K_0(x)-K_0(1)\varphi(x).
\end{equation*}
It is immediate that this perturbation of $K_0$ neither destroys the Calder\'on--Zygmund kernel bounds nor the $L^2(\R)$-boundedness of the operator. But $K(1)=0$, so \eqref{eq:ndStein} is clearly violated. In contrast, \eqref{eq:ndOur} trivially holds; we can e.g. take $x=-r$ for any given $r>0$. If we also subtract $K_0(-1)\varphi(-x)$, so as to violate Stein's condition at both $x=\pm 1$, we still have \eqref{eq:ndOur}, where we can e.g. take $x=\pm r$ when $r\in(0,\frac12]\cup[2,\infty)$ and $x=\pm 2$ when $r\in(\frac12,2)$.
\end{example}

\begin{example}[Stein's non-degeneracy violated in a half-space, $d\geq 2$]\label{ex:halfspace}
Let $d\geq 2$ and consider a homogeneous convolution kernel
\begin{equation*}
  K(x)=\frac{x_i x_j}{\abs{x}^{d+2}}\cdot 1_{(0,\infty)}(x_j)
    =\frac{1}{\abs{x}^d} \Omega\big(\frac{x}{\abs{x}}\big),\quad\Omega(x)=x_i\cdot x_j\cdot 1_{(0,\infty)}(x_j);
\end{equation*}
this is the truncation of a second order Riesz transform to a half space. Since $s\mapsto s\cdot 1_{(0,\infty)}(s)$ is Lipschitz-continuous, $K(x-y)$ is an $\omega$-Calder\'on--Zygmund kernel with $\omega(t)=t$, and it also satisfies $\int_{S^{d-1}}\Omega(u)\ud\sigma(u)=0$. Under these conditions, it is classical that $K$ is the convolution kernel of a bounded operator on $L^2(\R^d)$ (see e.g. \cite[Proposition II.5.5]{GCRF}). For any unit vector $u_0$, it is clear that $K(t\cdot u_0)$ must vanish for either all $t\in(0,\infty)$ or all $t\in(-\infty,0)$, so that Stein's condition \eqref{eq:ndStein} is impossible. On the other hand, for any $r>0$, choosing $x=2^{-1/2}(e_i+e_j)r$, where $e_i,e_j$ are standard unit vectors, we have $x\in B(0,r)^c$ and $K(x)=(2^{-1/2}r)^2/r^{d+2}=2^{-1}r^{-d}$, so that $K$ satisfied Definition \ref{def:ndCZK}\eqref{it:ndVar}.
\end{example}

\begin{example}[Stein's non-degeneracy violated on a half-line, $d=1$]\label{ex:onesided}
In dimension $d=1$, it takes a bit more effort to construct an analogue of Example \ref{ex:halfspace}; but this pays off, as it allows us to connect the example to the theory of {\em one-sided singular integrals} introduced by Aimar, Forzani and Mart\'in-Reyes~\cite{AFMR}. These are simply convolution-type $\omega$-Calder\'on--Zygmund kernel $K$ supported on $(0,\infty)$. The basic example of a non-trivial one-sided kernel provided in \cite[(1.5)]{AFMR}, $K(x)=1_{(0,\infty)}(x)\cdot x^{-1}\cdot\sin(\log x)/\log x$, decays a bit too fast at $0$ and $\infty$ to satisfy Definition \ref{def:ndCZK}\eqref{it:ndVar}, but a non-degenerate example can be given as follows: With the resolution of unity \eqref{eq:reso}, let
\begin{equation*}
  K(x):=\sum_{j\in\Z}\varphi(2^{-j}x)\frac{(-1)^j}{x}.
\end{equation*}

It is immediate that this satisfies the higher order Calder\'on--Zygmund estimates $\abs{x}^n\abs{D^n k(x)}\leq c_n$ for all $n=0,1,2,\ldots$, and in particular the $\omega$-Calder\'on--Zygmund estimates with $\omega(t)=t$. Since consecutive bumps in the series of $K$ have equal integral with opposite signs, $K$ also satisfies the usual cancellation condition $\abs{\int_{\eps<\abs{x}<N}K(x)\ud x}\leq C$ for all $0<\eps<N<\infty$. However, it fails to satisfy the existence of the limit $\lim_{\eps\to 0}\int_{\eps<\abs{x}<1}K(x)\ud x$, which is needed to define the associated principal value convolution operator in the classical theory. But if we take the limit $\eps\to 0$ only along the powers $\eps=4^{-n}$, $n\in\Z$ (so as to proceed in steps of two consecutive bumps of opposite signs), then the relevant limit exists, and a trivial modification of the standard theory (see e.g. \cite[Proposition II.5.5]{GCRF}) shows that $Tf(x):=\lim_{n\to\infty}\int_{\abs{x-y}>4^{-n}}K(x-y)f(y)\ud y$ defines a bounded operator on $L^2(\R^d)$ with convolution kernel $K$. Finally, this $K$ easily satisfies Definition \ref{def:ndCZK}\eqref{it:ndVar}: Given $r>0$, let $r\leq 2^j<2r$ so that $x=2^j\in B(0,r)^c$ satisfies $\abs{K(x)}=\abs{(-1)^j x^{-1}}\geq(2r)^{-1}$.

Recall that Stein's non-degeneracy condition was introduced for the following result \cite[IV.4.6, Proposition 7]{Stein:book}: If a convolution operator with non-degenerate kernel in Stein's sense acts boundedly on a weighted space $L^p(w)$, then the weight $w$ must belong to Muckenhoupt's class $A_p$. On the other hand, Aimar et al. \cite{AFMR} show that their one-sided operators, and hence in particular the example that we just gave, act boundedly on $L^p(w)$ for a strictly larger weight class $A_p^-$. As we will show in this paper, non-degeneracy in the sense of Definition \ref{def:ndCZK}\eqref{it:ndVar} (which is satisfied by the said example) is enough to imply various necessary conditions on $b$ for the boundedness of the commutator $[b,T]$. In particular, a weaker notion of non-degeneracy of a singular integral $T$ is needed to deduce that $b\in\BMO$ from the $L^p$-boundedness of $[b,T]$, than what is needed to deduce that $w\in A_p$ from the $L^p(w)$-boundedness of $T$. This is perhaps unexpected in view of the many known connections between the two questions.
\end{example}

\begin{example}[Stein's non-degeneracy violated all over the place]\label{ex:ndCZK}
In the two previous examples, a variant of Stein condition would still be satisfied, if we only demanded \eqref{eq:ndStein} for $t\in(0,\infty)$. This final (arguably somewhat artificial) example shows that we can make \eqref{eq:ndStein} fail for a significantly larger set of $t\in\R$, while still retaining non-degeneracy in the sense of Definition \ref{def:ndCZK}\eqref{it:ndVar}.

Let $d\geq 2$ and $\varphi$ be as in \eqref{eq:reso}.
Let $(w_k)_{k\in\Z}$ be a sequence of unit vectors that is dense in the unit sphere $S^{d-1}$ of $\R^d$, and let $(v_j)_{j\in\Z}$ be a sequence that, for each $w_k$, contains arbitrarily long subsequences of constant value $w_k$. Fixing a resolution as in \eqref{eq:reso}, let finally
\begin{equation*}
  K(x):=\sum_{j\in\Z}\varphi(2^{-j}\abs{x})\frac{x\cdot v_j}{\abs{x}^{d+1}}.
\end{equation*}
It is immediate that $K$ satisfies not only the $\omega$-Calder\'on--Zygmund estimates with $\omega(t)=t$, but in fact the higher Calder\'on--Zygmund estimates $\abs{\partial^\alpha K(x)}\leq c_\alpha\abs{x}^{-d-\abs{\alpha}}$ of any order, and also that $K$ has vanishing integral over any sphere centred at the origin. It is well-known (see again \cite[Proposition II.5.5]{GCRF}) that, under these conditions, $K$ is the convolution kernel of a singular integral operator bounded on $L^2(\R^d)$.

To see that $K$ satisfies Definition \ref{def:ndCZK}\eqref{it:ndVar}, given $r>0$, let $r\leq 2^k<2r$, and $x:=2^k v_k\in B(0,r)^c$. Then
\begin{equation*}
  K(x)=\frac{2^k v_k\cdot v_k}{\abs{2^k v_k}^{d+1}}=2^{-kd}>(2r)^{-d}.
\end{equation*}
On the other hand, let us fix some candidate unit-vector $u_0$ and $a>0$ for Stein's condition \eqref{eq:ndStein}, and choose another unit vector $u_1\perp u_0$. By density, we can find some $w_k$ with $\abs{w_k-u_1}<\frac12 a$. Given $N>0$ (large), we can find $v_n,v_{n+1},\ldots,v_{n+N}\equiv w_k$. Then
\begin{equation*}
  K(x)\equiv\frac{x\cdot w_k}{\abs{x}^{d+1}}\quad\text{whenever }2^{n}\leq\abs{x}\leq 2^{n+N},
\end{equation*}
and in particular
\begin{equation*}
  \abs{K(t\cdot u_0)}=\frac{\abs{t u_0\cdot w_k}}{\abs{t u_0}^{d+1}}=\abs{t}^{-d}\abs{u_0\cdot(w_k-u_1)}\leq\frac12 a \abs{t}^{-d}\quad\text{when }
  \abs{t}\in[2^n,2^{n+N}].
\end{equation*}
So not only is \eqref{eq:ndStein} violated, but it is violated on symmetric line-segments that may be arbitrarily long relative to their distance from $0$. On the other hand, the points of non-degeneracy for Definition \ref{def:ndCZK}\eqref{it:ndVar}, $x=2^k v_k$ where $v_k$ are dense in $S^{d-1}$, have a rather wild distribution in the underlying space $\R^d$.
\end{example}

\subsection{Consequences of non-degeneracy}

We will use the assumption of non-degeneracy through the following result:

\begin{proposition}\label{prop:unifNonDegImplies}
Let $K$ be a non-degenerate Calder\'on--Zygmund kernel.
Then for every $A\geq 3$ and every ball $B=B(y_0,r)$, there is a disjoint ball $\tilde B=B(x_0,r)$ at distance $\dist(B,\tilde B)\eqsim Ar$ such that
\begin{equation}\label{eq:ndKcenter} 
    \abs{K(x_0,y_0)}\eqsim \frac{1}{A^d r^d},
\end{equation}
and  for all $y_1\in B$ and $x_1\in\tilde B$, we have
\begin{equation}\label{eq:ndKdiff}
  \int_B\abs{K(x_1,y)-K(x_0,y_0)}\ud y+\int_{\tilde B}\abs{K(x,y_1)-K(x_0,y_0)}\ud x\lesssim \frac{\eps_A}{A^d},
\end{equation}
where $\eps_A\to 0$ as $A\to\infty$.

The implied constants can depend at most on $c_K,\omega$ and $d$, as well as $c_0$ or $\abs{\Omega(\theta_0)}$ from Definition \ref{def:ndCZK}.
If $K$ is homogeneous, we can take $x_0=y_0+Ar\theta_0$.
\end{proposition}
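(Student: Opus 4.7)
Treat the two cases of Definition~\ref{def:ndCZK} separately. In each, the scheme is to first exhibit $x_0$ with $\abs{x_0 - y_0} \eqsim Ar$ realising \eqref{eq:ndKcenter}, set $\tilde B := B(x_0, r)$ (which is disjoint from $B$ with $\dist(B, \tilde B) = \abs{x_0 - y_0} - 2r \eqsim Ar$ since $A \geq 3$), and then bound the two integrals in \eqref{eq:ndKdiff}.

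\textbf{Case \ref{it:ndVar} ($\omega$-Calder\'on--Zygmund kernels).} Apply the non-degeneracy condition at scale $Ar$ to obtain $x_0 \in B(y_0, Ar)^c$ with $\abs{K(x_0, y_0)} \geq 1/(c_0 (Ar)^d)$. The standard pointwise upper bound $\abs{K(x,y)} \leq c_K/\abs{x-y}^d$ then forces $\abs{x_0 - y_0} \leq (c_K c_0)^{1/d} Ar$, so $\abs{x_0 - y_0} \eqsim Ar$ and \eqref{eq:ndKcenter} follows. For \eqref{eq:ndKdiff}, for $y \in B$ and $x_1 \in \tilde B$ with $A \geq 3$ we have both $\abs{x_1 - x_0} \leq r < \tfrac12 \abs{x_0 - y}$ and $\abs{y - y_0} \leq r < \tfrac12 \abs{x_0 - y_0}$, so the $\omega$-smoothness applies to each summand of
\[
\abs{K(x_1, y) - K(x_0, y_0)} \leq \abs{K(x_1, y) - K(x_0, y)} + \abs{K(x_0, y) - K(x_0, y_0)},
\]
bounding each by $\omega(c/A)/(Ar)^d$ pointwise. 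Integration over $B$ (of volume $\lesssim r^d$) yields $\lesssim \omega(c/A)/A^d$, so $\varepsilon_A := \omega(c/A) \to 0$ works; the $\tilde B$-integral is treated symmetrically by exchanging the roles of the two variables.

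\textbf{Case \ref{it:ndRough} (rough homogeneous kernels).} Take $x_0 := y_0 + Ar\theta_0$, so $K(x_0, y_0) = \Omega(\theta_0)/(Ar)^d$ yields \eqref{eq:ndKcenter}. Setting $\theta := (x_1 - y)/\abs{x_1 - y}$, split
\[
K(x_1, y) - K(x_0, y_0) = \frac{\Omega(\theta) - \Omega(\theta_0)}{\abs{x_1-y}^d} + \Omega(\theta_0)\Big(\frac{1}{\abs{x_1-y}^d} - \frac{1}{(Ar)^d}\Big).
\]
Since $\abs{x_1 - y} = Ar + O(r)$, the second summand is pointwise $O(1/(A^{d+1} r^d))$, contributing $\lesssim 1/A^{d+1}$ to the integral over $B$. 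For the first summand, I would pass to polar coordinates about $x_1$: $y = x_1 - \rho\theta$, $\ud y = \rho^{d-1}\,\ud\rho\,\ud\sigma(\theta)$. As $y$ ranges over $B$, $\rho \in [Ar-2r, Ar+2r]$ and $\theta$ lies in a spherical cap $C_A \subset S^{d-1}$ of angular radius $O(1/A)$ about $\theta_0$, with $\sigma(C_A) \eqsim A^{1-d}$. Hence
\[
\int_B \frac{\abs{\Omega(\theta) - \Omega(\theta_0)}}{\abs{x_1-y}^d}\,\ud y \leq \int_{Ar-2r}^{Ar+2r} \frac{\ud\rho}{\rho} \int_{C_A} \abs{\Omega - \Omega(\theta_0)}\,\ud\sigma \lesssim \frac{1}{A}\,\sigma(C_A)\,\varepsilon_A',
\]
where $\varepsilon_A' := \sigma(C_A)^{-1} \int_{C_A} \abs{\Omega - \Omega(\theta_0)}\,\ud\sigma \to 0$ by the Lebesgue-point property of $\theta_0$. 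This gives $\lesssim \varepsilon_A'/A^d$; taking $\varepsilon_A := \max(\varepsilon_A', C/A)$ absorbs both contributions. The $\tilde B$-integral is handled analogously by polar coordinates centered at $y_1$.

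\textbf{Main obstacle.} Case \ref{it:ndRough} is the delicate one: since $\Omega$ lies merely in $L^1(S^{d-1})$, no pointwise control of $\Omega - \Omega(\theta_0)$ is available, and the entire estimate hinges on converting the Lebesgue integral over $B$ into a surface integral over a shrinking cap, where Lebesgue differentiation can be applied. The bookkeeping of the Jacobian $\rho^{d-1}$ against $\rho^{-d}$ must match the smallness of $\sigma(C_A)$ exactly so as to produce $A^{-d}$; and one must verify that the cap $C_A$ can be chosen uniformly in $x_1 \in \tilde B$ (i.e., stays inside a fixed cap of angular radius $O(1/A)$ about $\theta_0$, independent of $x_1$), so that a single average $\varepsilon_A'$ governs the estimate.
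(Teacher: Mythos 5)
Your proof is correct. Case \eqref{it:ndVar} is essentially identical to the paper's argument (same choice of $x_0$ via the non-degeneracy hypothesis at scale $Ar$, same two-step triangle inequality using the $\omega$-smoothness in each variable separately, yielding a pointwise bound $\eps_A/(Ar)^d$ that is then integrated). In case \eqref{it:ndRough} you use the same decomposition into a ``difference of $\Omega$'' term and a ``difference of denominators'' term, but you execute the key Lebesgue-point step differently: you pass to polar coordinates about $x_1$ and reduce to a surface average of $\abs{\Omega-\Omega(\theta_0)}$ over a spherical cap of angular radius $O(1/A)$, with the bookkeeping $\int\rho^{-1}\ud\rho\lesssim 1/A$ against $\sigma(C_A)\eqsim A^{1-d}$, whereas the paper substitutes $z=u-v$ and reduces to a \emph{solid} average $\fint_{B(0,2/A)}\abs{\Omega(\theta_0+s)-\Omega(\theta_0)}\ud s$ of the $0$-homogeneous extension over a shrinking ball in $\R^d$. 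These are equivalent (by polar coordinates again), and your version ties more directly to the hypothesis that $\theta_0$ is a Lebesgue point of $\Omega$ on the sphere; the paper's version avoids the cap geometry entirely. The uniformity issue you flag at the end is real but resolves immediately: since $x_1-y=Ar\theta_0+O(r)$ uniformly in $x_1\in\tilde B$, $y\in B$, and the direction set is scale-invariant, $C_A$ is always contained in the \emph{fixed} cap $\hat C_A$ of angular radius $C/A$ about $\theta_0$, so you may simply enlarge the integration domain to $\hat C_A$ (using $\sigma(\hat C_A)\eqsim A^{1-d}$) and take $\eps_A'$ to be the average over $\hat C_A$, which tends to $0$ by the Lebesgue-point property and depends only on $A$.
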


\begin{proof}[Proof of Proposition \ref{prop:unifNonDegImplies}, case \eqref{it:ndVar}]
We assume that $K$ is as in Definition \ref{def:ndCZK}\eqref{it:ndVar}.
Fix a ball $B=B(y_0,r)$ and $A\geq 3$. We apply the assumption with $y_0$ in place of $y$ and $Ar$ in place of $r$. This produces a point $x_0\in B(y_0,Ar)^c$ such that
\begin{equation*}
  \frac{1}{c_0 (Ar)^d}\leq\abs{K(x_0,y_0)}\leq\frac{c_K}{\abs{x_0-y_0}^d}.
\end{equation*}
Let $\tilde B:=B(x_0,r)$. Then
\begin{equation*} 
  Ar\leq\abs{x_0-y_0}\leq (c_0 c_K)^{1/d}Ar,\quad\dist(B,\tilde B)\eqsim\abs{x_0-y_0}.
\end{equation*}
Moreover, if $x\in \tilde B$ and $y\in B$, then
\begin{equation*}
\begin{split}
  \abs{K(x,y)-K(x_0,y_0)}
  &\leq \abs{K(x,y)-K(x,y_0)}+\abs{K(x,y_0)-K(x_0,y_0)} \\
  &\leq \frac{1}{\abs{x-y_0}^d}\omega\Big(\frac{\abs{y-y_0}}{\abs{x-y_0}}\Big)
  +\frac{1}{\abs{x_0-y_0}^d}\omega\Big(\frac{\abs{x-y_0}}{\abs{x_0-y_0}}\Big) \\
  &\leq\frac{1}{(Ar-r)^d}\omega\Big(\frac{r}{Ar-r}\Big)+\frac{1}{(Ar)^d}\omega\Big(\frac{r}{Ar}\Big) \\
  &=\frac{1}{(Ar)^d}\Big[\frac{1}{(1-A^{-1})^d}\omega\Big(\frac{1}{A-1}\Big)+\omega\Big(\frac{1}{A}\Big)\Big] 
  =\frac{\eps_A}{(Ar)^d},
\end{split}
\end{equation*}
where $\eps_A\to 0$ as $A\to\infty$ by the condition that $\omega(t)\to 0$ as $t\to 0$. Integrating this over $x\in\tilde B$ or $y\in B$, which both have measure $\abs{\tilde B}=\abs{B}\eqsim r^d$, we obtain \eqref{eq:ndKdiff}.
\end{proof}

\begin{proof}[Proof of Proposition \ref{prop:unifNonDegImplies}, case \eqref{it:ndRough}]
We assume that $K$ is as in Definition \ref{def:ndCZK}\eqref{it:ndRough}. Fix a ball $B=B(y_0,r)$ and $A\geq 3$. Let $x_0=y_0+Ar\theta_0$ and $\tilde B=B(x_0,r)$. Clearly $\dist(\tilde B,B)=(A-2)r\eqsim Ar$ and 
\begin{equation*}
  \abs{K(x_0,y_0)}=\frac{\abs{\Omega(x_0-y_0)}}{\abs{x_0-y_0}^d}
    =\frac{\abs{\Omega(Ar\theta_0)}}{\abs{Ar\theta_0}^d}
    =\frac{\abs{\Omega(\theta_0)}}{(Ar)^d}
    \eqsim\frac{1}{(Ar)^d},
\end{equation*}
recalling that the implied constant was allowed to depend on $\abs{\Omega(\theta_0)}$.

We then consider the integrals in \eqref{eq:ndKdiff}. Writing $x\in B(x_0,r)=B(y_0+Ar\theta_0,r)$ as $x=y_0+Ar\theta_0+ru$ and $y\in B(y_0,r)$ as $y=y_0+rv$, where $u,v\in B(0,1)$, and using the homogeneity of $\Omega$, we have
\begin{equation*}
\begin{split}
  K(x,y)-K(x_0,y_0)
  &=\frac{\Omega(x-y)}{\abs{x-y}^d}-\frac{\Omega(x_0-y_0)}{\abs{x_0-y_0}^d} \\
  &=\frac{\Omega(Ar\theta_0+r(u-v))}{\abs{Ar\theta_0+r(u-v)}^d}-\frac{\Omega(Ar\theta_0)}{\abs{Ar\theta_0}^d} \\
  &=\frac{1}{(Ar)^d}\Big(\frac{\Omega(\theta_0+A^{-1}(u-v))}{\abs{\theta_0+A^{-1}(u-v)}^d}-\Omega(\theta_0)\Big),
\end{split}
\end{equation*}
where
\begin{equation*}
\begin{split}
  &\frac{\Omega(\theta_0+A^{-1}(u-v))}{\abs{\theta_0+A^{-1}(u-v)}^d}-\Omega(\theta_0) \\
  &=\frac{\Omega(\theta_0+A^{-1}(u-v))-\Omega(\theta_0)}{\abs{\theta_0+A^{-1}(u-v)}^d}+\Omega(\theta_0)\Big(\frac{1}{\abs{\theta_0+A^{-1}(u-v)}^d}-1\Big)=:I+II.
\end{split}
\end{equation*}
Here it is immediate that $\abs{II}\lesssim A^{-1}$, and hence the integral of $(Ar)^{-d}II$ over either $x\in\tilde B$ or $y\in B$ is bounded by $A^{-d-1}=A^{-d}\eps_A$.

We turn to term $I$. Keeping either $x\in\tilde B$ fixed and varying $y\in B$, or the other way round, the difference $u-v$ varies over a subset of $B(0,2)$. Hence both $\int_{\tilde B}(Ar)^{-d}\abs{I}\ud x$ and $\int_B (Ar)^{-d}\abs{I}\ud y$ are dominated by
\begin{equation*}
\begin{split}
  &   \frac{1}{(Ar)^d}\int_{B(0,2)}\Babs{\frac{\Omega(\theta_0+A^{-1}z)-\Omega(\theta_0)}{\abs{\theta_0+A^{-1}z}^d}}r^d\ud z \\
  &\lesssim A^{-d}\fint_{B(0,2/A)}\abs{\Omega(\theta_0+s)-\Omega(\theta_0)}\ud s = A^{-d}\eps_A
\end{split}
\end{equation*}
by the assumption that $\theta_0$ is a Lebesgue point of $\Omega$.
\end{proof}

\subsection{Approximate weak factorisation}

For the class of non-degenerate Calder\'on--Zygmund operators just described, we prove certain ``weak factorisation'' type results that are pivotal in our proof of Theorem \ref{thm:summary}. These results have a technical flavour and may fail to have an ``independent interest'', but they are precisely what we need below. For a ball $B\subset\R^d$, we denote
\begin{equation*}
\begin{split}
    L^\infty(B) &=\{f\in L^\infty(\R^d): f=1_B f\},\\ L^\infty_0(B) &=\{f\in L^\infty(B):\int_B f=0\},\\
    L^\infty_+(B) &=\{f\in L^\infty(B): f\geq 0\}.
\end{split}
\end{equation*}

\begin{lemma}\label{lem:dec1}
Let $T$ be a non-degenerate Calder\'on--Zygmund operator. Using the notation of Proposition \ref{prop:unifNonDegImplies},
if $f\in L^\infty_0(B)$ and $g\in L^\infty_+(\tilde B)$ is such that $\Norm{g}{\infty}\lesssim\fint_{\tilde B}g$, then there is a decomposition
\begin{equation*}
  f=g Th-h T^* g+\tilde{f},
\end{equation*}
where $\tilde{f}\in L^\infty_0(\supp g)$ and $h\in L^\infty(\supp f)$ satisfy
\begin{equation*}
  \Norm{g}{\infty}\Norm{h}{\infty}\lesssim A^d\Norm{f}{\infty},\qquad
  \Norm{\tilde f}{\infty}\lesssim\eps_A\Norm{f}{\infty},
\end{equation*}
provided that $A$ is chosen large enough so that $\eps_A\ll 1$.
\end{lemma}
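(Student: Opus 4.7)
The plan is to invert $T^{*}g$ explicitly on $B$ and read off $h$ by pointwise division. Proposition~\ref{prop:unifNonDegImplies} gives, for every $x\in B$,
\[
T^{*}g(x) = c + e(x),\qquad c := K(x_0,y_0)\int_{\tilde B} g,\qquad \Norm{e}{L^\infty(B)}\lesssim \frac{\Norm{g}{\infty}\eps_A}{A^{d}}.
\]
The assumption $\Norm{g}{\infty}\lesssim\fint_{\tilde B} g$ together with \eqref{eq:ndKcenter} yields $\abs{c}\gtrsim \Norm{g}{\infty}/A^{d}$, so once $A$ is fixed large enough that $\eps_A$ is small relative to the implied constants, $\abs{T^{*}g(x)}\geq \abs{c}/2$ throughout $B$.

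I would then define $h(x):=-f(x)/T^{*}g(x)$ for $x\in B$ and $h\equiv 0$ elsewhere. Then $h$ is supported in $\supp f$, with $\Norm{h}{\infty}\leq 2\Norm{f}{\infty}/\abs{c}\lesssim A^{d}\Norm{f}{\infty}/\Norm{g}{\infty}$, which gives the first bound $\Norm{g}{\infty}\Norm{h}{\infty}\lesssim A^{d}\Norm{f}{\infty}$. Setting $\tilde f:=f+hT^{*}g-gTh$ makes the decomposition $f=gTh-hT^{*}g+\tilde f$ automatic. On $B$, the choice of $h$ gives $hT^{*}g=-f$ and $g\equiv 0$, so $\tilde f=0$; outside $B$ both $f$ and $h$ vanish and $\tilde f=-gTh$, which is supported in $\supp g$. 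The zero mean follows by Fubini: $\int\tilde f=-\int gTh=-\int hT^{*}g=\int f=0$.

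The main task is the $L^{\infty}$ bound $\Norm{\tilde f}{\infty}=\Norm{gTh}{L^\infty(\tilde B)}\lesssim\eps_A\Norm{f}{\infty}$. For $y\in\tilde B$ I split
\[
Th(y)=K(x_0,y_0)\int_B h+\int_B\bigl[K(y,x)-K(x_0,y_0)\bigr]h(x)\ud x.
\]
The second integral is controlled by $\Norm{h}{\infty}\eps_A/A^{d}\lesssim \eps_A\Norm{f}{\infty}/\Norm{g}{\infty}$ via \eqref{eq:ndKdiff}. For the first, the identity $1/T^{*}g=1/c-e/(cT^{*}g)$ and $\int f=0$ give $\int_B h=\int_B fe/(cT^{*}g)$, of modulus $\lesssim \Norm{f}{\infty}\Norm{g}{\infty}\eps_A r^{d}/(A^{d}\abs{c}^{2})\lesssim \eps_A A^{d}r^{d}\Norm{f}{\infty}/\Norm{g}{\infty}$; multiplying by $\abs{K(x_0,y_0)}\eqsim (A^{d}r^{d})^{-1}$ gives the same $\eps_A\Norm{f}{\infty}/\Norm{g}{\infty}$ contribution. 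Adding the two pieces and multiplying by $g$ delivers $\Norm{\tilde f}{\infty}\lesssim \eps_A\Norm{f}{\infty}$. The principal obstacle is legitimising the pointwise inversion of $T^{*}g$: the hypothesis $\Norm{g}{\infty}\lesssim\fint_{\tilde B}g$ is exactly what makes its principal part $c$ dominate the error $e$, so that $h$ is a bounded function and the remainder terms all come with a saving factor~$\eps_A$.
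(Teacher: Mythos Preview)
Your proposal is correct and follows essentially the same approach as the paper: define $h=-f/T^{*}g$, use Proposition~\ref{prop:unifNonDegImplies} to show $T^{*}g$ is bounded below on $B$, and then estimate $Th$ on $\tilde B$ exploiting $\int f=0$. The only cosmetic difference is that the paper splits $Th$ by decomposing the function, writing $Th=T\big(\frac{f}{T^{*}g}-\frac{f}{c}\big)+\frac{1}{c}Tf$ and using $\int f=0$ in the term $Tf$, whereas you split the kernel, writing $Th=K(x_0,y_0)\int h+\int(K-K(x_0,y_0))h$ and using $\int f=0$ to control $\int h$; the two rearrangements are equivalent.
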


\begin{proof}
The decomposition is given by
\begin{equation*}
  f=\frac{f}{T^* g}T^*g
   =:-h T^*g
   =-h T^*g+gTh-gTh
   =:-h T^*g+gTh+\tilde{f},
\end{equation*}
where we need to justify that the definition of $h:=-f/T^*g$ does not involve division by zero. However, if $y\in B$, then
\begin{equation*}
\begin{split}
  T^*g(y)
  &=\int_{\tilde B}K(x,y)g(x)\ud x \\
  &=K(x_0,y_0)\int_{\tilde B}g(x)\ud x+\int_{\tilde B}[K(x,y)-K(x_0,y_0)]g(x)\ud x=I+II,
\end{split}
\end{equation*}
where, using Proposition \ref{prop:unifNonDegImplies},
\begin{equation*}
  \abs{I}\eqsim \frac{1}{A^d r^d}\int_{\tilde B}g\eqsim\frac{1}{A^d}\fint_{\tilde B}g\eqsim\frac{1}{A^d}\Norm{g}{\infty}
\end{equation*}
and
\begin{equation*}
   \abs{II}\lesssim \int_{\tilde B}\abs{K(x,y)-K(x_0,y_0)}\ud x \Norm{g}{\infty}\lesssim \frac{\eps_A}{A^d}\Norm{g}{\infty},
 \end{equation*}
so that
\begin{equation*}
  \abs{T^*g(y)}=\abs{I+II}\geq\abs{I}-\abs{II}\gtrsim \frac{1}{A^d}\Norm{g}{\infty},
\end{equation*}
recalling that $A$ was chosen large enough so that $\eps_A\ll 1$. This justifies the well-definedness of the decomposition, and we turn to the quantitative bounds.

From the previous considerations it directly follows that
\begin{equation*}
  \Norm{g}{\infty}\Norm{h}{\infty}\lesssim\Norm{g}{\infty}\frac{\Norm{f}{\infty}}{A^{-d}\Norm{g}{\infty}}=A^d \Norm{f}{\infty}.
\end{equation*}
It is also immediate that
\begin{equation*}
  -\int_{\tilde B}\tilde{f}
  =\int gTh=\int hT^*g=\int \frac{f}{T^*g}T^*g=\int_B f=0.
\end{equation*}
Let us then estimate
\begin{equation*}
  Th=T\Big(\frac{f}{T^*g}\Big)
  =T\Big(\frac{f}{T^*g}-\frac{f}{K(x_0,y_0)\int_{\tilde B}g}\Big)+\frac{1}{K(x_0,y_0)\int_{\tilde B}g}Tf=:I'+II'.
\end{equation*}
For $y\in B$,
\begin{equation*}
\begin{split}
  &\Babs{\frac{1}{T^*g(y)}-\frac{1}{K(x_0,y_0)\int_{\tilde B}g(y)}}
  =\Babs{\frac{K(x_0,y_0)\int_{\tilde B}g-T^* g(y)}{T^*g(y)K(x_0,y_0)\int_{\tilde B}g}} \\
  &\lesssim\frac{1}{(A^{-d}\Norm{g}{\infty})^2}\int_{\tilde B}\abs{K(x_0,y_0)-K(x,y)}\abs{g(x)}\ud x \\
  &\lesssim\frac{1}{(A^{-d}\Norm{g}{\infty})^2}\frac{\eps_A}{A^d}\Norm{g}{\infty}=\frac{A^d \eps_A}{\Norm{g}{\infty}}.
\end{split}
\end{equation*}
Hence for $x\in\tilde B$,
\begin{equation*}
\begin{split}
  \abs{I'(x)}
  &\leq\int_{B}\abs{K(x,y)}\Norm{f}{\infty}\frac{A^d \eps_A}{\Norm{g}{\infty}}\ud y \\
  &\lesssim \fint_{B}\frac{1}{A^d}\Norm{f}{\infty}\frac{A^d \eps_A}{\Norm{g}{\infty}}\ud y 
  =\eps_A\frac{\Norm{f}{\infty}}{\Norm{g}{\infty}}.
\end{split}
\end{equation*}
On the other hand, recalling that $f\in L^\infty_0(B)$,
\begin{equation*}
\begin{split}
  \abs{Tf(x)}
  &=\Babs{\int_B [K(x,y)-K(x_0,y_0)]f(y)\ud y} \\
  &\leq\int_B\abs{K(x,y)-K(x_0,y_0)}\ud y\Norm{f}{\infty}
  \lesssim \frac{\eps_A}{A^d}\Norm{f}{\infty},
\end{split}
\end{equation*}
and thus
\begin{equation*}
  \abs{II'(x)}\lesssim\frac{\abs{Tf(x)}}{A^{-d}\Norm{g}{\infty}}\lesssim\eps_A\frac{\Norm{f}{\infty}}{\Norm{g}{\infty}}.
\end{equation*}
It is then immediate that
\begin{equation*}
  \Norm{\tilde f}{\infty}
  =\Norm{gTh}{\infty}
  \lesssim\Norm{g}{\infty}\eps_A\frac{\Norm{f}{\infty}}{\Norm{g}{\infty}}=\eps_A\Norm{f}{\infty}.\qedhere
\end{equation*}
\end{proof}

By iterating the previous decomposition (but just once more), we achieve the useful additional property that the error term is supported on the same set as the original function. In the following lemma and below, we will make use of the following notion:

\begin{definition}[Major subset]\label{def:major}
If $E\subset F\subset\R^d$ are sets of finite measure, we say that $E$ is a {\em major subset} of $F$ if $\abs{E}\geq c\abs{F}$ for some fixed constant $c\in(0,1)$ that  depends only on the admissible parameters, as described in Section \ref{ss:notation}.
\end{definition}

In the following lemma, we denote certain major subsets by the suggestive letter $Q$, since the main subsequent application deals with the case, where these sets a cubes; however, the lemma itself does not require assuming this.

\begin{lemma}\label{lem:dec2}
Let $T$ be a non-degenerate Calder\'on--Zygmund operator.
Let $B$ and $\tilde B$ be as in Proposition \ref{prop:unifNonDegImplies}, and $Q\subset B$, $\tilde Q\subset\tilde B$ be their major subsets, i.e., $\abs{Q}\gtrsim\abs{B}$ and $\abs{\tilde Q}\gtrsim\abs{\tilde B}$.

If $f\in L^\infty_0(Q)$, there is a decomposition
\begin{equation}\label{eq:dec2}
  f=\sum_{i=1}^2 (g_i T h_i-h_i T^* g_i)+\tilde{\tilde f},
\end{equation}
where $\tilde{\tilde f}\in L^\infty_0(Q)$, $g_i\in L^\infty(\tilde Q)$ and $h_i\in L^\infty(Q)$ satisfy
\begin{equation*}
  \Norm{g_i}{\infty}\Norm{h_i}{\infty}\lesssim A^d\Norm{f}{\infty},\qquad
  \Norm{\tilde{\tilde f}}{\infty}\lesssim\eps_A\Norm{f}{\infty},
\end{equation*}
provided that $A$ is chosen large enough so that $\eps_A\ll 1$.
\end{lemma}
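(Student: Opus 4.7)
My plan is to prove Lemma \ref{lem:dec2} by applying Lemma \ref{lem:dec1} twice: once as stated, then once in a ``transposed'' form in which the roles of $B$ and $\tilde B$ (equivalently of $T$ and $T^*$) are interchanged. This is exactly what the phrase ``iterating the previous decomposition (but just once more)'' points toward. The first application gives an error supported on $\tilde Q$ (on the ``wrong side''); the second converts it into an error supported on $Q$ (the ``right side''), at the cost of a factor $\eps_A$ at each step, hence $\eps_A^2\leq\eps_A$ overall.

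For the first application I would take $g_1:=\mathbf{1}_{\tilde Q}$; since $\tilde Q$ is major in $\tilde B$, the hypothesis $\fint_{\tilde B} g_1\gtrsim\Norm{g_1}{\infty}$ holds, and Lemma \ref{lem:dec1} produces
\begin{equation*}
  f=g_1 T h_1-h_1 T^* g_1+\tilde f_1,\qquad \tilde f_1\in L^\infty_0(\tilde Q),\ h_1\in L^\infty(Q),
\end{equation*}
with the claimed bounds $\Norm{g_1}{\infty}\Norm{h_1}{\infty}\lesssim A^d\Norm{f}{\infty}$ and $\Norm{\tilde f_1}{\infty}\lesssim\eps_A\Norm{f}{\infty}$. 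For the second application, applied to $\tilde f_1$, I would use $h_2:=\mathbf{1}_Q$ (which satisfies $\fint_Q h_2\gtrsim\Norm{h_2}{\infty}$ since $Q$ is major in $B$) and set $g_2:=\tilde f_1/Th_2$ on $\tilde Q$. Writing $\tilde f_1=g_2 Th_2-h_2 T^* g_2+h_2 T^* g_2$, one reads off $\tilde{\tilde f}:=h_2 T^* g_2$, automatically supported on $\supp h_2=Q$, with zero average $\int_Q\tilde{\tilde f}=\int h_2 T^* g_2=\int g_2 Th_2=\int_{\tilde Q}\tilde f_1=0$.

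The main and really only substantive obstacle is that this second step uses a ``dual'' form of Lemma \ref{lem:dec1}, in which the input lives on $\tilde B$ and the remainder ends up back on $B$. But Proposition \ref{prop:unifNonDegImplies} is stated symmetrically in $B$ and $\tilde B$: besides the lower bound $\abs{K(x_0,y_0)}\gtrsim (Ar)^{-d}$, it bounds the kernel-difference integrals over \emph{both} $B$ and $\tilde B$ by $\eps_A/A^d$. This symmetry allows me to repeat the proof of Lemma \ref{lem:dec1} verbatim, now with $Th_2$ on $\tilde B$ playing the role previously played by $T^* g$ on $B$: one obtains $\abs{Th_2}\gtrsim A^{-d}\Norm{h_2}{\infty}$ on $\tilde B$ (which justifies dividing by $Th_2$ and yields $\Norm{g_2}{\infty}\Norm{h_2}{\infty}\lesssim A^d\Norm{\tilde f_1}{\infty}$), and, via the cancellation $\int\tilde f_1=0$ applied to $T^*\tilde f_1$ exactly as in the original proof, the error bound $\Norm{\tilde{\tilde f}}{\infty}\lesssim\eps_A\Norm{\tilde f_1}{\infty}$.

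Assembling the two decompositions gives \eqref{eq:dec2}, with chained bounds $\Norm{g_i}{\infty}\Norm{h_i}{\infty}\lesssim A^d\Norm{f}{\infty}$ (using $\Norm{\tilde f_1}{\infty}\leq\Norm{f}{\infty}$ for $i=2$) and $\Norm{\tilde{\tilde f}}{\infty}\lesssim\eps_A^2\Norm{f}{\infty}\leq\eps_A\Norm{f}{\infty}$ for $A$ large. I do not foresee any genuine difficulty beyond writing out this dual form of Lemma \ref{lem:dec1}, which amounts to bookkeeping.
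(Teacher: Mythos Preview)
Your proposal is correct and follows essentially the same route as the paper: apply Lemma \ref{lem:dec1} once with $g_1=1_{\tilde Q}$ to move the error to $\tilde Q$, then apply the ``dual'' form (i.e., Lemma \ref{lem:dec1} with $(B,\tilde B,T)$ replaced by $(\tilde B,B,T^*)$, legitimised precisely by the $B/\tilde B$ symmetry of Proposition \ref{prop:unifNonDegImplies}) with $\tilde g=1_Q$ to bring the error back to $Q$. Your explicit formula $g_2=\tilde f_1/Th_2$ is exactly the paper's $-\tilde h=-(-\tilde f/T\tilde g)$ after relabelling, and your $\tilde{\tilde f}=h_2T^*g_2$ coincides with the paper's $-\tilde g T^*\tilde h$; the only cosmetic difference is that the paper invokes Lemma \ref{lem:dec1} as a black box for the second step, whereas you re-derive its content in the dual setup.
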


\begin{proof}
We first apply Lemma \ref{lem:dec1} to $f$ and $g_1:=1_{\tilde Q}\in L^\infty_+(\tilde B)$, which clearly satisfies the condition $\Norm{g}{\infty}=1\lesssim\abs{\tilde Q}/\abs{\tilde B}=\fint_{\tilde B}g$. Thus Lemma \ref{lem:dec1} yields a decomposition
\begin{equation*}
  f=g_1Th_1-h_1T^*g_1+\tilde{f},
\end{equation*}
where $\tilde{f}\in L^\infty_0(\supp g_1)=L^\infty_0(\tilde Q)$, $g_1\in L^\infty(\tilde Q)$ and $h_1\in L^\infty(\supp f)\subset L^\infty(Q)$ with the estimates
\begin{equation*}
 \Norm{g_1}{\infty}\Norm{h_1}{\infty}\lesssim A^d\Norm{f}{\infty},\qquad\Norm{\tilde f}{\infty}\lesssim \eps_A\Norm{f}{\infty}.
\end{equation*}

We then wish to apply Lemma \ref{lem:dec1} again, this time to the functions $\tilde f$ and $\tilde g:=1_Q\in L^\infty_+(B)$, and the adjoint operator $T^*$ in place of $T$. For this, we notice that the conclusions of Proposition \ref{prop:unifNonDegImplies} are preserved under the replacement of $(B,\tilde B,T)$ by $(\tilde B,B,T^*)$. Hence Lemma \ref{lem:dec1} provides a decomposition
\begin{equation*}
  \tilde{f}=\tilde{g}T^*\tilde{h}-\tilde{h}T\tilde{g}+\tilde{\tilde f},
\end{equation*}
where $\tilde{\tilde f}\in L^\infty_0(\supp\tilde g)=L^\infty_0(Q)$, $\tilde g\in L^\infty(Q)$ and $\tilde h\in L^\infty(\supp\tilde f)\subset L^\infty(\tilde Q)$ with the estimates
\begin{equation*}
  \Norm{\tilde g}{\infty}\Norm{\tilde h}{\infty}\lesssim A^d\Norm{\tilde f}{\infty}\lesssim A^d\Norm{f}{\infty},\qquad
  \Norm{\tilde{\tilde f}}{\infty}\lesssim\eps_A\Norm{\tilde f}{\infty}\lesssim\eps_A\Norm{f}{\infty}.
\end{equation*}
(We could write $\eps_A^2$ in the ultimate right, but since $\eps_A\to 0$ at an unspecified rate anyway, this is irrelevant.)
It remains to define $g_2:=-\tilde h\in L^\infty(\tilde Q)$, $h_2:=\tilde g\in L^\infty(Q)$ so that
\begin{equation*}
  \tilde{g}T^*\tilde{h}-\tilde{h}T\tilde{g}
  =-h_2 T^* g_2+g_2 T h_2,
\end{equation*}
and we get the required decomposition \eqref{eq:dec2}.
\end{proof}

\subsection{Necessary conditions for $[b,T]:L^p\to L^q$ when $p\leq q$}
We now come to the proof of some of the ``only if'' direction of Theorem \ref{thm:summary}. Assuming a weak form of the boundedness of the commutator $[b,T]$, we wish to derive the membership of $b$ in a suitable function space, with estimates for its norm. The relevant spaces here will be the functions of bounded mean oscillation,
\begin{equation*}
  \BMO(\R^d):=\Big\{b\in L^1_{\loc}(\R^d)\Big|\Norm{b}{\BMO}:=\sup_B\fint_B\abs{b-\ave{b}_B}<\infty\Big\},
\end{equation*}
where the supremum is over all balls $B\subset\R^d$, and the homogeneous H\"older spaces
\begin{equation*}
  \dot C^{0,\alpha}(\R^d):=\Big\{b:\R^d\to\C\Big|\Norm{b}{\dot C^{0,\alpha}}:=\sup_{x\neq y}\frac{\abs{b(x)-b(y)}}{\abs{x-y}^\alpha}<\infty\Big\}.
\end{equation*}
Note that we do not impose any boundedness condition on $b$; this would lead to the inhomogeneous H\"older space $C^{0,\alpha}$, which does not play any role in our results.

\begin{theorem}\label{thm:pleq1}
Let $K$ be a non-degenerate Calder\'on--Zygmund kernel, and $b\in L^1_{\loc}(\R^d)$.
Let further
\begin{equation*}
   1<p\leq q<\infty,\qquad \alpha:=d\Big(\frac{1}{p}-\frac{1}{q}\Big)\geq 0,
\end{equation*}
and suppose that $[b,T]$ satisfies the following weak form of $L^p\to L^q$ boundedness:
\begin{equation}\label{eq:veryWeakLpLq}
\begin{split}
  \abs{\pair{[b,T]f}{g}}
  &=\Babs{\iint (b(x)-b(y))K(x,y)f(y)g(x)\ud y\ud x} \\
  &\leq \Theta\cdot \Norm{f}{\infty}\abs{B}^{1/p}\cdot\Norm{g}{\infty}\abs{\tilde B}^{1/q'},
\end{split}
\end{equation}
whenever $f\in L^\infty(B)$, $g\in L^\infty(\tilde B)$ for any two balls of equal radius $r$ and distance $\dist(B,\tilde B)\gtrsim r$.
Then
\begin{itemize}
  \item if $\alpha=0$, equivalently $p=q$, we have  $b\in\BMO(\R^d)$, and
  $\Norm{b}{\BMO}\lesssim \Theta;$ 
  \item if $\alpha\in(0,1]$, we have $b\in\dot C^{0,\alpha}(\R^d)$, and
  $\Norm{b}{\dot C^{0,\alpha}}\lesssim \Theta;$ 
  \item if $\alpha>1$, the function $b$ is constant, so in fact $[b,T]=0$.
\end{itemize}
\end{theorem}

\begin{proof}
Let us consider a fixed ball $B\subset\R^d$ of radius $r$. Then
\begin{equation*}
  \fint_B\abs{b-\ave{b}_B}\eqsim\sup_{\substack{f\in L^\infty_0(B) \\ \Norm{f}{\infty}\leq 1}}\Babs{\fint_B bf}
\end{equation*}
is finite by the assumption that $b\in L^1_{\loc}(\R^d)$. Given $f\in L^\infty_0(B)$, we apply Lemma \ref{lem:dec2} to write
\begin{equation*}
  f=\sum_{i=1}^2(g_i Th_i-h_i T^* g_i)+\tilde{\tilde f},
\end{equation*}
where $\tilde{\tilde f}\in L^\infty_0(B)$, $g_i\in L^\infty(\tilde B)$ and $h_i\in L^\infty(B)$ satisfy
\begin{equation*}
   \Norm{g_i}{\infty}\Norm{h_i}{\infty}\lesssim A^d\Norm{f}{\infty},\qquad\Norm{\tilde{\tilde f}}{\infty}\lesssim \eps_A\Norm{f}{\infty},
\end{equation*}
and $\tilde B$ is another ball of radius $r$ such that $\dist(B,\tilde B)\eqsim Ar$.

Then
\begin{equation*}
\begin{split}
  \int bf
  &=\sum_{i=1}^2 \int b(g_iTh_i-h_i T^* g_i)+\int b\tilde{\tilde f} \\
  &=\sum_{i=1}^2 \int [g_i bTh_i-g_i T(b h_i)]+\int b\tilde{\tilde f}
  =\sum_{i=1}^2 \int g_i[b,T]h_i+\int b\tilde{\tilde f},
\end{split}
\end{equation*}
where, by assumption \eqref{eq:veryWeakLpLq},
\begin{equation*}
  \Babs{\int g_i[b,T]h_i}
  \leq \Theta\cdot \Norm{g_i}{\infty}\Norm{h_i}{\infty}\cdot\abs{B}^{1/p+1/q'}
  \lesssim \Theta\cdot A^d\Norm{f}{\infty}\cdot \abs{B}\cdot r^{\alpha}.
\end{equation*}
Thus
\begin{equation*}
  \Babs{\fint_B bf}
  \lesssim \Theta\cdot A^d\Norm{f}{\infty}r^\alpha+\Babs{\fint_B b\tilde{\tilde f}},
\end{equation*}
where
\begin{equation*}
  \Babs{\fint_B b\tilde{\tilde f}}
  \leq\Big(\fint_B\abs{b-\ave{b}_B}\Big)\Norm{\tilde{\tilde f}}{\infty}
  \lesssim\Big(\fint_B\abs{b-\ave{b}_B}\Big)\eps_A\Norm{f}{\infty}.
\end{equation*}
Taking the supremum over $f\in L^\infty_0(B)$ of norm one, we deduce that
\begin{equation*}
  \fint_B\abs{b-\ave{b}_B}\lesssim \Theta A^d r^\alpha+\eps_A\fint_B\abs{b-\ave{b}_B},
\end{equation*}
and the last term can be absorbed if $A$ is fixed large enough, depending only on the implied constants. Thus
\begin{equation*}
  \fint_B\abs{b-\ave{b}_B}\lesssim \Theta r^\alpha.
\end{equation*}

If $\alpha=0$, this is precisely the condition $b\in\BMO(\R^d)$ with the claimed estimate.

For $\alpha>0$, this is also a well-known reformulation of $b\in\dot C^{0,\alpha}(\R^d)$ (which consists only of constants for $\alpha>1$). We recall the argument for completeness.

Let $x_i$, $i=1,2$, be two Lebesgue points of $b$, and let $r:=\abs{x_1-x_2}$. Then
\begin{equation*}
  b(x_i)=\lim_{t\to 0}\ave{b}_{B(x_i,t)}
  =\sum_{k=0}^\infty\big(\ave{b}_{B(x_i, 2^{-k-1}r)}-\ave{b}_{B(x_i,2^{-k}r)}\big)+\ave{b}_{B(x_i,r)}.
\end{equation*}
If $B\subset B^*$ are two balls of radius comparable to $R$, then
\begin{equation}\label{eq:2balls}
  \abs{\ave{b}_B-\ave{b}_{B^*}}
  =\Babs{\fint_B(b-\ave{b}_{B^*})}
  \leq\fint_B\abs{b-\ave{b}_{B^*}}
  \lesssim\fint_{B^*}\abs{b-\ave{b}_{B^*}} \lesssim \Theta R^\alpha,
\end{equation}
and thus
\begin{equation*}
  \sum_{k=0}^\infty\abs{\ave{b}_{B(x_i, 2^{-k-1}r)}-\ave{b}_{B(x_i,2^{-k}r)}}
  \lesssim\sum_{k=0}^\infty \Theta (2^{-k}r)^\alpha
  \lesssim \Theta r^\alpha.
\end{equation*}
Hence
\begin{equation*}
  \abs{b(x_1)-b(x_2)}
  \lesssim \Theta r^\alpha+\abs{\ave{b}_{B(x_1,r)}-\ave{b}_{B(x_2,r)}},
\end{equation*}
where another application of \eqref{eq:2balls} shows that
\begin{equation*}
   \abs{\ave{b}_{B(x_i,r)}-\ave{b}_{B(\frac12(x_1+x_2),2r)}}
   \lesssim \Theta r^\alpha.
\end{equation*}
Thus altogether
\begin{equation*}
   \abs{b(x_1)-b(x_2)}
  \lesssim \Theta r^\alpha
  =\Theta\abs{x_1-x_2}^\alpha,
\end{equation*}
and this can be extended to all $x_1,x_2$ by redefining $b$ in a set of measure zero.
This is the required bound for $\Norm{b}{\dot C^{0,\alpha}}$ if $\alpha\in(0,1]$.

If $\alpha>1$, we let $y_k:=x_1+N^{-1}k(x_2-x_1)$ for $k=0,1,\ldots,N$ to deduce that
\begin{equation*}
\begin{split}
  \abs{b(x_1)-b(x_2)}
  \leq\sum_{k=1}^N\abs{b(y_k)-b(y_{k-1})} 
  \lesssim\sum_{k=1}^N \Theta (N^{-1}r)^\alpha 
  =N^{1-\alpha}\Theta r^\alpha.
\end{split}
\end{equation*}
With $N\to\infty$, this shows that $b(x_1)=b(x_2)$, and hence $b$ is constant.
\end{proof}

\subsection{Necessary condition for $[b,T]:L^p\to L^q$ when $p>q$}
We now come to the more exotic case of Theorem \ref{thm:summary}, which is precisely restated in the following:

\begin{theorem}\label{thm:offDiag}
Let $K$ be a non-degenerate Calder\'on--Zygmund kernel, and $b\in L^1_{\loc}(\R^d)$. Let
\begin{equation*}
  1<q<p<\infty,\qquad r=\frac{pq}{p-q}\in(1,\infty),
\end{equation*}
and suppose that $[b,T]$ satisfies the following weak form of $L^p\to L^q$ boundedness:
\begin{equation}\label{eq:quiteWeakLpLq}
  \sum_{i=1}^N\abs{\pair{[b,T]f_i}{g_i}}
  \leq \Theta\BNorm{\sum_{i=1}^N\Norm{f_i}{\infty} 1_{Q_i}}{p}
    \BNorm{\sum_{i=1}^N\Norm{g_i}{\infty} 1_{\tilde Q_i}}{q'},
\end{equation}
whenever, for each $i=1,\ldots,N$, we have $f_i\in L^\infty(Q_i)$ and $g_i\in L^\infty(\tilde Q_i)$ for cubes $Q_i$ and $\tilde Q_i$ such that $\dist(Q_i,\tilde Q_i)\gtrsim\diam(Q_i)=\diam(\tilde Q_i)$.

Then $b=a+c$ for some $a\in L^r(\R^d)$ and some constant $c\in\C$, where
$
  \Norm{a}{r}\lesssim\Theta.
$
\end{theorem}

Note that each term on the left of \eqref{eq:quiteWeakLpLq} can be defined as in \eqref{eq:veryWeakLpLq}.
In order to better understand the assumption \eqref{eq:quiteWeakLpLq}, we include:

\begin{lemma}
\begin{enumerate}
  \item For any $p,q\in(1,\infty)$, \eqref{eq:quiteWeakLpLq} follows if $[b,T]$ exists as a bounded linear operator $[b,T]: L^p\to L^q$, and $\Theta\eqref{eq:quiteWeakLpLq}\leq\Norm{[b,T]}{L^p\to L^q}$.
  \item If $p\leq q$, then \eqref{eq:quiteWeakLpLq} follows from \eqref{eq:veryWeakLpLq}, and $\Theta\eqref{eq:quiteWeakLpLq}\leq \Theta\eqref{eq:veryWeakLpLq}$.
\end{enumerate}
\end{lemma}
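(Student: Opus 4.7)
The plan is to prove both parts by reducing them to discrete H\"older-type inequalities, the key difference being how the sum of bilinear pairings on the left of \eqref{eq:quiteWeakLpLq} is processed: in part~(1) it must be linearised into a single pairing before the operator bound can be applied, whereas in part~(2) one applies \eqref{eq:veryWeakLpLq} termwise and only afterwards combines the estimates.

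For part~(1), I would use a Rademacher randomisation. First, choose unimodular scalars $\sigma_i$ with $\sigma_i\pair{[b,T]f_i}{g_i}=\abs{\pair{[b,T]f_i}{g_i}}$. Letting $\{\eps_i\}$ be independent $\pm 1$ random signs, the orthogonality relation $\Exp[\eps_i\eps_j]=\delta_{ij}$ gives the identity
\begin{equation*}
  \sum_i \abs{\pair{[b,T]f_i}{g_i}}
  =\Exp\pair{[b,T]\big(\sum_i\eps_i\sigma_i f_i\big)}{\sum_j\eps_j g_j}.
\end{equation*}
For each realisation, the right-hand pairing is dominated by $\Norm{[b,T]}{L^p\to L^q}\bNorm{\sum_i\eps_i\sigma_i f_i}{p}\bNorm{\sum_j\eps_j g_j}{q'}$, and the pointwise estimate $\abs{\sum_i\eps_i\sigma_i f_i}\leq\sum_i\Norm{f_i}{\infty}1_{Q_i}$, together with its analogue for the $g_j$, conclude the proof after passing the absolute value inside the expectation.

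For part~(2), I would apply \eqref{eq:veryWeakLpLq} to each pair $(f_i,g_i)$ individually (cubes and balls of comparable diameter are interchangeable at the cost of a dimensional constant), obtaining
\begin{equation*}
  \sum_i\abs{\pair{[b,T]f_i}{g_i}}\lesssim\Theta\sum_i\Norm{f_i}{\infty}\Norm{g_i}{\infty}\abs{Q_i}^{1/p+1/q'},
\end{equation*}
using $\abs{Q_i}=\abs{\tilde Q_i}$. A generalised discrete H\"older inequality then applies: for non-negative sequences $(x_i),(y_i)$, one has $\sum_i x_i y_i\leq\bNorm{x}{\ell^s}\bNorm{y}{\ell^t}$ whenever $1/s+1/t\geq 1$, which follows from the classical case $1/s+1/t=1$ together with the fact that $\Norm{\cdot}{\ell^t}$ is decreasing in $t$. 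With $s=p$ and $t=q'$, admissible because $p\leq q$ forces $1/p+1/q'\geq 1$, this reduces matters to $\big(\sum_i\Norm{f_i}{\infty}^p\abs{Q_i}\big)^{1/p}\leq\Norm{F}{p}$ and its analogue for $G$, both of which follow from the pointwise inequality $(\sum_i a_i 1_{Q_i})^p\geq\sum_i a_i^p 1_{Q_i}$ valid for $a_i\geq 0$ and $p\geq 1$.

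The only conceptually non-routine step is the Rademacher decoupling in part~(1); everything else is bookkeeping with H\"older-type inequalities and I do not anticipate further obstacles.
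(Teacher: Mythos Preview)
Your proposal is correct and follows essentially the same approach as the paper: Rademacher decoupling for part~(1), and termwise application of \eqref{eq:veryWeakLpLq} followed by discrete H\"older and $\ell^s$-embeddings for part~(2). Your organisation of part~(2)---working directly with $\Norm{f_i}{\infty}\abs{Q_i}^{1/p}$ rather than passing through $\Norm{f_i}{p}$---is slightly more streamlined than the paper's, and your explicit flagging of the cube/ball conversion is a point the paper glosses over.
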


\begin{proof}
For certain fixed signs $\sigma_i$, and random signs $\eps_i$ on some probability space with expectation denoted by $\Exp$, we have
\begin{equation*}
\begin{split}
  \sum_{i=1}^N\abs{\pair{[b,T]f_i}{g_i}}
  &=\sum_{i=1}^N\sigma_i \pair{[b,T]f_i}{g_i}
  =\Exp\Bpair{[b,T]\sum_{i=1}^N\eps_i \sigma_i f_i}{\sum_{j=1}^N \eps_j g_j} \\
  &\leq\Exp\Norm{[b,T]}{L^p\to L^q}\BNorm{\sum_{i=1}^N\eps_i \sigma_i f_i}{p}\BNorm{\sum_{j=1}^N \eps_j g_j}{q'} \\
  &\leq \Norm{[b,T]}{L^p\to L^q}\BNorm{\sum_{i=1}^N\abs{f_i} }{p}\BNorm{\sum_{j=1}^N \abs{g_j} }{q'}.
\end{split}
\end{equation*}

If $p\leq q$, using \eqref{eq:veryWeakLpLq} followed by H\"older's inequality and several applications of $\Norm{\ }{\ell^s}\leq\Norm{\ }{\ell^t}$ if $t\leq s$, we find that
\begin{equation*}
\begin{split}
  \sum_{i=1}^N &\abs{\pair{[b,T]f_i}{g_i}}
  \leq\sum_{i=1}^N \Theta\Norm{f_i}{p}\Norm{g_i}{q'}
  \leq \Theta\Big(\sum_{i=1}^N\Norm{f_i}{p}^q\Big)^{1/q}\Big(\sum_{i=1}^N\Norm{g_i}{q'}^{q'}\Big)^{1/q'} \\
  &\leq \Theta\Big(\sum_{i=1}^N\Norm{f_i}{p}^p\Big)^{1/p}\Big(\sum_{i=1}^N\Norm{g_i}{q'}^{q'}\Big)^{1/q'} \\
  &= \Theta\BNorm{\Big(\sum_{i=1}^N\abs{f_i}^p\Big)^{1/p}}{p}\BNorm{\Big(\sum_{i=1}^N\abs{g_i}^{q'}\Big)^{1/q'}}{q'} 
  \leq \Theta\BNorm{\sum_{i=1}^N\abs{f_i}}{p}\BNorm{\sum_{i=1}^N\abs{g_i}}{q'},
\end{split}
\end{equation*}
where $\Theta=\Theta\eqref{eq:veryWeakLpLq}$.
\end{proof}

For the proof of Theorem \ref{thm:offDiag}, we need the following lemma. Given a cube $Q_0\subset\R^d$, we denote by $\mathscr D(Q_0)$ the collection of its dyadic subcubes (obtained by repeatedly bisecting each side of the initial cube any finite number of times).

\begin{lemma}\label{lem:dec3}
Let $f\in L^\infty_0(Q_0)$ for some cube $Q_0\subset\R^d$. Then it has a decomposition
\begin{equation*}
  f=\sum_{n=0}^N f_n,\qquad f_n=\sum_{k=0}^\infty f_{n,k},
\end{equation*}
where $f_{n,k}\in L^\infty_0(Q_{n,k})$ and $\Norm{f_{n,k}}{\infty}\lesssim\ave{\abs{f}}_{Q_{n,k}}$, and $Q_{n,k}\in\mathscr D(Q_0)$ are disjoint in $k$ for each $n$. Moreover, for all $n$ and $k$ we have $Q_{n,k}\subset Q_{n-1,j}$ for a unique $j$, and $\ave{\abs{f}}_{Q_{n,k}}>2\ave{\abs{f}}_{Q_{n-1,j}}$.
\end{lemma}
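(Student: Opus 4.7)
The statement is a standard iterated Calder\'on--Zygmund stopping-time decomposition; the work is just to arrange the bookkeeping so that every good piece $f_{n,k}$ has mean zero on the cube $Q_{n,k}$ carrying its support and is pointwise controlled by the average $\ave{\abs{f}}_{Q_{n,k}}$. I would start by constructing the stopping tree. Initialise $\mathscr{S}_0:=\{Q_0\}$. Inductively, for each $Q\in\mathscr{S}_{n-1}$, let $\operatorname{ch}(Q)$ be the collection of maximal cubes $Q'\in\mathscr{D}(Q_0)$ with $Q'\subsetneq Q$ and $\ave{\abs{f}}_{Q'}>2\ave{\abs{f}}_Q$; set $\mathscr{S}_n:=\bigcup_{Q\in\mathscr{S}_{n-1}}\operatorname{ch}(Q)$ and enumerate $\mathscr{S}_n=\{Q_{n,k}\}_k$. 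The cubes in $\operatorname{ch}(Q)$ are maximal hence disjoint, and every $Q_{n,k}$ lies in a unique parent $Q_{n-1,j}$ by construction, already giving the structural part of the claim. Iterating the doubling rule gives $\ave{\abs{f}}_{Q_{n,k}}>2^n\ave{\abs{f}}_{Q_0}$, which, combined with $\ave{\abs{f}}_Q\leq\Norm{f}{\infty}$, forces $\mathscr{S}_{N+1}=\emptyset$ once $2^N>\Norm{f}{\infty}/\ave{\abs{f}}_{Q_0}$ (the case $\ave{\abs{f}}_{Q_0}=0$ being trivial since then $f\equiv 0$).

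Next I would define the good pieces. For $Q\in\mathscr{S}_n$ put $E_Q:=\bigcup_{Q'\in\operatorname{ch}(Q)}Q'$ and
\[
 f_{n,k}:=f_Q:=(f-\ave{f}_Q)\mathbf{1}_{Q\setminus E_Q}+\sum_{Q'\in\operatorname{ch}(Q)}(\ave{f}_{Q'}-\ave{f}_Q)\mathbf{1}_{Q'}.
\]
That $\int_Q f_Q=\int_Q f-\ave{f}_Q\abs{Q}=0$ is immediate. For the sup-norm: on $Q\setminus E_Q$ every dyadic descendant of $Q$ has average at most $2\ave{\abs{f}}_Q$, so Lebesgue differentiation yields $\abs{f}\leq 2\ave{\abs{f}}_Q$ a.e.\ there; on each $Q'\in\operatorname{ch}(Q)$, the maximality of $Q'$ gives $\ave{\abs{f}}_{Q'}\leq 2^{d+1}\ave{\abs{f}}_Q$ (the dyadic parent $\hat Q'$ not stopping, so $\ave{\abs{f}}_{\hat Q'}\leq 2\ave{\abs{f}}_Q$ and $\abs{Q'}/\abs{\hat Q'}=2^{-d}$). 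Combining, $\Norm{f_Q}{\infty}\lesssim\ave{\abs{f}}_Q$, which is the required estimate because $\ave{\abs{f}}_Q=\ave{\abs{f}}_{Q_{n,k}}$.

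Finally I would verify the identity $f=\sum_n\sum_k f_{n,k}$ by telescoping. A direct algebraic computation from the definition shows that for any $Q$,
\[
 (f-\ave{f}_Q)\mathbf{1}_Q=f_Q+\sum_{Q'\in\operatorname{ch}(Q)}(f-\ave{f}_{Q'})\mathbf{1}_{Q'}.
\]
Applying this at $Q_0$ (where $\ave{f}_{Q_0}=0$ since $f\in L^\infty_0(Q_0)$), and then iteratively on each cube of $\mathscr{S}_1,\mathscr{S}_2,\dots,\mathscr{S}_N$, after $N+1$ steps one obtains
\[
 f=\sum_{n=0}^N\sum_k f_{n,k}+\sum_{Q'\in\mathscr{S}_{N+1}}(f-\ave{f}_{Q'})\mathbf{1}_{Q'},
\]
and the tail vanishes because $\mathscr{S}_{N+1}=\emptyset$. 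The only real subtlety is the simultaneous requirement that each $f_{n,k}$ be mean-zero on $Q_{n,k}$ and enjoy the \emph{sharp} bound $\Norm{f_{n,k}}{\infty}\lesssim\ave{\abs{f}}_{Q_{n,k}}$ (with the average taken on $Q_{n,k}$ itself, not its parent); this is what forces the particular split above, and all other steps are routine.
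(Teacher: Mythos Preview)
Your proposal is correct and follows essentially the same route as the paper: the stopping families $\mathscr{S}_n$ coincide with the paper's $\mathcal{F}_n$, and your piece $f_Q=(f-\ave{f}_Q)\mathbf{1}_{Q\setminus E_Q}+\sum_{Q'}(\ave{f}_{Q'}-\ave{f}_Q)\mathbf{1}_{Q'}$ is algebraically identical to the paper's $f_F=\mathbf{1}_{E(F)}f+\sum_{F'}\mathbf{1}_{F'}\ave{f}_{F'}-\mathbf{1}_F\ave{f}_F$. The only difference is that you spell out the telescoping identity and the termination argument explicitly, whereas the paper leaves these as ``easily checked''.
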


\begin{proof}
Let $\mathcal F_0=\{Q_0\}$ and
\begin{equation*}
  \mathcal F_{n+1}:=\bigcup_{F\in\mathcal F_n}\operatorname{ch}_{\mathcal F}F,\quad
  \operatorname{ch}_{\mathcal F}F
  :=\{Q\in\mathscr D(F)\text{ maximal with }\ave{\abs{f}}_Q>2\ave{\abs{f}}_F\}.
\end{equation*}
Since $f\in L^\infty(Q_0)$, there is a finite $N$ such that $\mathcal F_n=\varnothing$ for all $n>N$. For $F\in\mathcal F=\bigcup_{n=0}^N\mathcal F_n$, we define
\begin{equation*}
  E(F):=F\setminus\bigcup_{F'\in\operatorname{ch}_{\mathcal F}F}F'
\end{equation*}
and then
\begin{equation*}
  f_F:=1_{E(F)}f+\sum_{F'\in\operatorname{ch}_{\mathcal F}F}1_{F'}\ave{f}_{F'}-1_F\ave{f}_F,
\end{equation*}
so that $\int f_F=0$ and
\begin{equation*}
  \abs{f_F}\leq 1_{E(F)}2\ave{\abs{f}}_F+\sum_{F'\in\operatorname{ch}_{\mathcal F}F}1_{F'}2\cdot 2^d\ave{\abs{f}}_{F}+1_F\ave{f}_F
  \lesssim 1_F\ave{\abs{f}}_F.
\end{equation*}
Letting $(f_{n,k},Q_{n,k})_{k=0}^\infty$ be some enumeration of $(f_F,F)_{F\in\mathcal F_n}$, the claimed properties are easily checked.
\end{proof}

\begin{lemma}\label{lem:Doob}
Let $Q_k$ be cubes, and $E_k\subset Q_k$ their subsets with $\abs{E_k}\geq\eta\abs{Q_k}$ for some $\eta\in(0,1)$.Let $\lambda_k\geq 0$ and $p\in[1,\infty)$. Then
\begin{equation*}
  \BNorm{\sum_{k=0}^\infty\lambda_k 1_{Q_k}}{p}
  \lesssim \frac{1}{\eta}\BNorm{\sum_{k=0}^\infty\lambda_k 1_{E_k}}{p}
\end{equation*}
For $A\geq 1$, the bound is also true with $A^d$ in place of $1/\eta$, if $E_k=\tilde Q_k$ is another cube with $\dist(Q_k,\tilde Q_k)\leq A\ell(Q_k)=A\ell(\tilde Q_k)$.
\end{lemma}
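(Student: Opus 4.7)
The plan is to establish this Doob-type inequality by duality combined with the Hardy--Littlewood maximal operator $M$. Since $p\in[1,\infty)$ implies $p'\in(1,\infty]$, we use
$$\BNorm{\sum_{k}\lambda_k 1_{Q_k}}{p}=\sup\Big\{\sum_k\lambda_k\int_{Q_k}g:g\geq 0,\ \Norm{g}{p'}\leq 1\Big\},$$
and fix such a test function $g$.

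For the main assertion, the hypothesis $\abs{Q_k}\leq\eta^{-1}\abs{E_k}$ lets us write $\int_{Q_k}g=\abs{Q_k}\fint_{Q_k}g\leq\eta^{-1}\abs{E_k}\fint_{Q_k}g$. Because $E_k\subset Q_k$, for every $x\in E_k$ the cube $Q_k$ is admissible in the supremum defining $Mg(x)$, so $Mg(x)\geq\fint_{Q_k}g$, and therefore $\abs{E_k}\fint_{Q_k}g\leq\int_{E_k}Mg$. Summing over $k$ and interchanging sum and integral yields
$$\sum_k\lambda_k\int_{Q_k}g\leq\eta^{-1}\int\Big(\sum_k\lambda_k 1_{E_k}\Big)Mg,$$
and H\"older's inequality followed by the $L^{p'}$-boundedness of $M$ (trivial at the endpoint $p'=\infty$, i.e.\ $p=1$, via $\Norm{Mg}{\infty}\leq\Norm{g}{\infty}$) completes the bound after taking the supremum over $g$.

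For the cube variant, let $Q_k^*$ be an axis-parallel cube containing both $Q_k$ and $\tilde Q_k$ with side length $\ell(Q_k^*)\eqsim A\ell(Q_k)$; such $Q_k^*$ exists because $\dist(Q_k,\tilde Q_k)\leq A\ell(Q_k)$ and $\ell(Q_k)=\ell(\tilde Q_k)$. Then $\abs{Q_k^*}\lesssim A^d\abs{\tilde Q_k}$, and for any $x\in\tilde Q_k\subset Q_k^*$ we have $\fint_{Q_k}g\leq(\abs{Q_k^*}/\abs{Q_k})\fint_{Q_k^*}g\lesssim A^d Mg(x)$. Multiplying by $\abs{Q_k}=\abs{\tilde Q_k}$ gives $\int_{Q_k}g\lesssim A^d\int_{\tilde Q_k}Mg$, and the remainder proceeds identically, with $A^d$ in place of $\eta^{-1}$.

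The argument is essentially routine; the only real ingredient is the pointwise-to-integrated translation $\fint_{Q_k}g\lesssim Mg(x)$ for $x\in E_k$ (direct when $E_k\subset Q_k$, and via an enlarged common cube $Q_k^*$ in the variant), after which duality and the Hardy--Littlewood maximal theorem close everything immediately. I do not expect any genuine obstacle beyond keeping track of the two cases and remembering to treat $p=1$ through the trivial $L^\infty$ bound on $M$.
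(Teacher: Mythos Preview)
Your proof is correct and matches the paper's approach essentially line for line: duality with $g\in L^{p'}$, the estimate $\int_{Q_k}g\leq\eta^{-1}\int_{E_k}Mg$, and the $L^{p'}$-boundedness of $M$; for the cube variant, both you and the paper pass through an enlarged cube $Q_k^*$ containing $Q_k$ and $\tilde Q_k$. The only cosmetic difference is that the paper phrases the second part as an application of the first (with $Q_k^*$ in place of $Q_k$ and $\tilde Q_k$ in place of $E_k$), whereas you unwind that step directly.
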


In the second claim, a more delicate argument could be given to improve the bound $A^d$ to $\log A$, but this is unnecessary for the present purposes.

\begin{proof}
Dualising the left side with $\phi\in L^{p'}$, we find that
\begin{equation*}
  \int\Big(\sum_{k=0}^\infty\lambda_k 1_{Q_k}\Big)\phi
  =\sum_{k=0}^\infty\lambda_k \int_{Q_k}\phi
  \leq\sum_{k=0}^\infty\lambda_k \frac{\abs{E_k}}{\eta} \fint_{Q_k}\phi
  =\frac{1}{\eta}\int\Big(\sum_{k=0}^\infty\lambda_k 1_{E_k}\Big) M\phi,
\end{equation*}
and the first claim by the boundedness of the maximal operator on $L^{p'}$.

For the second claim, let $Q_k^*$ be a cube that contains both $Q_k$ and $\tilde Q_k$, with $\ell(Q_k^*)\lesssim A\ell(Q_k)$. Then we first use the trivial bound $1_{Q_k}\leq 1_{Q_k^*}$, and then the first part of the lemma with $Q_k^*$ in place of $Q_k$, and $\tilde Q_k\subset Q_k^*$ in place of $E_k$, observing that $\abs{\tilde Q_k}\gtrsim A^{-d}\abs{Q_k^*}$ .
\end{proof}

\begin{proof}[Proof of Theorem \ref{thm:offDiag}]
We fix a cube $Q_0\subset\R^d$ and consider the quantity
\begin{equation*}
  C_R:=\sup\Big\{\Babs{\int_{Q_0} bf}:f\in L^\infty_0(Q_0),\Norm{f}{\infty}\leq R,\Norm{f}{r'}\leq 1\Big\}.
\end{equation*}
This has the trivial a priori upper bound $C_R\leq\Norm{b}{L^1(Q_0)}R<\infty$, since $b\in L^1_{\loc}(R^d)$, but we wish to deduce a bound independent of $R$. To this end, we fix an $f\in L^\infty_0(Q_0)$ and make the decomposition given by Lemma \ref{lem:dec3}. Then
\begin{equation*}
  \int_{Q_0}bf=\sum_{n=0}^N\int_{Q_0}bf_n=\sum_{n=0}^N\sum_{k=0}^\infty\int_{Q_{n,k}}bf_{n,k};
\end{equation*}
the last step follows since $bf_n=\sum_{k=0}^\infty bf_{n,k}$ is integrable and the terms $bf_{n,k}$ are disjointly supported.
For each $(k,n)$, we apply the decomposition of Lemma \ref{lem:dec2} to write
\begin{equation*}
  f_{n,k}=\sum_{i=1}^2(g_{n,k}^i T h_{n,k}^i-h_{n,k}^i T^* g_{n,k}^i)+\tilde{\tilde f}_{n,k},
\end{equation*}
where $\tilde{\tilde f}_{n,k}\in L^\infty_0(Q_{n,k})$, $g_{n,k}^i\in L^\infty(\tilde Q_{n,k})$ and $h_{n,k}^i \in L^\infty(Q_{n,k})$ for some cubes $\tilde Q_{n,k}$ of the same size as $Q_{n,k}$ and distance $\dist(Q_{n,k},\tilde Q_{n,k})\eqsim A\diam(Q_{n,k})$. In particular, the functions $\tilde{\tilde f}_{n,k}\in L^\infty_0(Q_{n,k})$ are again disjointly supported with respect to $k$, for each fixed $n$. Thus
\begin{equation*}
  \int_{Q_{n,k}}bf_{n,k}
  =\sum_{i=1}^2\int g_{n,k}^i[b,T]h_{n,k}^i+\int_{Q_{n,k}}b\tilde{\tilde f}_{n,k}.
\end{equation*}
Since both the left side and the second term on the right is summable over $k$, so is the first term on the right, and we have
\begin{equation*}
 \int_{Q_0}bf_n=\sum_{k=0}^\infty\sum_{i=1}^2\int g_{n,k}^i[b,T]h_{n,k}^i+\int_{Q_0}b\tilde{\tilde f}_n,\qquad 
  \tilde{\tilde f}_n:=\sum_{k=0}^\infty \tilde{\tilde f}_{n,k}.
\end{equation*}
Summing over $n=0,1,\ldots,N$, we further deduce that
\begin{equation*}
  \int_{Q_0}bf=\sum_{n=0}^N \sum_{k=0}^\infty\sum_{i=1}^2\int g_{n,k}^i[b,T]h_{n,k}^i + \int_{Q_0}b \tilde{\tilde f},\qquad
   \tilde{\tilde f}:=\sum_{n=0}^N \tilde{\tilde f}_n.
\end{equation*}
We notice that
\begin{equation*}
\begin{split}
  \abs{\tilde{\tilde f}}
  &\leq\sum_{n=0}^N\sum_{k=0}^\infty \abs{\tilde{\tilde f}_{n,k}}
  \leq\sum_{n=0}^N\sum_{k=0}^\infty \Norm{\tilde{\tilde f}_{n,k}}{\infty}1_{Q_{n,k}}
  \lesssim\eps_A\sum_{n=0}^N\sum_{k=0}^\infty \Norm{f_{n,k}}{\infty}1_{Q_{n,k}} \\
&  \lesssim\eps_A\sum_{n=0}^N\sum_{k=0}^\infty \ave{\abs{f_{n,k}}}_{Q_{n,k}}1_{Q_{n,k}} 
  \lesssim\eps_A Mf.
\end{split}
\end{equation*}
This pointwise maximal function bound proves both
\begin{equation*}
  \Norm{\tilde{\tilde f}}{\infty}\lesssim\eps_A\Norm{f}{\infty}\leq \eps_A R,\qquad
  \Norm{\tilde{\tilde f}}{r'}\lesssim\eps_A\Norm{f}{r'}\leq \eps_A,
\end{equation*}
so that
\begin{equation*}
  \Babs{\int_{Q_0}b \tilde{\tilde f}}\lesssim \eps_A C_R.
\end{equation*}

On the other hand, by the definition of convergent series, we have
\begin{equation*}
  \sum_{n=0}^N \sum_{k=0}^\infty\sum_{i=1}^2\int g_{n,k}^i[b,T]h_{n,k}^i 
  =\lim_{K\to\infty}\sum_{n=0}^N \sum_{k=0}^K\sum_{i=1}^2\int g_{n,k}^i[b,T]h_{n,k}^i.
\end{equation*}
Recalling that $g_{n,k}^i\in L^\infty(\tilde Q_{n,k})$ and $h_{n,k}^i \in L^\infty(Q_{n,k})$, where $\dist(Q_{n,k},\tilde Q_{n,k})\eqsim A\diam(Q_{n,k})=A\diam(\tilde Q_{n,k})$, the finite triple sum has exactly the form appearing in \eqref{eq:quiteWeakLpLq}, and we can estimate
\begin{equation*}
  \Babs{\sum_{n=0}^N \sum_{k=0}^K\sum_{i=1}^2\int g_{n,k}^i[b,T]h_{n,k}^i}
  \leq \Theta\BNorm{\sum_{n,k,i}\Norm{g_{n,k}^i}{\infty}1_{\tilde Q_{n,k}}}{q'}\BNorm{\sum_{n,k,i}\Norm{h_{n,k}^i}{\infty}1_{Q_{n,k}}}{p}.
\end{equation*}
Note that $g_{n,k}^i$ and $h_{n,k}^i$ appear in the decomposition of $f_{n,k}^i$ in a bilinear way so that we are free to multiply these functions by any $\alpha>0$ and $\alpha^{-1}$, respectively. In particular, since $1/r=1/q-1/p$ implies that $1/r'=1/q'+1/p$, we may arrange the bound
\begin{equation*}
   \Norm{g_{n,k}^i}{\infty}\Norm{h_{n,k}^i}{\infty}\lesssim A^d\Norm{f_{n,k}}{\infty}
   \lesssim A^d\ave{\abs{f}}_{Q_{n,k}}
\end{equation*}
into the form
\begin{equation*}
  \Norm{g_{n,k}^i}{\infty}\lesssim A^d \ave{\abs{f}}_{Q_{n,k}}^{r'/q'},\qquad\Norm{h_{n,k}^i}{\infty}\lesssim \ave{\abs{f}}_{Q_{n,k}}^{r'/p}.
\end{equation*}
Thus
\begin{equation*}
  \sum_{n,k,i}\Norm{h_{n,k}^i}{\infty}1_{Q_{n,k}}
  \lesssim\sum_{n,k}\ave{\abs{f}}_{Q_{n,k}}^{r'/p}1_{Q_{n,k}}.
\end{equation*}
At a fixed point $x\in Q_{N,k_N}\subset\ldots\subset Q_{1,k_1}\subset Q_0$, the averages $\ave{\abs{f}}_{Q_{n,k_n}}$ satisfy $\ave{\abs{f}}_{Q_{n+1,k_{n+1}}}>2\ave{\abs{f}}_{Q_{n,k_n}}$; thus $\ave{\abs{f}}_{Q_{n,k_n}}\leq 2^{n-N}\ave{\abs{f}}_{Q_{N,k_N}}$, and hence
\begin{equation*}
  \sum_{n=0}^N \ave{\abs{f}}_{Q_{n,k_n}}^{r'/p}\leq\sum_{n=0}^N 2^{(n-N)r'/p}\ave{\abs{f}}_{Q_{N,k_N}}^{r'/p}\lesssim\ave{\abs{f}}_{Q_{N,k_N}}^{r'/p}\leq (Mf(x))^{r'/p},
\end{equation*}
so that
\begin{equation*}
  \sum_{n,k}\ave{\abs{f}}_{Q_{n,k}}^{r'/p}1_{Q_{n,k}}\lesssim (Mf)^{r'/p},
\end{equation*}
and hence
\begin{equation*}
  \BNorm{\sum_{n,k,i}\Norm{h_{n,k}^i}{\infty}1_{Q_{n,k}}}{p}
  \lesssim\Norm{(Mf)^{r'/p}}{p}
  =\Norm{Mf}{r'}^{r'/p}\lesssim\Norm{f}{r'}^{r'/p}\leq 1.
\end{equation*}
For the similar term involving the $g_{n,k}^i$, we need in addition Lemma \ref{lem:Doob}:
\begin{equation*}
\begin{split}
  \BNorm{\sum_{n,k,i}\Norm{g_{n,k}^i}{\infty}1_{\tilde Q_{n,k}}}{q'}
  \lesssim A^d\BNorm{\sum_{n,k,i}\Norm{g_{n,k}^i}{\infty}1_{Q_{n,k}}}{q'}  
  \lesssim A^{2d}\BNorm{\sum_{n,k,i}\ave{f_{n,k}}_{Q_{n,k}}^{r'/q'}1_{Q_{n,k}}}{q'},
\end{split}
\end{equation*}
where, as before,
\begin{equation*}
  \BNorm{\sum_{n,k,i}\ave{f_{n,k}}_{Q_{n,k}}^{r'/q'}1_{Q_{n,k}}}{q'}
  \lesssim\Norm{(Mf)^{r'/q'}}{q'} 
  =\Norm{Mf}{r'}^{r'/q'}
  \lesssim\Norm{f}{r'}^{r'/q'}\leq 1.
\end{equation*}

Collecting the bounds, we have proved that
\begin{equation*}
\begin{split}
  \Babs{\int_{Q_0}bf}
  &\leq\lim_{K\to\infty}\Babs{\sum_{n=0}^N\sum_{k=0}^K\sum_{i=1}^2\int g_{n,k}^i[b,T]h_{n,k}^i}+\Babs{\int_{Q_0}b\tilde{\tilde f}} \\
  &\lesssim A^{2d}\Theta+\eps_A C_R
\end{split}
\end{equation*}
for all $f\in L^\infty_0(Q_0)$ with $\Norm{f}{r'}\leq 1$ and $\Norm{f}{\infty}\leq R$, and thus
\begin{equation*}
  C_R\lesssim A^{2d}\Theta+\eps_A C_R.
\end{equation*}
Fixing $A$ large enough so that $\eps_A\ll 1$, we can absorb the last term and conclude that $C_R\lesssim \Theta$.

Let $L^\infty_{c,0}(\R^d):=\bigcup_{Q\subset\R^d}L^\infty_0(Q)$. Since every $f\in L^\infty_{c,0}(\R^d)$ satisfies $\Norm{f}{\infty}\leq R$ for some $R$, we conclude that
\begin{equation*}
  \Babs{\int bf}\lesssim\Theta\Norm{f}{r'}\qquad \forall f\in L^\infty_{c,0}(\R^d).
\end{equation*}
As this is a dense subspace of $L^{r'}(\R^d)$, there exists a unique bounded linear functional $\Lambda\in (L^{r'}(\R^d))^*$ such that
\begin{equation*}
  \Norm{\Lambda}{(L^{r'}(\R^d))^*}\lesssim \Theta,\qquad\Lambda(f)=\int bf\quad\forall f\in L^\infty_0(\R^d).
\end{equation*}
By the Riesz representation theorem, such a $\Lambda\in (L^{r'}(\R^d))^*$ is represented by a unique function $a\in L^r(\R^d)$ of the same norm, and hence
\begin{equation*}
  \Norm{a}{r}\lesssim\Theta,\qquad\int af=\int bf\quad\forall f\in L^\infty_0(\R^d).
\end{equation*}

Let $\Delta:=b-a\in L^1_{\loc}(\R^d)$. We have $\int \Delta\cdot f=0$ for all $f\in L^\infty_0(\R^d)$. Taking $f=t^{-d}(1_{B(x,t)}-1_{B(y,t)})$ and letting $t\to 0$, we deduce that $\Delta(x)=\Delta(y)$ for all Lebesgue points $x$ and $y$ of $\Delta$. Thus $\Delta(x)\equiv c$ is a constant, and $b=a+c$ with $\Norm{a}{r}\lesssim\Theta$, as claimed.
\end{proof}

\section{Applications to the Jacobian operator}\label{sec:Jacobi}

We now discuss applications of the previous methods towards the problem of finding an unknown function $u$ with the prescribed Jacobian
\begin{equation*}
  Ju=\operatorname{det}\nabla u=\det(\partial_i u_j)_{i,j=1}^d=f.
\end{equation*}
The Jacobian equation has been quite extensively studied in the form of a Dirichlet boundary value problem in a bounded, sufficiently smooth domain $\Omega\subset\R^d$,
\begin{equation}\label{eq:JBVP}
  \begin{cases} Ju=f & \text{in }\Omega, \\ \phantom{J}u=g & \text{on }\partial\Omega. \end{cases}
\end{equation}
There are several works dealing with datum $f$ in H\"older \cite{DM} or Sobolev spaces \cite{Ye:1994}; in a different direction, a recent result \cite[Theorem 6.3]{KRW} addresses $f\in L^p(\Omega)$ with $p\in(\frac1d,1)$.

Our interest is in the conjecture of Iwaniec \cite{Iwaniec:Escorial} discussed in Section \ref{ss:intro-Jacobi}; besides being set on the full space $\R^d$, it deals with datum $f$ in the spaces $L^p(\R^d)$, $p\in(1,\infty)$, which fall in some sense ``between'' the higher regularity classes considered by \cite{DM,Ye:1994}, and the sub-integrability classes in \cite{KRW}. The closest analogue of our results in the existing literature is the Hardy space $H^1(\R^d)$ results of Coifman, Lions, Meyer and Semmes \cite{CLMS}.

\subsection{Norming properties of Jacobians}
We prove that the norm of a function $b$ in various function spaces can be computed by dualising against functions in the range of the Jacobian operator. The following lemma, a variant of considerations used in \cite[p. 263]{CLMS}, already gives a flavour of such results:

\begin{lemma}\label{lem:Jacobi}
Let $b\in L^1_{\loc}(\R^d)$. For each $q\in(1,\infty)$ we have
\begin{equation*}
  \fint_Q\abs{b-\ave{b}_Q}
  \lesssim\sup\Big\{\Babs{\fint_{2Q} b J(u)}: u\in C_c^\infty(2Q)^d,   
  \fint_{2Q}\abs{\nabla u}^{q}\leq 1\Big\}.
\end{equation*}
\end{lemma}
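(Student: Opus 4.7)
The plan is to dualise the left-hand side and realise the resulting test functions as Jacobians via a Bogovskii-type construction. By $L^1$--$L^\infty$ duality,
\begin{equation*}
\fint_Q\abs{b-\ave{b}_Q}\eqsim\sup\Big\{\Babs{\fint_Q b\phi}:\phi\in L^\infty_0(Q),\ \Norm{\phi}{\infty}\leq 1\Big\},
\end{equation*}
and by a standard density argument one may further assume $\phi\in C_c^\infty(Q)$ with $\int_Q\phi=0$. It then suffices to exhibit, for each such $\phi$, some $u\in C_c^\infty(2Q)^d$ with $\fint_{2Q}\abs{\nabla u}^q\lesssim 1$ and $\Babs{\fint_{2Q}bJu}\gtrsim\Babs{\fint_Q b\phi}$; the exact normalisation $\fint_{2Q}\abs{\nabla u}^q\leq 1$ will then follow from rescaling $u\mapsto\lambda u$ and the $d$-homogeneity $J(\lambda u)=\lambda^dJu$, at the cost of a dimensional constant.

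The first step invokes Bogovskii's solution operator for $\operatorname{div}$ on the (Lipschitz, star-shaped) cube $Q$: for mean-zero $\phi\in C_c^\infty(Q)$ with $\Norm{\phi}{\infty}\leq 1$ one obtains $F=(F_1,\ldots,F_d)\in C_c^\infty(Q)^d$ (smoothing within $W^{1,q}_0$ if necessary) solving $\operatorname{div}F=\phi$, with the quantitative bound $\Norm{\nabla F}{L^q(Q)}\lesssim\Norm{\phi}{L^q(Q)}\lesssim\abs{Q}^{1/q}$. Splitting $\phi=\sum_{j=1}^d\partial_jF_j$ and applying the triangle inequality reduces the problem to producing, for each $j$, a map $u^{(j)}\in C_c^\infty(2Q)^d$ with $\int_{2Q}bJu^{(j)}=\int_Q b\partial_jF_j$ and $\fint_{2Q}\abs{\nabla u^{(j)}}^q\lesssim 1$.

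I build $u^{(j)}$ by setting $u^{(j)}_j:=F_j$ (extended by $0$ to $2Q$) and $u^{(j)}_i(x):=\eta(x)(x_i-c_i)$ for $i\neq j$, where $c$ is the centre of $Q$ and $\eta\in C_c^\infty(2Q)$ is a cutoff with $\eta\equiv 1$ on $Q$ and $\abs{\nabla\eta}\lesssim r^{-1}$. On $Q$ the matrix $\nabla u^{(j)}$ has $\nabla F_j$ in row $j$ and the standard basis vector $e_i$ in every row $i\neq j$, so cofactor expansion gives $Ju^{(j)}=\partial_jF_j$ there; on $2Q\setminus Q$ the $j$-th row of $\nabla u^{(j)}$ vanishes (since $F_j\equiv 0$ there), whence $Ju^{(j)}\equiv 0$ and the identity $\int_{2Q}bJu^{(j)}=\int_Q b\partial_jF_j$ is immediate. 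Together with the trivial pointwise bounds $\abs{\nabla u^{(j)}}\lesssim 1$ on $2Q\setminus Q$ and $\abs{\nabla u^{(j)}}\lesssim 1+\abs{\nabla F_j}$ on $Q$, and the Bogovskii estimate, this yields $\fint_{2Q}\abs{\nabla u^{(j)}}^q\lesssim 1$. The main technical point is the production of a Bogovskii-type solution on the corner domain $Q$ with simultaneous $C_c^\infty$-preservation (or suitable $W^{1,q}_0$ approximation) together with the quantitative gradient bound; once this standard input is in hand, the rest is a direct determinant calculation and Hölder's inequality.
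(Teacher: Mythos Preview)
Your proof is correct and follows essentially the same route as the paper: dualise against a mean-zero bounded test function, solve $\operatorname{div}F=\phi$ with Bogovskii-type bounds, then realise $\partial_jF_j$ as a Jacobian by placing $F_j$ in one component and truncated coordinate functions $\eta(x)(x_i-c_i)$ in the remaining $d-1$ components. The only cosmetic differences are that the paper restricts the test function to a slightly smaller cube $(1-\delta)Q$ before solving $\operatorname{div}$ (and mollifies afterwards), whereas you invoke density in $C_c^\infty(Q)$ first and then apply Bogovskii directly; and the paper selects a single index $k$ by pigeonhole while you sum over all $j$ via the triangle inequality---both give the claim up to a dimensional constant.
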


\begin{proof}
We can find $g\in L^\infty_0(Q')$, supported in a slightly smaller cube $Q'=(1-\delta)Q$, and with $\Norm{g}{\infty}\leq 1$ such that
\begin{equation}\label{eq:easyDual}
   \fint_Q\abs{b-\ave{b}_Q}\lesssim\Babs{\fint_Q(b-\ave{b}_Q)g}=\Babs{\fint_Q bg}.
\end{equation}
Now $g\in L^q_0(Q')$ for every $q\in(1,\infty)$. By \cite[Lemma II.2.1.1]{Sohr:NSE}, we can find at least one $v\in W^{1,q}_0(Q')^d$ (Sobolev space with zero boundary values) satisfying
\begin{equation*}
  \operatorname{div}v=g,\qquad\Norm{\nabla v}{q}\lesssim \Norm{g}{q}. 
\end{equation*}
In fact, \cite[Lemma II.2.1.1]{Sohr:NSE} proves this with an unspecified dependence on the cube (or more generally, a Lipschitz domain) $Q'$; we apply this in the unit cube $Q_0$ first, and then obtain the stated estimate in an arbitrary cube by a change of variables.
So we have
\begin{equation*}
  \fint_Q\abs{b-\ave{b}_Q}\lesssim\Babs{\fint_Q b\operatorname{div}v},\qquad v\in W^{1,q}_0(Q')^d,\qquad \fint_Q\abs{\nabla v}^q\lesssim 1.
\end{equation*}
If we now replace $v$ by a standard mollification $\phi_\eps*v$ and note that $\nabla(\phi_\eps*v)=\phi_\eps*\nabla v$, we observe that the above display remains valid for small enough $\eps>0$, except that $v\in W^{1,q}_0(Q')^d$ is replaced by $\phi_\eps*v\in C_c^\infty(Q)^d$. We now proceed with this replacement, writing $w=\phi_\eps*v$.

Next, at least one of the integrals $\int_Q b\partial_k w_k$, $k=1,\ldots,d$, has to be at least as big as their average $d^{-1}\fint_Q b\operatorname{div}w$, so in fact
\begin{equation*}
  \fint_Q\abs{b-\ave{b}_Q}\lesssim\Babs{\fint_Q b\partial_k w_k},\qquad
  w_k\in C_c^\infty(Q),\qquad\fint_Q\abs{\nabla w_k}^q\lesssim 1.
\end{equation*}

We now define a vector-valued function $u=(u_i)_{i=1}^d\in C_c^\infty(2Q)$ as follows. For $i=k$, let $u_k=w_k$. For all $i\neq k$, let $u_i(x)=(x_i-c_i)\varphi_Q(x)$, where $c$ is the centre of $Q$, we write $x_i$ (resp. $c_i$) for the $i$th component of $x$ (resp. $c$), and $\varphi_Q\in C_c^\infty(2Q)$ is a usual bump such that $1_Q\leq\varphi_Q\leq 1_{2Q}$ and $\abs{\nabla\varphi_Q}\lesssim 1/\ell(Q)$. Then
\begin{equation*}
  \nabla u_i(x)=e_i\varphi_Q(x)+(x_i-c_i)\nabla\varphi_Q(x),\qquad
  \abs{\nabla u_i(x)}\lesssim 1_{2Q}(x),
\end{equation*}
where $e_i$ is the $i$th coordinate vector.
Thus $\fint_{2Q}\abs{\nabla u_i}^{q}\lesssim 1$ for $i\neq k$, and we already knew this for $i=k$. Since $u_k=w_k$ is compactly supported inside $Q$, so is $J(u)$, and for $x\in Q$, we simply have $\nabla u_i(x)=e_i$ for $i\neq k$. Hence
\begin{equation*}
  J(u)(x)=\det(\partial_i u_j(x))_{i,j=1}^d=\sum_{\sigma\in S_d}\sign(\sigma)\prod_{i\neq k}\delta_{i,\sigma(i)}\times\partial_{\sigma(k)}w_k(x)
  =\partial_k w_k(x),
\end{equation*}
since only the identity permutation gives a contribution. We have shown that
\begin{equation*}
   \fint_Q\abs{b-\ave{b}_Q}
  \lesssim\Babs{\fint_Q b\partial_k w_k}
  =\Babs{\fint_Q bJ(u)},
\end{equation*}
for a certain $u\in C_c^\infty(2Q)^d$ such that $\fint_{2Q}\abs{\nabla u}^{q}\lesssim 1$, and this proves the lemma.
\end{proof}

For the passage from the local estimate of Lemma \ref{lem:Jacobi} to global function space norms, we need two further lemmas that have nothing to do with the Jacobian, and will also be used in the next section.

\begin{lemma}\label{lem:Lerner}
Let $b\in L^1_{\loc}(\R^d)$ and let $Q_0\subset\R^d$ be a cube. Then there is collection $\mathcal Q$ of dyadic subcubes of $Q_0$ such that, at almost every $x\in Q_0$,
\begin{equation*}
  1_{Q_0}(x)\abs{b-\ave{b}_{Q_0}}\lesssim\sum_{Q\in\mathcal Q} 1_Q(x)\fint_Q\abs{b-\ave{b}_Q},
\end{equation*}
and $\mathcal Q$ is sparse in the sense that each $Q\in\mathcal Q$ has a major subset $E(Q)$ such that $\abs{E(Q)}\geq\frac12\abs{Q}$ and the subsets $E(Q)$ are pairwise disjoint.
\end{lemma}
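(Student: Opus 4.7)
The plan is to carry out a Calder\'on--Zygmund-type stopping time construction inside $Q_0$, similar in spirit to (but distinct from) the one used in Lemma \ref{lem:dec3}, now tailored to the mean oscillation $\fint_F|b-\ave{b}_F|$ rather than to the averages of $|f|$. The family $\mathcal Q$ will be built by iteratively selecting maximal dyadic subcubes where this oscillation exceeds a fixed multiple of the ambient one, exactly so that a standard Calder\'on--Zygmund bound produces the major subsets required for sparseness.

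Concretely, I would set $\mathcal F_0:=\{Q_0\}$ and, for each $F\in\mathcal F_n$, define
\begin{equation*}
  \operatorname{ch}_{\mathcal F}F:=\Big\{Q\in\mathscr D(F)\text{ maximal with }\fint_Q|b-\ave{b}_F|>2\fint_F|b-\ave{b}_F|\Big\}.
\end{equation*}
Then $\mathcal F_{n+1}:=\bigcup_{F\in\mathcal F_n}\operatorname{ch}_{\mathcal F}F$, $\mathcal Q:=\mathcal F:=\bigcup_n\mathcal F_n$, and $E(F):=F\setminus\bigcup_{F'\in\operatorname{ch}_{\mathcal F}F}F'$. The sparseness $|E(F)|\geq\tfrac12|F|$ follows by summing $\abs Q\leq(2\fint_F|b-\ave{b}_F|)^{-1}\int_Q|b-\ave{b}_F|$ over the pairwise-disjoint maximal $Q\in\operatorname{ch}_{\mathcal F}F$, and the sets $E(F)$ are mutually disjoint across $\mathcal F$ by the nested structure.

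For the pointwise domination, fix a Lebesgue point $x\in Q_0$ and consider the unique maximal descending chain $Q_0=F_0\supsetneq F_1\supsetneq\cdots$ in $\mathcal F$ containing $x$. Since $\abs{F_{k+1}}\leq\tfrac12\abs{F_k}$, the set where this chain is infinite has measure zero, so for a.e.\ $x$ there is a last $F_n$ with $x\in E(F_n)$. Every dyadic $Q\in\mathscr D(F_n)$ containing $x$ then fails the stopping condition, so Lebesgue differentiation yields $\abs{b(x)-\ave{b}_{F_n}}\leq 2\fint_{F_n}|b-\ave{b}_{F_n}|$. For each $1\leq k\leq n$, the dyadic parent $\hat F_k$ of $F_k$ inside $F_{k-1}$ is non-stopping, so
\begin{equation*}
  \abs{\ave{b}_{F_k}-\ave{b}_{F_{k-1}}}\leq\fint_{F_k}|b-\ave{b}_{F_{k-1}}|\leq 2^d\fint_{\hat F_k}|b-\ave{b}_{F_{k-1}}|\leq 2^{d+1}\fint_{F_{k-1}}|b-\ave{b}_{F_{k-1}}|.
\end{equation*}
Telescoping $b(x)-\ave{b}_{Q_0}=(b(x)-\ave{b}_{F_n})+\sum_{k=1}^n(\ave{b}_{F_k}-\ave{b}_{F_{k-1}})$ will then give
\begin{equation*}
  \abs{b(x)-\ave{b}_{Q_0}}\lesssim\sum_{k=0}^n\fint_{F_k}|b-\ave{b}_{F_k}|\leq\sum_{Q\in\mathcal Q,\,x\in Q}\fint_Q|b-\ave{b}_Q|,
\end{equation*}
as required.

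The only real subtlety I anticipate is the telescoping step: the direct control of $\abs{\ave{b}_{F_k}-\ave{b}_{F_{k-1}}}$ is by the \emph{mixed} quantity $\fint_{F_k}|b-\ave{b}_{F_{k-1}}|$, whereas the target sparse sum is indexed by each cube's \emph{own} oscillation. Routing through the non-stopping parent $\hat F_k$ and enlarging to $F_{k-1}$ converts the mixed quantity into a clean oscillation on $F_{k-1}$, at the cost of a dimensional constant; this is the crucial bookkeeping that makes the right-hand side land in the desired form.
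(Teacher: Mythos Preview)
Your proposal is correct and follows essentially the same approach as the paper: the identical stopping-time selection (maximal dyadic $Q\subset F$ with $\fint_Q|b-\ave b_F|>2\fint_F|b-\ave b_F|$), the same Calder\'on--Zygmund measure bound for sparseness, and the same telescoping/parent-cube estimate for the pointwise domination. The paper packages the last step as iterating the identity $1_F(b-\ave b_F)=1_{E(F)}(b-\ave b_F)+\sum_{F'}1_{F'}(\ave b_{F'}-\ave b_F)+\sum_{F'}1_{F'}(b-\ave b_{F'})$ rather than following a chain at a fixed point, but this is only a presentational difference; your one imprecise line (``since $|F_{k+1}|\le\tfrac12|F_k|$'') is justified by the stronger fact $\sum_{F\in\mathcal F_n}|F|\le 2^{-n}|Q_0|$, which you have already proved.
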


\begin{proof}
This is a more elementary variant of Lerner's oscillation formula \cite{Lerner:formula}; we recall the idea of the proof. For any disjoint subcubes $Q^1_j$ of $Q_0$, we have
\begin{equation}\label{eq:basicSplit}
\begin{split}
  1_{Q_0}(b-\ave{b}_{Q_0})
  &=1_{Q_0\setminus\bigcup_j Q^1_j}(b-\ave{b}_{Q_0}) \\
  &\qquad+\sum_j 1_{Q^1_j}(\ave{b}_{Q^1_j}-\ave{b}_{Q_0})+\sum_j 1_{Q^1_j}(b-\ave{b}_{Q^1_j}).
\end{split}
\end{equation}
If the $Q^1_j$ are chosen to be the maximal dyadic subcubes $Q\subset Q_0$ such that
\begin{equation*}
  \fint_{Q}\abs{b-\ave{b}_{Q_0}}>2\fint_{Q_0}\abs{b-\ave{b}_{Q_0}},
\end{equation*}
then $\sum_j\abs{Q^1_j}\leq\frac12\abs{Q_0}$ so that $E(Q_0)=Q_0\setminus\bigcup_j Q^1_j$ qualifies for a major subset. Moreover, the sum of the first two terms on the right of \eqref{eq:basicSplit} is dominated by $1_{Q_0}\fint_{Q_0}\abs{b-\ave{b}_{Q_0}}$ and the last term is a sum over disjointly supported terms of the same form as where we started, and we can iterate.
\end{proof}

We borrow the following observation from \cite[Remark 2.4]{DHKY}:

\begin{lemma}\label{lem:loc2glob}
Suppose that $b\in L^r_{\loc}(\R^d)$, $r\in[1,\infty)$, satisfies
\begin{equation*}
  \Norm{b-\ave{b}_Q}{L^r(Q)}\leq \Theta
\end{equation*}
for every cube $Q\subset\R^d$. Then $b=a+c$, where $c$ is a constant, $a\in L^r(\R^d)$, and
\begin{equation*}
  \Norm{a}{L^r(\R^d)}\leq \Theta.
\end{equation*}
\end{lemma}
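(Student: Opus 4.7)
The plan is to construct the constant $c$ as a limit of averages of $b$ over cubes exhausting $\R^d$. Fix any exhausting sequence of cubes centred at the origin, say $Q_k := [-2^k, 2^k]^d$. For $k<\ell$, the inclusion $Q_k\subset Q_\ell$ together with H\"older's inequality and the hypothesis gives
\begin{equation*}
  |\ave{b}_{Q_k}-\ave{b}_{Q_\ell}|
  \leq\fint_{Q_k}|b-\ave{b}_{Q_\ell}|
  \leq |Q_k|^{-1/r}\bNorm{b-\ave{b}_{Q_\ell}}{L^r(Q_\ell)}
  \leq |Q_k|^{-1/r}\Theta.
\end{equation*}
Since $|Q_k|^{-1/r}=2^{-kd/r}\to 0$, the sequence $\{\ave{b}_{Q_k}\}$ is Cauchy in $\C$. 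Let $c:=\lim_k\ave{b}_{Q_k}$ and set $a:=b-c\in L^1_{\loc}(\R^d)$.

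Next, I would verify the uniform local bound $\Norm{a}{L^r(Q)}\leq\Theta$ for every cube $Q\subset\R^d$. Given such a $Q$, choose $k$ large enough that $Q\subset Q_k$. By the triangle inequality,
\begin{equation*}
  \Norm{a}{L^r(Q)}
  \leq \bNorm{b-\ave{b}_{Q_k}}{L^r(Q)}+|\ave{b}_{Q_k}-c|\cdot|Q|^{1/r}
  \leq \Theta+|\ave{b}_{Q_k}-c|\cdot|Q|^{1/r},
\end{equation*}
where we used monotonicity of the $L^r$ norm in the integration domain together with the hypothesis applied to $Q_k$. Letting $k\to\infty$ eliminates the second term, yielding the claimed bound $\Norm{a}{L^r(Q)}\leq\Theta$ uniformly in $Q$.

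Finally, I would pass from the uniform local bound to the global bound by monotone convergence: applying it to $|a|^r\cdot 1_{Q_k}$ as $Q_k\nearrow\R^d$ gives $\Norm{a}{L^r(\R^d)}=\lim_k\Norm{a}{L^r(Q_k)}\leq\Theta$. This produces the decomposition $b=a+c$ with the claimed properties. There is no serious obstacle here; the only point requiring any care is the quantitative Cauchy estimate for the averages, which is where the finiteness $r<\infty$ enters (through the factor $|Q_k|^{-1/r}\to 0$), and this is precisely what the hypothesis supplies.
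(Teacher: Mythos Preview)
Your proof is correct and follows essentially the same route as the paper: both arguments show that the averages $\ave{b}_{Q_k}$ over an exhausting chain of cubes form a Cauchy sequence, define $c$ as the limit, and then pass to the global $L^r$ bound. The only cosmetic differences are that your Cauchy estimate has constant $1$ rather than $2$ (you apply the hypothesis once to the larger cube, whereas the paper splits into two terms), and you obtain the global bound via monotone convergence through a uniform local estimate, while the paper applies Fatou's lemma directly to $1_{Q_n}\abs{b-\ave{b}_{Q_n}}^r$.
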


\begin{proof}
Let us consider a sequence of cubes $Q_0\subset Q_1\subset\ldots$ with $\bigcup_{n=0}^\infty Q_n=\R^d$. For $m\leq n$, we have
\begin{equation*}
  \abs{\ave{b}_{Q_n}-\ave{b}_{Q_m}}
  =\abs{(b(x)-\ave{b}_{Q_m})-(b(x)-\ave{b}_{Q_n})}
\end{equation*}
and hence, taking the $L^r$ average over $x\in Q_m$,
\begin{equation*}
\begin{split}
  \abs{\ave{b}_{Q_n}-\ave{b}_{Q_m}}
  &\leq\abs{Q_m}^{-1/r}(\Norm{b-\ave{b}_{Q_m}}{L^r(Q_m)}+\Norm{b-\ave{b}_{Q_n}}{L^r(Q_m)}) \\ 
  &\leq \abs{Q_m}^{-1/r}(\Theta+\Norm{b-\ave{b}_{Q_n}}{L^r(Q_n)}) \leq 2\Theta\abs{Q_m}^{-1/r}.
\end{split}
\end{equation*}
Thus $(\ave{b}_{Q_n})_{n=0}^\infty$ is a Cauchy sequence, and hence converges to some $c$. We conclude by Fatou's lemma that
\begin{equation*}
  \int_{\R^d}\abs{b-c}^r
  =\int_{\R^d}\lim_{n\to\infty}1_{Q_n}\abs{b-\ave{b}_{Q_n}}^r
  \leq\liminf_{n\to\infty}\int_{Q_n}\abs{b-\ave{b}_{Q_n}}^r
  \leq \Theta^r.\qedhere
\end{equation*}
\end{proof}

We are now ready for the main result of this section:

\begin{theorem}\label{thm:equiNormJu}
Let $b\in L^1_{\loc}(\R^d)$, let $r_i\in(1,\infty)$ for $i=1,\ldots,d$, and $\displaystyle \frac1r:=\sum_{i=1}^d\frac{1}{r_i}$.
Then
\begin{equation}\label{eq:supJu}
  \Gamma:=\sup\Big\{\Babs{\int b J(u)}:u=(u_i)_{i=1}^d\in C_c^\infty(\R^d)^d, \Norm{\nabla u_i}{r_i}\leq 1\ \forall i=1,\ldots,d\Big\}
\end{equation}
is finite, if and only if
\begin{itemize}
  \item $r=1$ and $b\in\BMO(\R^d)$, or
  \item $r\in [\frac{d}{d+1},1)$ and $b$ is $d(\frac1r-1)$-H\"older continuous, or
  \item $r<\frac{d}{d+1}$ and $b$ is constant, or
  \item $r>1$ and $b=a+c$, where $c$ is constant and $a\in L^{r'}(\R^d)$.
\end{itemize}
Moreover, in each case the respective function space norm is comparable to \eqref{eq:supJu}.
\end{theorem}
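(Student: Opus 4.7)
The proof establishes the four equivalences by verifying both directions. For the sufficient direction ($\Gamma$ finite given the function-space membership), classical tools suffice: for $r > 1$, H\"older's inequality yields $\Norm{J(u)}{L^r} \lesssim \prod_i \Norm{\nabla u_i}{L^{r_i}}$, and the vanishing of $\int J(u)$ (since $J(u)$ is a divergence of a smooth, compactly supported vector field) allows pairing with $a \in L^{r'}$; for $r = 1$, the Coifman--Lions--Meyer--Semmes theorem gives $J(u) \in H^1(\R^d)$ with the corresponding product bound, and $H^1$--$\BMO$ duality closes the estimate; for $r \in [\frac{d}{d+1}, 1)$ the same method applies with the $H^r$--$\dot C^{0,\alpha}$ duality ($\alpha = d(1/r-1)$); for $r < \frac{d}{d+1}$, a constant $b$ trivially gives $\int b\,J(u) = 0$.

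For the necessary direction I first apply Lemma \ref{lem:Jacobi} to an arbitrary cube $Q$ with the parameter $q := \max_i r_i$, producing a test $u \in C_c^\infty(2Q)^d$ with $\fint_{2Q} |\nabla u|^q \leq 1$ and $\fint_Q |b - \ave{b}_Q| \lesssim |\fint_{2Q} b\,J(u)|$. H\"older's inequality (with $r_i \leq q$) applied to $\nabla u$ supported in $2Q$ gives $\Norm{\nabla u_i}{L^{r_i}} \lesssim |Q|^{1/r_i}$, so the rescaled field $\lambda_i u_i$ with $\lambda_i := 1/\Norm{\nabla u_i}{L^{r_i}}$ is admissible in the definition of $\Gamma$ and satisfies $\prod_i \lambda_i \gtrsim |Q|^{-1/r}$. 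This yields $|\int b\,J(u)| \leq \Gamma/\prod_i \lambda_i \lesssim \Gamma |Q|^{1/r}$ and hence the basic mean oscillation estimate
\[
\fint_Q |b - \ave{b}_Q| \lesssim \Gamma\,\ell(Q)^{d(1/r-1)}
\]
for every cube $Q \subset \R^d$. For $r \leq 1$ this already gives the conclusion: $r = 1$ is precisely the definition of $\BMO$ with $\Norm{b}{\BMO} \lesssim \Gamma$, and for $r < 1$ the chain-of-averages argument recalled at the end of the proof of Theorem \ref{thm:pleq1} produces either $\Norm{b}{\dot C^{0,\alpha}} \lesssim \Gamma$ (when $\alpha := d(1/r-1) \leq 1$) or $b$ constant (when $\alpha > 1$).

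The hard case is $r > 1$, where the $L^1$ oscillation bound $\fint_Q |b - \ave{b}_Q| \lesssim \Gamma |Q|^{-1/r'}$ is not enough and must be upgraded to the $L^{r'}$ version $\Norm{b - \ave{b}_Q}{L^{r'}(Q)} \lesssim \Gamma$, after which Lemma \ref{lem:loc2glob} produces the required decomposition $b = a + c$ with $a \in L^{r'}(\R^d)$ and $\Norm{a}{L^{r'}} \lesssim \Gamma$. My plan for the upgrade is to adapt the approximate weak factorization argument of Theorem \ref{thm:offDiag}. Fix a cube $Q_0$ and set
\[
C_R := \sup\Big\{\Babs{\int_{Q_0} bf} : f \in L^\infty_0(Q_0),\ \Norm{f}{L^r} \leq 1,\ \Norm{f}{\infty} \leq R\Big\},
\]
which is a priori finite by $b \in L^1_{\loc}$. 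For each admissible $f$, decompose $f = \sum_{n,k} f_{n,k}$ via the stopping-time Lemma \ref{lem:dec3}, obtaining pieces supported on a sparse family of cubes $Q_{n,k}$ with $\Norm{f_{n,k}}{\infty} \lesssim \ave{|f|}_{Q_{n,k}}$. To each $f_{n,k}$ I apply the Sohr-lemma construction of Lemma \ref{lem:Jacobi}: one solves $\operatorname{div} v^{(n,k)} = f_{n,k}$ with $\Norm{\nabla v^{(n,k)}}{L^q} \lesssim \Norm{f_{n,k}}{L^q}$, and for each coordinate $k' = 1,\ldots,d$ builds test fields $u^{(n,k;k')}$ with $(u^{(n,k;k')})_{k'}$ a mollification of $v^{(n,k)}_{k'}$ and the remaining components $(x_j - c_{Q_{n,k},j})\varphi_{Q_{n,k}}$, so that $\sum_{k'} J(u^{(n,k;k')}) = f_{n,k}$ up to a mollification error and $\prod_j \Norm{\nabla u^{(n,k;k')}_j}{L^{r_j}} \lesssim \ave{|f|}_{Q_{n,k}} |Q_{n,k}|^{1/r}$. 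Summing the resulting $\Gamma$-bounded contributions using the multi-exponent H\"older relation $\sum_j r/r_j = 1$, the sparse Carleson estimate $\sum_{(n,k)} \ave{|f|}_{Q_{n,k}}^s |E(Q_{n,k})| \lesssim \Norm{f}{L^s}^s$, and a Doob-type maximal-function bound as in Lemma \ref{lem:Doob}, the mollification error produces a term $\eps_A C_R$ that is absorbed for $A$ large, yielding $C_R \lesssim \Gamma$ uniformly in $R$ and hence the target $L^{r'}$ oscillation bound.

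The principal technical difficulty is arranging the multi-exponent summation so that it closes uniformly in the truncation parameter $R$: this requires a careful simultaneous coordination, in the spirit of Theorem \ref{thm:offDiag}, of Sohr's lemma (whose exponent is chosen adapted to the coordinate $k'$), the sparse structure of the stopping cubes, and the maximal-function estimates, but now across all $d$ coordinate directions of the $d$-linear Jacobian structure rather than across the two factors of a commutator.
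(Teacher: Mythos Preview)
Your treatment of the ``only if'' direction for $r\le 1$ and most of the ``if'' direction is fine. Two genuine problems remain.

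\textbf{The ``if'' direction at the endpoint $r=\tfrac{d}{d+1}$.} The $H^r$--$\dot C^{0,\alpha}$ duality argument breaks down here: $J(u)$ need not belong to $H^{d/(d+1)}(\R^d)$, since it generally fails the first-moment conditions $\int x_i\,J(u)=0$ required of $H^{d/(d+1)}$-atoms (any $\partial_k w$ with $w\in C_c^\infty$ arises as a Jacobian, and $\int x_k\partial_k w=-\int w$). The paper instead writes $J(u)=\nabla u_1\cdot\sigma$ with $\operatorname{div}\sigma=0$ and reduces $\int b\,J(u)$ to a Riesz-transform commutator $\int [b,R]f\cdot\sigma$, so that Theorem~\ref{thm:summary} covers all of $r\in[\tfrac{d}{d+1},1]$ at once.

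\textbf{The ``only if'' direction for $r>1$.} Your scheme applies $\Gamma$ separately to each Jacobian built from a piece $f_{n,k}$ and then sums. This produces, up to constants, the quantity $\sum_{n,k}\ave{|f|}_{Q_{n,k}}\,|Q_{n,k}|^{1/r}$, which is \emph{not} controlled by $\|f\|_{L^r}$: for $f$ supported on $K$ well-separated congruent cubes with $\|f\|_{L^r}=1$, the deepest stopping layer already contributes $\sim K^{1/r'}$. The sparse Carleson embedding you cite bounds $\sum_Q\ave{|f|}_Q^{\,r}|Q|$, not $\sum_Q\ave{|f|}_Q|Q|^{1/r}$, and the multi-exponent H\"older relation does not convert one into the other. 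Nor is there an ``$\eps_A C_R$'' absorption available: Sohr's lemma is exact, so the mollification error is driven to zero independently and cannot rescue the divergent main sum.

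The paper's remedy is to avoid term-by-term estimation entirely. It starts from the sparse domination of $1_{Q_0}|b-\ave{b}_{Q_0}|$ (Lemma~\ref{lem:Lerner}, not Lemma~\ref{lem:dec3}), dualises with $\phi\in L^r$, applies Lemma~\ref{lem:Jacobi} on each sparse cube to get $u^k$, and then merges all of them into a \emph{single} Jacobian by randomising with independent $d$th roots of unity $\zeta_k$: the identity $\Exp\prod_{j=1}^d\zeta_{k_j}=\delta_{k_1=\cdots=k_d}$ gives
\[
  \sum_k\lambda_k\int b\,J(u^k)=\Exp\int b\,J\Big(\sum_k\zeta_k\lambda_k^{r/r_1}u^k_1,\ldots,\sum_k\zeta_k\lambda_k^{r/r_d}u^k_d\Big),
\]
so $\Gamma$ is invoked once, and the resulting norms $\bigl\|\sum_k\lambda_k^{r/r_i}|\nabla u^k_i|\bigr\|_{r_i}$ are then handled by the sparse and maximal-function machinery (this last step is where one needs $q>\max_i r_i$ strictly, not $q=\max_i r_i$). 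This $d$-linear randomisation, the analogue for the Jacobian of the random-sign trick for bilinear commutators, is the idea missing from your proposal.
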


\begin{proof}
Let us first consider the ``if'' directions. The constant cases follow from the fact that $\int J(u)=0$, and it is immediate from H\"older's inequality that
\begin{equation*}
  \Babs{\int bJ(u)}\lesssim\Norm{b}{r'}\prod_{i=1}^d\Norm{\nabla u_i}{r_i},\qquad r>1.
\end{equation*}

We then deal with $r\in[\frac{d}{d+1},1]$. Let us first check that there is at least one $k$ such that $1/r-1/r_k<1$. Suppose for contradiction that we have $1/r-1/r_k\geq 1$ for all $k=1,\ldots,d$. Summing over $k$, this gives $d/r-1/r\geq d$, and thus $r\leq \frac{d-1}{d}$. But we are also assuming that $\frac{d}{d+1}\leq r$, thus $d^2\leq(d-1)(d+1)=d^2-1$, a contradiction. Without loss of generality, we may assume that $k=1$, thus
\begin{equation*}
  \frac{1}{s}:=\sum_{i=2}^d\frac{1}{r_i}=\frac{1}{r}-\frac{1}{r_1}\in(0,1)
\end{equation*}
so that $s\in(1,\infty)$. We can then write
\begin{equation*}
  J(u)=\nabla u_1\cdot\sigma= Rf\cdot\sigma,
\end{equation*}
where $\sigma\in C_c^\infty(\R^d)^d$ satisfies $\operatorname{div}\sigma =0$ and
\begin{equation*}
  \Norm{\sigma}{s}\lesssim\prod_{i=2}^d\Norm{\nabla u_i}{r_i}\leq 1, 
\end{equation*}
and $R=(R_i)_{i=1}^d=\nabla(-\Delta)^{-1/2}$ is the vector of the Riesz transforms, and finally $f=(-\Delta)^{1/2}u_1$ satisfies $\Norm{f}{r_1}\lesssim\Norm{\nabla u_1}{r_1}\leq 1$. Then
\begin{equation*}
  -\int R(bf)\cdot\sigma
  =\int bf (R\cdot \sigma)=\int bf(-\Delta)^{-1/2}\operatorname{div}\sigma=0,
\end{equation*}
and thus
\begin{equation*}
\begin{split}
  \Babs{\int bJ(u)}
  =\Babs{\int b Rf\cdot\sigma}
  =\Babs{\int [b,R]f\cdot\sigma}
  \leq\Norm{[b,R]}{L^{r_1}\to L^{s'}}\Norm{f}{r_1}\Norm{\sigma}{s}.
\end{split}
\end{equation*}
The last two norms are bounded by one, and $1/s'=1-1/r+1/r_1$, so that
\begin{equation*}
  \Gamma\leq\Norm{[b,R]}{L^{r_1}\to L^{s'}}\lesssim\begin{cases} \Norm{b}{\BMO}, & \text{if } r=1, \\ \Norm{b}{\dot C^{0,\alpha}}, & \text{if }\alpha:=d(\frac{1}{r_1}-\frac{1}{s'})=d(\frac{1}{r}-1)\in(0,1] \end{cases}
\end{equation*}
by Theorem \ref{thm:summary}.

We turn to the ``only if'' parts of the theorem. Recall the definition of $\Gamma$ from \eqref{eq:supJu}.
We apply Lemma \ref{lem:Jacobi} with some $q>\max_{i=1,\ldots,d}r_i$. If $Q\subset\R^d$ is any cube, then for some $u\in C_c^\infty(2Q)^d$ with $\fint_{2Q}\abs{\nabla u}^q\leq 1$ we have
\begin{equation}\label{eq:JuStep}
\begin{split}
   \fint_Q\abs{b-\ave{b}_Q}&\lesssim\Babs{\fint_{2Q} b J(u)}
  \lesssim\frac{\Gamma}{\abs{Q}}\prod_{i=1}^d\Norm{\nabla u_i}{r_i} \\
  &\lesssim \Gamma\frac{\abs{Q}^{1/r}}{\abs{Q}}\prod_{i=1}^d\Big(\fint_{2Q}\abs{\nabla u_i}^{r_i}\Big)^{1/r_i} \\
  &\lesssim \Gamma\ell(Q)^{d(1/r-1)}\prod_{i=1}^d\Big(\fint_{2Q}\abs{\nabla u_i}^{q}\Big)^{1/q}
  \leq \Gamma\ell(Q)^{d(1/r-1)}.
  \end{split}
\end{equation}
If $r=1$, this is precisely the condition that $\Norm{b}{\BMO}\lesssim \Gamma$. For $r<1$, the conclusion follows as in the proof of Theorem \ref{thm:pleq1}.

Let us then consider $r>1$. Let $Q_0\subset\R^d$ be an arbitrary cube. We apply Lemma \ref{lem:Lerner} and monotone convergence to see that
\begin{equation*}
  \Norm{b-\ave{b}_{Q_0}}{L^{r'}(Q_0)}
  \lesssim\lim_{N\to\infty}\BNorm{\sum_{k=1}^N 1_{Q_k}\fint_{Q_k}\abs{b-\ave{b}_{Q_k}}}{L^{r'}(Q_0)}
\end{equation*}
where $\{Q_k\}_{k=1}^\infty$ is an enumeration of the collection $\mathcal Q$ given by Lemma \ref{lem:Lerner}.

We then dualise with some $\Norm{\phi}{r}\leq 1$, and apply  just the first step of \eqref{eq:JuStep} to each $Q_k$ in place of $Q$. 
Note that this produces a possibly different $u^k=(u^k_i)_{i=1}^d\in C_c^\infty(2Q_k)^d$ for each $k$. Thus we end up estimating
\begin{equation*}
\begin{split}
  \sum_{k=1}^N \int_{Q_k}\phi \ \fint_{Q_k}\abs{b-\ave{b}_{Q_k}}
  &\lesssim\sum_{k=1}^N\int_{Q_k}\phi\ \fint_{2Q_k} b J(u^k) \\
  &\lesssim\int b\sum_{k=1}^N J(u^k)\lambda_k,\qquad\lambda_k:=\fint_{Q_k}\phi.
\end{split}
\end{equation*}
In order to proceed, we make a randomisation trick. Due to the $d$-linear nature of the Jacobian, we invoke a sequence $(\zeta_k)_{k=1}^N$ of independent random $d$th roots of unity, i.e. the $\zeta_k$'s are independent random variables on some probability space, distributed so that $\mathbb P(\zeta_k=e^{i2\pi a/d})=1/d$ for each $a=0,1,\ldots,d-1$. The case $d=2$ thus corresponds to the familiar random signs. The important feature of these random variables is that, denoting by $\Exp$ the expectation,
\begin{equation*}\tag{$*$}
  \Exp\prod_{j=1}^d\zeta_{k_j}=\begin{cases} 1, & \text{if }k_1=\ldots=k_d, \\ 0, & \text{else}.\end{cases}
\end{equation*}
Indeed, if $k_1=\ldots=k_d=k$, then $\prod_{j=1}^d\zeta_{k_j}=\zeta_k^d\equiv 1$, so also its expectation is equal to $1$. Otherwise, we have $\prod_{j=1}^d\zeta_{k_j}=\prod_{j=1}^r\zeta_{m_j}^{n_j}$ for some {\em distinct} values $m_1,\ldots,m_r\in\{1,\ldots,N\}$ and exponents $n_1,\ldots,n_r\in\{1,\ldots,d-1\}$. By independence, it then follows that
\begin{equation*}
   \Exp\prod_{j=1}^d\zeta_{k_j}=\Exp\prod_{j=1}^r\zeta_{m_j}^{n_j}
   =\prod_{j=1}^r\Exp\zeta_{m_j}^{n_j},
\end{equation*}
where each factor satisfies
\begin{equation*}
  \Exp\zeta_{m_j}^{n_j}
  =\frac{1}{d}\sum_{a=0}^{d-1}e^{i2\pi a n_j/d}=0,
\end{equation*}
noting that $e^{i2\pi n_j/d}\neq 1$ since $0<n_j<d$.

Using $(*)$, we can now continue the computation from above with
\begin{equation*}
\begin{split}  
 \int b\sum_{k=1}^N J(u^k)\lambda_k
  &\overset{(*)}{=}\Exp\int bJ(\sum_{k_1=1}^N\eps_{k_1}\lambda_{k_1}^{r/r_1}u^{k_1}_1,\cdots,\sum_{k_d=1}^N\eps_{k_d}\lambda_{k_d}^{r/r_d}u^{k_d}_d) \\
  &\leq \Gamma\Exp\prod_{i=1}^d\BNorm{\sum_{k=1}^N\eps_k\lambda_k^{r/r_i}\nabla u^k_i}{r_i} \\
  &\leq \Gamma\prod_{i=1}^d\BNorm{\sum_{k=1}^N\lambda_k^{r/r_i}\abs{\nabla u^k_i}}{r_i}.
\end{split}
\end{equation*}

To estimate each $L^{r_i}$ norm above, we dualise with $\Norm{\psi}{r_i'}\leq 1$. Recalling that $u_i^k\in C_c^\infty(2Q_k)$ satisfies the bound for $u$ in Lemma \ref{lem:Jacobi}, and using the definition of $\lambda_k$ and the disjoint major subsets $E(Q_k)$ from Lemma \ref{lem:Lerner}, we have
\begin{equation*}
\begin{split}
  \int \psi\sum_{k=1}^N\lambda_k^{r/r_i}\abs{\nabla u^k_i}
  &=\sum_{k=1}^N\lambda_k^{r/r_i}\int_{2Q_k}\abs{\nabla u^k_i}\psi \\
  &\lesssim\sum_{k=1}^N\lambda_k^{r/r_i}\abs{Q_k}\Big(\fint_{2Q_k}\abs{\nabla u^k_i}^q\Big)^{1/q}\Big(\fint_{2Q_k}\psi^{q'}\Big)^{1/q'} \\
  &\leq\sum_{k=1}^N\Big(\fint_{Q_k}\phi\Big)^{r/r_i}\abs{Q_k}\Big(\fint_{2Q_k}\psi^{q'}\Big)^{1/q'} \\
  &\lesssim\sum_{k=1}^N \abs{E(Q_k)} \Big(\inf_{x\in Q_k}M\phi(x)\Big)^{r/r_i}\Big(\inf_{y\in Q_k}M(\psi^{q'})(y)\Big)^{1/q'} \\
  &\leq\int (M\phi)^{r/r_i} (M(\psi^{q'}))^{1/q'} \leq\Norm{(M\phi)^{r/r_i}}{r_i}\Norm{(M(\psi^{q'}))^{1/q'}}{r_i'} \\
  &=\Norm{M\phi}{r}^{r/r_i}\Norm{M(\psi^{q'})}{r_i'/q'}^{1/q'}\lesssim\Norm{\phi}{r}^{r/r_i}\Norm{\psi}{r_i'}\leq 1,
\end{split}
\end{equation*}
by the boundedness of the maximal operator and the choice of $q>r_i$ so that $r_i'>q'$.

Substituting back, we have checked that
\begin{equation*}
  \Norm{b-\ave{b}_{Q_0}}{L^{r'}(Q_0)}\lesssim \Gamma
\end{equation*}
for an arbitrary cube $Q_0$; by Lemma \ref{lem:loc2glob}, this completes the proof of the theorem in the remaining case that $r>1$.
\end{proof}

\begin{remark}
The ``if'' parts of the cases $r\in(\frac{d}{d+1},1]$ of Theorem \ref{thm:equiNormJu} could also be deduced from a result of \cite[cf. Theorem II.3]{CLMS}, which says that $J(u)$ belongs to the Hardy space $H^r(\R^d)$ under the same assumptions, together with the $H^1$-$\BMO$ duality when $r=1$ or the $H^r$-$\dot C^{0,d(1/r-1)}$-duality for $r\in(\frac{d}{d+1},1)$. However, a separate argument would be required for the end-point $r=\frac{d}{d+1}$ any way: in fact, $J:C_c^\infty(\R^d)\not\to H^{d/(d+1)}(\R^d)$, since $J(u)$ fails, in general, to satisfy the required moment conditions $\int x_i a=0$ of an $H^{d/(d+1)}$-atom $a$. This follows e.g. from the proof of Lemma \ref{lem:Jacobi}, which contains the observation that any $\partial_k w$, with $w\in C_c^\infty(\R^d)$, can arise as the Jacobian $J(u)$ of a suitable $u\in C_c^\infty(\R^d)^d$. However, we have $\int x_k\partial_k w=-\int w\partial_k x_k=-\int w$, which can easily be nonzero. The departure from the Hardy-H\"older duality is also reflected by the fact that the condition for $b$ in Theorem \ref{thm:equiNormJu} corresponding to $r=\frac{d}{d+1}$ is the usual Lipschitz-continuity, $\abs{b(x)-b(y)}\lesssim\abs{x-y}$, and not the Zygmund class condition arising from the Hardy space duality.

On the other hand, one can also give a different proof of the ``if'' part of Theorem \ref{thm:equiNormJu} in this special case $r=\frac{d}{d+1}$. Using the notation from the previous proof, where $1>\frac1s=\frac1r-\frac{1}{r_1}=1+\frac1d-\frac{1}{r_1}$, we find that $r_1\in(1,d)$. Writing, as before, $J(u)=\nabla u_1\cdot\sigma$, we have
\begin{equation*}
  \int bJ(u)
  =\int b \nabla u_1\cdot\sigma
  =-\int u_1\operatorname{div}(b\sigma)
  =-\int u_1(\nabla b)\cdot\sigma,
\end{equation*}
since $\operatorname{div}\sigma=0$. But then we can estimate
\begin{equation*}
  \Babs{\int u_1(\nabla b)\cdot\sigma}
  \leq\Norm{u_1}{s'}\Norm{\nabla b}{\infty}\Norm{\sigma}{s},
\end{equation*}
where $\Norm{\nabla b}{\infty}$ is bounded by the Lipschitz constant, $\Norm{\sigma}{s}\leq 1$, and
\begin{equation*}
  \frac{1}{s'}=1-\frac{1}{s}=1-\Big(\frac{1}{r}-\frac{1}{r_1}\Big)=\frac{1}{r_1}-\frac{1}{d},
\end{equation*}
so that that $s'=r_1 d/(d-r_1)=r_1^*$ is the Sobolev exponent. Thus
\begin{equation*}
  \Norm{u_1}{s'}=\Norm{u_1}{r_1^*}\lesssim\Norm{\nabla u_1}{r_1}\leq 1.
\end{equation*}
by Sobolev's inequality, and this completes the alternative proof.
\end{remark}

\subsection{The linear span of Jacobians}

Here we will obtain the following consequence of Theorem \ref{thm:equiNormJu}:

\begin{theorem}\label{thm:spanJ}
Let $d\geq 2$ and $p\in[1,\infty)$. Then
\begin{equation*}
\begin{split}
  \Big\{\sum_{j=1}^\infty Ju^j: u^j\in\dot W^{1,pd}(\R^d)^d,\ \sum_{j=1}^\infty\ & \Norm{\nabla u^j}{L^{pd}(\R^d)^{d\times d}}^d< \infty \Big\} \\
  &=\begin{cases}
     L^p(\R^d), & p\in(1,\infty), \\
     H^1(\R^d), & p=1.
    \end{cases}
\end{split}
\end{equation*}
In fact, each $f\in L^p(\R^d)$ (resp. $f\in H^1(\R^d)$) admits a representation
\begin{equation*}
  f=\sum_{j=1}^\infty Ju^j,\quad\sum_{j=1}^\infty\Norm{\nabla u^j}{L^{pd}(\R^d)^{d\times d}}^d\lesssim
  \begin{cases}
     \Norm{f}{L^p(\R^d)}, & p\in(1,\infty), \\
     \Norm{f}{H^1(\R^d)}, & p=1,
    \end{cases}
\end{equation*}
where each $u^j\in C_c^\infty(\R^d)^d$.
\end{theorem}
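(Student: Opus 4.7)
The plan is to combine the norm equivalence of Theorem~\ref{thm:equiNormJu} with a bipolar-type duality argument and a telescoping iteration, in the spirit of how \cite{CLMS} upgrades its norm equivalence at $p=1$ into a surjectivity statement. Throughout, write $X := L^p(\R^d)$ for $p\in(1,\infty)$ and $X := H^1(\R^d)$ for $p=1$, with dual $X^* = L^{p'}(\R^d)$ or $\BMO(\R^d)/\text{const}$ respectively; on $V := \lspan\{Ju : u\in C_c^\infty(\R^d)^d\} \subset X$ the duality pairing is the Lebesgue integral $\pair{b}{f} = \int bf$. Equip $V$ with the gauge
\begin{equation*}
  N(f) := \inf\Big\{\sum_{j=1}^m \Norm{\nabla u^j}{pd}^d : f = \sum_{j=1}^m Ju^j,\ u^j \in C_c^\infty(\R^d)^d\Big\}.
\end{equation*}
Since $J(\lambda^{1/d}u)=\lambda\, Ju$ for $\lambda>0$ and sign-flipping one component negates $Ju$, $N$ is positively homogeneous and symmetric; concatenating representations gives subadditivity. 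The bound $\Norm{Ju}{X} \lesssim \Norm{\nabla u}{pd}^d$ follows from Hadamard's inequality, Hölder, and AM--GM when $p>1$, and from \cite{CLMS} when $p=1$; hence $\Norm{f}{X}\lesssim N(f)$ on $V$.

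I would first verify that $V$ is dense in $X$ by Hahn--Banach. Any $b\in X^*$ annihilating $V$ satisfies $\int b\,Ju = 0$ for all $u\in C_c^\infty(\R^d)^d$, so the supremum $\Gamma$ in Theorem~\ref{thm:equiNormJu} (with $r_i=pd$ for all $i$, giving $r=p$) vanishes. The theorem then forces $b$ to be a constant, which is the zero element of $X^*$ in both cases.

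The main step is to upgrade density to a quantitative statement via the bipolar theorem. Set $\mathcal{B} := \{f \in V : N(f) \leq 1\}$; this is convex and balanced in $X$. For $b\in X^*$ and $f=\sum_j Ju^j \in \mathcal{B}$,
\begin{equation*}
  |\pair{b}{f}| \leq \sum_j |\pair{b}{Ju^j}| \lesssim \Norm{b}{X^*}\sum_j \prod_i \Norm{\nabla u^j_i}{pd} \lesssim \Norm{b}{X^*}\sum_j \Norm{\nabla u^j}{pd}^d \lesssim \Norm{b}{X^*},
\end{equation*}
using the ``$\lesssim$'' direction of Theorem~\ref{thm:equiNormJu} summand-by-summand, together with AM--GM $\prod_i a_i \leq \tfrac{1}{d}\sum_i a_i^d$. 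Conversely, any $u$ almost attaining the supremum in Theorem~\ref{thm:equiNormJu}, rescaled so that the single-term $C^{-1}Ju$ lies in $\mathcal{B}$, gives $\sup_{f\in\mathcal{B}}|\pair{b}{f}| \gtrsim \Norm{b}{X^*}$. Hence the polar $\mathcal{B}^\circ\subset X^*$ is comparable to $B_{X^*}$ from both sides. The bipolar theorem, together with Mazur's theorem identifying weak and norm closures of convex sets, then yields $X$-norm closure $\overline{\mathcal{B}} \supset cB_X$ for some $c>0$. Scaling: for every $f\in X$ and every $\epsilon>0$ there exist $u^1,\dots,u^m \in C_c^\infty(\R^d)^d$ with $\sum_j \Norm{\nabla u^j}{pd}^d \lesssim \Norm{f}{X}$ and $\Norm{f-\sum_j Ju^j}{X} < \epsilon$.

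Finally I would iterate. Given $f\in X$, pick $g_1\in V$ with $N(g_1)\lesssim\Norm{f}{X}$ and $\Norm{f-g_1}{X}\leq \Norm{f}{X}/2$; apply the same to $f-g_1$ to produce $g_2$ with $N(g_2)\lesssim \Norm{f}{X}/2$, and inductively $g_k$ with $N(g_k)\lesssim 2^{-k+1}\Norm{f}{X}$ and $\Norm{f-\sum_{k'\leq k}g_{k'}}{X}\leq 2^{-k}\Norm{f}{X}$. Writing each $g_k = \sum_j Ju^{j,k}$ with $\sum_j \Norm{\nabla u^{j,k}}{pd}^d \leq 2N(g_k)$ and reindexing the pairs $(j,k)$ by a single index $\ell$ produces the desired $f = \sum_\ell Ju^\ell$ in $X$ with $\sum_\ell \Norm{\nabla u^\ell}{pd}^d \lesssim \Norm{f}{X}$; the series converges absolutely in $X$ since $\Norm{Ju^\ell}{X}\lesssim \Norm{\nabla u^\ell}{pd}^d$.

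The main obstacle is the polar equivalence in the bipolar step: one needs the ``$\lesssim$'' direction of Theorem~\ref{thm:equiNormJu} as a per-summand bound on $|\int bJu^j|$ (combined with AM--GM to pass from products to a manageable sum of $d$-th powers), and its ``$\gtrsim$'' direction as a single-test characterization of $\Norm{b}{X^*}$. Once these are in place the bipolar theorem, Mazur, and the telescoping loop are routine.
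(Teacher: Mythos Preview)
Your proposal is correct and is essentially the same argument as the paper's: the paper packages your bipolar-plus-iteration step as the functional-analytic Lemma~\ref{lem:CLMS} (from \cite{CLMS}), applied with $V=\{Ju:u\in C_c^\infty(\R^d)^d,\ \Norm{\nabla u}{pd}\leq 1\}$, and then verifies its condition~\eqref{it:equiNorms} via Theorem~\ref{thm:equiNormJu} exactly as you do. The only cosmetic difference is that the paper works with the set of single Jacobians and invokes the lemma, whereas you work with its finite convex hull $\mathcal B$ and unfold the lemma's proof (Hahn--Banach/bipolar for $\overline{\conv}(V)\supset cB_X$, then the telescoping to pass to the $s$-convex hull); note also that for the upper polar inclusion you could bypass the ``if'' direction of Theorem~\ref{thm:equiNormJu} and simply use $\Norm{Ju}{X}\lesssim\Norm{\nabla u}{pd}^d$, as the paper does.
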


The power $d$ in the series is related to the $d$-homogeneity of the Jacobian, so that $\Norm{\nabla u^j}{L^{pd}(\R^d)^{d\times d}}^d$ is (up to constant) an upper bound for $\Norm{Ju^j}{L^p(\R^d)}$ or $\Norm{Ju^j}{H^1(\R^d)}$ for $p=1$. The case $p=1$ is already due to Coifman et al.~\cite{CLMS}; they explicitly formulate a similar result \cite[Theorem III.2]{CLMS} for the ``div-curl example'' but point out that ``this type of answer applies also to other examples like the Jacobian''. Our proof of the full Theorem \ref{thm:spanJ} depends on the same functional analytic lemma as used in \cite{CLMS} for the case $p=1$. The formulation below combines \cite[Lemmas III.1, III.2]{CLMS} and is taken from \cite{Lindberg:2017}. We recall the short proof for the sake of recording a precise quantitative relation between the equivalent qualitative conditions:

\begin{lemma}\label{lem:CLMS}
Let $V\subset\bar B_X(0,1)$ be a symmetric subset of the unit-ball of a  Banach space $X$. Then the following conditions are equivalent:
\begin{enumerate}
  \item\label{it:equiNorms} There is $\alpha>0$ such that $\sup_{x\in V}\abs{\pair{\lambda}{x}}\geq\alpha\Norm{\lambda}{X^*}$ for all $\lambda\in X^*$.
  \item\label{it:clConv} The closed convex hull $\overline{\operatorname{conv}}(V)$ contains a ball $\bar B_X(0,\beta)$ of radius $\beta>0$.
  \item\label{it:sConv} The {\em $s$-convex hull}
\begin{equation*}
  s(V):=\Big\{\sum_{j=1}^\infty \lambda_j x_j: x_j\in V,\lambda_j\geq 0,\sum_{j=1}^\infty\lambda_j=1\Big\}
\end{equation*}
contains an open ball $B_X(0,\gamma)$ of radius $\gamma>0$.
\end{enumerate}
Moreover, the largest admissible values of $\alpha,\beta,\gamma$ satisfy $\alpha=\beta=\gamma$.
\end{lemma}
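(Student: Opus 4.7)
The plan is to close the loop (3) $\Rightarrow$ (2) $\Leftrightarrow$ (1) $\Rightarrow$ (3), tracking the constants carefully so as to obtain the sharp equality $\alpha = \beta = \gamma$.

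The direction (3) $\Rightarrow$ (2) is immediate from $s(V) \subseteq \overline{\conv}(V)$: the partial sums of $\sum_j \lambda_j x_j$ (with $\lambda_j \geq 0$, $\sum \lambda_j = 1$, $x_j \in V$) become convex combinations of $V$ after padding with the zero vector, which lies in $\conv(V)$ by symmetry of $V$, and the infinite sum is their norm limit. Taking closures, any open ball of radius $\gamma$ inside $s(V)$ yields a closed ball of radius $\gamma$ inside $\overline{\conv}(V)$, so the largest admissible $\beta$ is at least the largest admissible $\gamma$. For (1) $\Leftrightarrow$ (2), I would invoke the bipolar theorem for the symmetric set $V$: writing $V^\circ = \{\lambda \in X^* : \sup_{x \in V}\abs{\pair{\lambda}{x}} \leq 1\}$, one has $V^{\circ\circ} = \overline{\conv}(V)$, and $\bar B_X(0,\beta)^\circ = \bar B_{X^*}(0,1/\beta)$. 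Taking polars in $\bar B_X(0,\beta) \subseteq V^{\circ\circ}$ reverses the inclusion to $V^\circ \subseteq \bar B_{X^*}(0,1/\beta)$, which unpacks precisely to (1) with $\alpha = \beta$.

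The substantive step is (2) $\Rightarrow$ (3) with the sharp constant $\gamma = \beta$. Fix $y \in X$ with $\Norm{y}{X} < \beta$ and choose $\rho \in (0,1)$ so that $\Norm{y}{X} \leq \beta(1-\rho)$. I construct iteratively sequences $(y_k)_{k \geq 0} \subset X$ and finite convex combinations $v_k = \sum_i \mu_i^{(k)} w_i^{(k)} \in \conv(V)$ (with $\mu_i^{(k)} \geq 0$, $\sum_i \mu_i^{(k)} = 1$, $w_i^{(k)} \in V$) satisfying
\begin{equation*}
  y_0 = y,\qquad \Norm{y_k}{X} \leq \beta(1-\rho)\rho^k,\qquad y_{k+1} = y_k - (1-\rho)\rho^k v_k.
\end{equation*}
The induction uses (2): the bound on $y_k$ gives $y_k/[(1-\rho)\rho^k] \in \bar B_X(0,\beta) \subseteq \overline{\conv}(V)$, so some $v_k \in \conv(V)$ approximates it within error $\rho\beta$, which after multiplying by $(1-\rho)\rho^k$ yields the required estimate for $y_{k+1}$. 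Since $\Norm{y_k}{X} \to 0$, telescoping produces
\begin{equation*}
  y = \sum_{k=0}^\infty (y_k - y_{k+1}) = \sum_{k=0}^\infty \sum_i (1-\rho)\rho^k \mu_i^{(k)} w_i^{(k)},
\end{equation*}
with nonnegative weights summing to $(1-\rho)\sum_{k \geq 0}\rho^k = 1$; hence $y \in s(V)$. Since $\rho$ may be chosen only after $y$ is fixed, this shows $B_X(0,\beta) \subseteq s(V)$, so the largest admissible $\gamma$ is at least $\beta$, and combined with the other direction this yields $\alpha = \beta = \gamma$.

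The main subtlety I anticipate is preserving the constant throughout: a naive ``halve the error at each step'' iteration produces a series with total weight $2$, and therefore only $\gamma = \beta/2$. The free geometric ratio $\rho \in (0,1)$, sent to $0$ separately for each target $y$ of norm $<\beta$, is what secures the sharp equality $\gamma = \beta$.
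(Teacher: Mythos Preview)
Your proof is correct and matches the paper's approach essentially step for step: the paper also obtains $\beta\geq\gamma$ from $s(V)\subset\overline{\conv}(V)$, handles $\alpha=\beta$ by Hahn--Banach separation (which your bipolar formulation simply repackages), and proves $\gamma\geq\beta$ by the same geometric-series iteration, choosing $x_k\in\conv(V)$ with error shrinking like $\eps^k\beta$ so that $(1-\eps)x\in s(V)$ for every $x\in\bar B_X(0,\beta)$. Your version fixes $y$ first and then the ratio $\rho$, while the paper fixes $\eps$ first and then varies $x$; this is only a reparametrisation, and your closing remark about why a free ratio is needed for the sharp constant is exactly the point.
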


\begin{proof}
If $\lambda\in X^*$, we can find $x_0\in \bar B_X(0,\beta)$ such that $\beta\Norm{\lambda}{X^*}=\abs{\pair{\lambda}{x_0}}$. Writing this $x_0$ as $x_0=\lim_n x_n$, where $x_n\in\operatorname{conv}(V)$, we easily check that $\sup_{x\in V}\abs{\pair{\lambda}{x}}\geq\beta\Norm{\lambda}{X^*}$, and hence $\alpha\geq\beta$. On the other hand, if $y_0\notin\overline{\operatorname{conv}}(V)$, then by the Hahn--Banach theorem there exists $\lambda\in X^*$ such that $\Re\pair{\lambda}{x}\leq\eta<\Re\pair{\lambda}{y_0}$ for some $\eta\in\R$ and all $x\in\overline{\operatorname{conv}}(V)$, in particular for $x\in V$, and thus, by the symmetry of $V$, also $\abs{\pair{\lambda}{x}}\leq\eta<\abs{\pair{\lambda}{y_0}}\leq\Norm{\lambda}{X^*}\Norm{y_0}{X}$ for all $x\in V$. Taking the supremum over $x\in V$ and using \eqref{it:equiNorms} it follows that $\alpha\Norm{\lambda}{X^*}\leq\eta<\Norm{\lambda}{X^*}\Norm{y_0}{X}$. Since clearly $\lambda\neq 0$, it follows that $\Norm{y_0}{X}>\alpha$, and thus $\beta\geq\alpha$.

Clearly $s(V)\subset\overline{\operatorname{conv}}(V)$, and hence $B_X(0,\gamma)\subset s(V)$ implies $\bar B_X(0,\gamma)\subset\overline{\operatorname{conv}}(V)$ so that $\beta\geq\gamma$. On the other hand, suppose that $x\in\bar B(0,\beta)\subset\overline{\operatorname{conv}}(V)$. Fix $\eps>0$. Suppose that we have already found $x_k\in\operatorname{conv}(V)$ such that
\begin{equation}\label{eq:FAind}
   \Norm{x-\sum_{k=0}^{n-1}\eps^k x_k}{X}\leq \eps^n\beta
\end{equation}
(this is vacuous for $n=0$). Then $\eps^{-n}(x-\sum_{k=0}^{n-1}\eps^k x_k)\in\bar B_X(0,\beta)\subset\overline{\operatorname{conv}}(V)$, and thus we can pick $x_n\in\operatorname{conv}(V)$ with $\Norm{\eps^{-n}(x-\sum_{k=0}^{n-1}\eps^k x_k)-x_n}{X}\leq\eps\beta$. But this is the same as \eqref{eq:FAind} with $n+1$ in place of $n$. By induction it follows that $x=\sum_{k=0}^\infty\eps_k x_k$ with $x_k\in\operatorname{conv}(V)$. Since $\sum_{k=0}^\infty\eps^k=(1-\eps)^{-1}$, this means that $(1-\eps)x\in s(V)$. As $x\in\bar B_X(0,\beta)$ and $\eps>0$ were arbitrary, we have $\bar B_X(0,(1-\eps)\beta)\subset s(V)$, and hence $B_X(0,\beta)\subset s(V)$. Thus $\gamma\geq\beta$.
\end{proof}

\begin{proof}[Proof of Theorem \ref{thm:spanJ}]
We apply Lemma \ref{lem:CLMS} with $X=L^p(\R^d)$ if $p\in(1,\infty)$, or $X=H^1(\R^d)$ if $p=1$. In either case, let
\begin{equation*}
  V=\{Ju:u\in C_c^\infty(\R^d)^d,\Norm{\nabla u}{L^{pd}(\R^d)^{d\times d}}\leq 1\}.
\end{equation*}
It is immediate that $V$ is symmetric, and that $V\subset\bar B_X(0,1)$ if $p>1$. For $p=1$, this last inclusion is nontrivial but well known from \cite[Theorem II.1]{CLMS}.

The assertion of Theorem \ref{thm:spanJ} is clearly the same as \eqref{it:sConv} of Lemma \ref{lem:CLMS} for these choices of $X$ and $V$. By Lemma \ref{lem:CLMS}, it hence suffices to verify \eqref{it:equiNorms} of the same lemma, i.e., that 
\begin{equation*}
\begin{split}
   \Norm{b}{X^*}\lesssim\sup\Big\{\Babs{\int bJ(u)} &: u\in C^\infty_c(\R^d)^d:\Norm{\nabla u}{L^p(\R^d)^{d\times d}}\leq 1\Big\},\\
   &\forall b\in X^*=\begin{cases} L^{p'}(\R^d), & p\in(1,\infty), \\ \BMO(\R^d), & p=1. \end{cases}
\end{split}
\end{equation*}
But this is precisely the statement of Theorem \ref{thm:equiNormJu} for $r=p\in[1,\infty)$ and $r_1=\ldots=r_d=pd$. The a priori condition that $b\in L^{p'}(\R^d)$ guarantees that the additive constant present in Theorem \ref{thm:equiNormJu} for $r>1$ does not appear here.
\end{proof}

\begin{remark}
\begin{enumerate}
  \item Lindberg \cite[Lemma 3.1]{Lindberg:2017} shows that another equivalent condition in Lemma \ref{lem:CLMS} is that $\bigcup_{n=1}^\infty n\cdot s(V)$ has second category in $X$. Hence, if any of these conditions fails, then $\bigcup_{n=1}^\infty n\cdot s(V)$ has first category in $X$. Lindberg uses this to show \cite[Theorems 1.2, 7.4]{Lindberg:2017} that the set
\begin{equation*}
  \Big\{\sum_{j=1}^\infty Ju^j:u^j\in W^{1,pd}(\R^d)^d,\sum_{j=1}^\infty\big(\Norm{u^j}{L^{pd}(\R^d)^d}+\Norm{\nabla u^j}{L^{pd}(\R^d)^{d\times d}}\big)^d<\infty\Big\}
\end{equation*}
has first category in $L^p(\R^d)$ if $p\in(1,\infty)$, or in $H^1(\R^d)$ if $p=1$.
  \item Lindberg \cite[p. 739]{Lindberg:2017} also sketches how to deduce the special case $d=2$ of Theorems \ref{thm:equiNormJu} and \ref{thm:spanJ} from the special case of (then unknown) Theorem \ref{thm:summary}, where $T$ is the Ahlfors--Beurling operator. Since a more general result is proved above by working directly with the Jacobian, we do not repeat his argument here, but the interested reader may consult the companion paper \cite{Hyt:Ricci} for this approach. Nevertheless, the strategy proposed by Lindberg was an important motivation for the discovery of our present results.
\end{enumerate}
\end{remark}

\section{Higher order real commutators and the median method}

In this section we establish the following variant of Theorem \ref{thm:summary}. In one direction, it generalises Theorem \ref{thm:summary} by allowing iterated commutators of arbitrary order, but in another direction it imposes a more restrictive assumption by requiring the pointwise multiplier $b$ to be real-valued. This restriction arises from the proof using the so-called median method, which takes explicit advantage of the order structure of the real line. We note, however, that this restriction is imposed on $b$ only; the kernel $K$ of $T$ may still be complex-valued.

\begin{theorem}\label{thm:higher}
Let $1<p,q<\infty$, let $T$ be a non-degenerate Calder\'on--Zygmund operator on $\R^d$, and let $k\in\{1,2,\ldots\}$ and $b\in L^k_{\loc}(\R^d;\R)$. Then the $k$ times iterated commutator
\begin{equation*}
  T_b^k:=[b,T_b^{k-1}],\qquad T_b^1:= [b,T],
\end{equation*}
defines a bounded operator $T_b^k:L^p(\R^d)\to L^q(\R^d)$ if and only if:
\begin{itemize}
  \item $p=q$ and $b$ has bounded mean oscillation, or 
  \item $\displaystyle p<q\leq p_k^* =\Big(\frac{1}{p}-\frac{k}{d}\Big)_+^{-1}$
  and $b$ is $\displaystyle \alpha=\frac{d}{k}\Big(\frac{1}{p}-\frac{1}{q}\Big)$-H\"older continuous, or
  \item $q>p_k^*$ and $b$ is constant, or
  \item $p>q$ and $b=a+c$, where $a\in L^{rk}(\R^d)$ for $\displaystyle \frac1r=\frac1q-\frac1p$, and $c$ is constant.
\end{itemize}
\end{theorem}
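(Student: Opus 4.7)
The plan is to mirror the architecture of Sections~\ref{sec:complex}--\ref{sec:Jacobi}, but to replace the approximate weak factorisation by the \emph{median method}, which takes decisive advantage of the total order on $\R$. The ``if'' directions are largely routine: constants commute; if $b\in\dot C^{0,\alpha}$, then $\abs{T_b^k f(x)}\lesssim\Norm{b}{\dot C^{0,\alpha}}^k\int\abs{x-y}^{k\alpha-d}\abs{f(y)}\ud y$, and Hardy--Littlewood--Sobolev gives the desired $L^p\to L^q$ bound at $k\alpha=d(1/p-1/q)$; for $b\in\BMO$ and $p=q$, an iterated Coifman--Rochberg--Weiss (or sparse) argument applies; for $b=a+c$ with $a\in L^{rk}$ and $p>q$, expand $T_b^k=\sum_{j=0}^k(-1)^j\binom{k}{j}M_a^{k-j}TM_a^j$ and apply H\"older, using only the $L^s$-boundedness of $T$ on finitely many intermediate exponents.

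The heart of the ``only if'' direction is the following \emph{median inequality}. Given a ball $B$ with companion $\tilde B$ from Proposition~\ref{prop:unifNonDegImplies}, write $m_E$ for a median of $b$ on $E$ and set $E^\pm(B):=\{x\in B:\pm(b(x)-m_B)\geq 0\}$, each of measure $\geq\tfrac12\abs{B}$. Assuming WLOG $m_B\leq m_{\tilde B}$, test the off-support bilinear form with
\begin{equation*}
  f=\mathbf 1_{E^-(B)},\qquad g=\overline{\sign K(x_0,y_0)}\cdot\mathbf 1_{E^+(\tilde B)},
\end{equation*}
so that on $\supp g\times\supp f$ the decomposition
\begin{equation*}
  b(x)-b(y)=(b(x)-m_{\tilde B})+(m_{\tilde B}-m_B)+(m_B-b(y))
\end{equation*}
has all three summands nonnegative, whence $(b(x)-b(y))^k\geq(m_{\tilde B}-m_B)^k$ with a fixed sign. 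Splitting $K(x,y)=K(x_0,y_0)+\Delta(x,y)$ and expanding $(b(x)-b(y))^k$ multinomially, the diagonal term contributes $\gtrsim A^{-d}r^d(m_{\tilde B}-m_B)^k$ from below, while every remaining term carries either a factor $\int\abs{\Delta}\lesssim\varepsilon_A A^{-d}$ from Proposition~\ref{prop:unifNonDegImplies} or a power $\int_{(\tilde)B}\abs{b-m_{(\tilde)B}}^j$ known to be finite by $b\in L^k_{\loc}$. Combining with the upper bound supplied by the boundedness hypothesis, $\lesssim\Theta\cdot r^{d/p}\cdot r^{d/q'}=\Theta r^{d(1+1/p-1/q)}$, yields
\begin{equation*}
  \abs{m_B-m_{\tilde B}}^k\lesssim A^{O(d)}\Theta\, r^{d(1/p-1/q)}+(\text{absorbable error}).
\end{equation*}

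With the median inequality in hand, the passage to function-space norms follows the template of the previous sections. One chains the inequality along a dyadic tower---medians of nested balls differ by controlled amounts---to deduce $\fint_B\abs{b-m_B}^k\lesssim r^{d(1/p-1/q)}$; the $L^k$-John--Nirenberg principle then gives the same with $\ave{b}_B$ in place of $m_B$. For $p=q$ this is $\Norm{b}{\BMO}\lesssim\Theta$; for $p<q\leq p_k^*$ the telescoping Lebesgue-point argument of Theorem~\ref{thm:pleq1} gives $b\in\dot C^{0,\alpha}$ with $\alpha=(d/k)(1/p-1/q)\in(0,1]$; for $q>p_k^*$ the resulting $\alpha>1$ forces $b$ constant; for $p>q$ one imitates Theorem~\ref{thm:offDiag}, upgrading the local $L^k$-oscillation estimate via a dyadic stopping-time decomposition (the $k$th-power analogue of Lemma~\ref{lem:dec3}) and Lemmas~\ref{lem:Lerner}--\ref{lem:loc2glob}, which convert uniform local oscillation control into $L^{rk}$-membership of $b$ modulo a constant.

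The principal obstacle is the absorption in the median inequality. The cross terms $\int_{(\tilde)B}\abs{b-m_{(\tilde)B}}^j$ are a priori only finite, with no explicit bound available before the inequality we are trying to prove. Following the pattern of Theorems~\ref{thm:pleq1} and~\ref{thm:offDiag}, the cure is to introduce a truncated version of the desired seminorm (restricted to cubes of bounded size and to $\Norm{f}{\infty}\leq R$), bound the cross terms by $\varepsilon_A/A^d$ times this truncated quantity, absorb after fixing $A$ large enough, and then remove the truncation. That $b$ is real-valued enters crucially here: the total order on $\R$ supplies the median and the sign-consistent decomposition above, ensuring every term in the multinomial expansion has a definite sign so that main and error contributions combine constructively rather than destructively. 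A secondary technical point for $k\geq 2$ is that cross terms of many different homogeneities appear simultaneously, so the absorption must be carried out power-by-power on nested scales.
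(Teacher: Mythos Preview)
Your architecture is right, but the central ``median inequality'' is set up in a way that leaves a real gap. You extract only $|m_B-m_{\tilde B}|^k$ from the testing, and then assert that ``chaining along a dyadic tower'' yields $\fint_B|b-m_B|^k\lesssim\Theta r^{k\alpha}$. But $B$ and $\tilde B$ are \emph{separated} (at distance $\approx Ar$), not nested, and in the homogeneous case $\tilde B=B+Ar\theta_0$ lies in a single fixed direction; a bound on median differences across such pairs does not, by itself, control oscillation on a single ball. The chaining step is simply not there.

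The paper's version of the median method is both simpler and avoids this gap. Use only \emph{one} median: let $\alpha$ be a median of $b$ on the companion ball $\tilde B$ (not on $B$). On $\{x\in\tilde B:b(x)\le\alpha\}\times\{y\in B:b(y)\ge\alpha\}$ one has the elementary pointwise inequality $(b(y)-\alpha)^k\le(b(y)-b(x))^k$, both sides nonnegative. In the two-variable kernel case, $\Re(\sigma K(x,y))\gtrsim|B|^{-1}$ holds pointwise for large $A$ (Remark~\ref{rem:quposBalls}), so multiplying and integrating gives
\[
  \int_B(b(y)-\alpha)_+^k\ud y\lesssim\abs{\pair{1_{\tilde E_1}}{T_b^k 1_{E_1}}}
\]
directly---no multinomial expansion, no cross terms, no absorption. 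Reversing the signs handles $\int_B(\alpha-b)_+^k$; summing gives $\inf_c\int_B|b-c|^k$ on the nose (Lemma~\ref{lem:quposBalls}). The reason both halves work is precisely that $\alpha$ is the median on $\tilde B$, so both $\tilde B\cap\{b\le\alpha\}$ and $\tilde B\cap\{b\ge\alpha\}$ are large; with your two-median setup and the WLOG $m_B\le m_{\tilde B}$, the second half (bounding $\int_{E^+(B)}(b-m_B)^k$) has no sign control on $b(x)-b(y)$.

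In the rough homogeneous case the pointwise kernel lower bound fails and one must write $K(x-y)=K(x_0-y_0)+[K(x-y)-K(x_0-y_0)]$; the error now produces a single term $\eps_A\int_{\tilde B}|b-\alpha|^k$, not a multinomial cascade. It is absorbed by running the same estimate for the adjoint $T^*$ (direction $-\theta_0$, so the roles of $B$ and $\tilde B$ swap), yielding a $2\times2$ system that solves itself (Lemma~\ref{lem:medianRough}). For $p>q$, once $\fint_Q|b-\ave{b}_Q|^k\lesssim$\,(pairings) is available, no analogue of Lemma~\ref{lem:dec3} is needed: feed the local bound into the sparse family of Lemma~\ref{lem:Lerner}, dualise, apply the hypothesis~\eqref{eq:quiteWeakLpLqk} together with Lemma~\ref{lem:Doob}, and finish with Lemma~\ref{lem:loc2glob}.
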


As in the case of Theorem \ref{thm:summary}, all the ``if'' statements are either classical (such as the case $p=q$ that goes back to Coifman, Rochberg and Weiss \cite{CRW}) or straightforward; this applies to the remaining cases, which may be handled by easy extensions of the arguments sketched for $k=1$ in Section \ref{ss:intro-suff}.
(There is also a variant of the $p<q$ case of Theorem \ref{thm:higher} due to Paluszy\'nski, Taibleson and Weiss \cite{PTW}, but for $k>1$, it deals with operators that are related to, but not exactly the same as, the iterated commutators $T_b^k$ that we study. This leads to a slightly different result.)

 As before, our principal task is to prove the ``only if'' directions.

\subsection{Basic estimates of the median method}

We will not give a formal definition of the ``median method'', but the reason for this nomenclature should be fairly apparent from the considerations that follow. The broad philosophy of this method should be attributed to Lerner, Ombrosi and Rivera-R{\'{\i}}os \cite{LORR}, but we fine-tune some of its details in such a way as to be able, in particular, to answer a problem that was raised but left open in \cite[Remark 4.1]{LORR}.

The simplest form of the median method is contained in the following lemma. Under a quantitative positivity assumption on the kernel (which may nevertheless be complex-valued!), it needs no additional ``Calder\'on--Zygmund'' structure. 

\begin{lemma}\label{lem:quposBalls}
Let $b\in L^k_{\loc}(\R^d;\R)$.
Suppose that, for some disjoint balls $B,\tilde B$ of equal radius $r$, 
we have
\begin{equation}\label{eq:quposBalls}
  \Re(\sigma K(x,y))\gtrsim \frac{1}{\abs{B}}\qquad\text{for all }x\in\tilde B, y\in B
\end{equation}
for some $\abs{\sigma}=1$. If $T$ has kernel $K$, then
\begin{equation*}
  \inf_c\int_B\abs{b(y)-c}^k\ud y\lesssim\sum_{i=1}^2\abs{\pair{1_{\tilde E_i}}{T_b^k 1_{E_i}}}
\end{equation*}
for some subsets $E_i\subset B$, $\tilde E_i\subset\tilde B$.
\end{lemma}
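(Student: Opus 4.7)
The approach I would take is to choose $c$ in the infimum to be a median $\tilde m$ of $b$ on $\tilde B$ (rather than a median on $B$), so that $\int_B|b-\tilde m|^k \geq \inf_c \int_B|b-c|^k$. The key virtue of picking a median of $\tilde B$ is that, by the defining property of the median, both sets
\begin{equation*}
  \tilde E_1 := \{x \in \tilde B : b(x) \geq \tilde m\}, \qquad \tilde E_2 := \{x \in \tilde B : b(x) \leq \tilde m\}
\end{equation*}
have measure at least $|\tilde B|/2 \eqsim |B|$; these will serve as the sets $\tilde E_i$ demanded by the lemma.

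On the $B$-side I would partition $E_1 := \{y \in B : b(y) \leq \tilde m\}$ and $E_2 := \{y \in B : b(y) \geq \tilde m\}$, so that
\begin{equation*}
  \int_B |b-\tilde m|^k \leq \int_{E_1}(\tilde m - b)^k + \int_{E_2}(b - \tilde m)^k.
\end{equation*}
The reason for matching $E_i$ with $\tilde E_i$ in this way is that on each product set $\tilde E_i \times E_i$ the sign of $b(x)-b(y)$ is fixed, $(b(x)-b(y))^k$ has constant sign, and
\begin{equation*}
  |(b(x)-b(y))^k| \;\geq\; |b(y)-\tilde m|^k,
\end{equation*}
because $|b(x)-b(y)|$ is at least $|\tilde m - b(y)|$ in either situation. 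Combining this with the positivity hypothesis $\Re(\sigma K(x,y)) \gtrsim 1/|B|$, and using that a sign-definite integrand has integral equal (up to sign) to the integral of its modulus, I would obtain
\begin{equation*}
  |\pair{1_{\tilde E_i}}{T_b^k 1_{E_i}}|
  \;\geq\; \big|\Re\big(\sigma\,\pair{1_{\tilde E_i}}{T_b^k 1_{E_i}}\big)\big|
  \;\gtrsim\; \frac{|\tilde E_i|}{|B|}\int_{E_i}|b-\tilde m|^k
  \;\gtrsim\; \int_{E_i}|b-\tilde m|^k.
\end{equation*}
Summing over $i=1,2$ and using that the left-hand side of the lemma is bounded above by $\int_B |b-\tilde m|^k$ concludes the proof.

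The only subtle point is sign-handling for odd $k$: a priori $(b(x)-b(y))^k \Re(\sigma K(x,y))$ could have oscillating sign, causing cancellations that destroy the lower bound. The median-based splitting above is precisely engineered so that on each $\tilde E_i \times E_i$ the factor $(b(x)-b(y))^k$ is of one sign throughout, after which the unit complex $\sigma$ is absorbed by passing to real parts and the remaining integration is routine. Choosing the median of $\tilde B$ rather than of $B$ is what makes both $\tilde E_i$ simultaneously large, which is what the lemma needs.
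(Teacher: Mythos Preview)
Your proof is correct and follows essentially the same approach as the paper's own argument: both choose the constant to be a median of $b$ on $\tilde B$, split $B$ and $\tilde B$ into the level sets $\{b\geq\tilde m\}$ and $\{b\leq\tilde m\}$, and exploit that on each matched pair the factor $(b(x)-b(y))^k$ has constant sign so that the positivity of $\Re(\sigma K)$ yields the lower bound. Your indexing of $E_i,\tilde E_i$ is swapped relative to the paper's, but the content is identical.
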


\begin{remark}\label{rem:quposBalls}
If $K$ is a non-degenerate two-variable Calder\'on--Zygmund kernel (Definition \ref{def:ndCZK}\eqref{it:ndVar}), then for all large enough $A$ and for every ball $B=B(y_0,r)$ there exists another ball $\tilde B=B(x_0,r)$ with $\abs{x_0-y_0}\eqsim Ar$, where the assumptions, and hence the conclusions, of Lemma \ref{lem:quposBalls} are satisfied.

Indeed, by Proposition \ref{prop:unifNonDegImplies} and case \eqref{it:ndVar} of its proof, we can find an $x_0$ with $\abs{x_0-y_0}\eqsim Ar$ such that
\begin{equation*}
  \abs{K(x_0,y_0)}\eqsim\frac{1}{(Ar)^d},\qquad\abs{K(x,y)-K(x_0,y_0)}\leq\frac{\eps_A}{(Ar)^d}\quad\forall x\in B(x_0,r),\ \forall y\in B(y_0,r).
\end{equation*}
Hence, for suitable $\sigma$, we have
\begin{equation*}
  \Re(\sigma K(x,y))
  \geq\abs{K(x_0,y_0)}-\abs{K(x,y)-K(x_0,y_0)}\eqsim\frac{1}{(Ar)^d}.
\end{equation*}
\end{remark}

\begin{proof}[Proof of Lemma \ref{lem:quposBalls}]
The basic observation is that, if $\alpha\in\R$ and $x\in\tilde B\cap\{ b\leq\alpha\}$, then
\begin{equation*}
\begin{split}
  \int_B (b(y)-\alpha)_+^k\ud y
  &\leq \int_{B\cap\{b\geq\alpha\}} (b(y)-b(x))^k\ud y \\
  &\lesssim (-1)^k \abs{B} \Re\Big(\sigma \int_{B\cap\{b\geq\alpha\}} (b(x)-b(y))^k K(x,y)\ud y\Big),
\end{split}
\end{equation*}
and hence
\begin{equation*}
\begin{split}
  &\abs{\tilde B\cap\{ b\leq\alpha\}} \int_B (b(y)-\alpha)_+^k\ud y \\
  &\lesssim \abs{B}\Babs{\int_{\tilde B\cap\{b\leq\alpha\}}\int_{B\cap\{b\geq\alpha\}} (b(x)-b(y))^k K(x,y)\ud y\ud x} \\
  &= \abs{B}\Babs{\pair{1_{\tilde B\cap\{b\leq\alpha\}}}{T_b^k 1_{B\cap\{b\geq\alpha\}}}}
    =: \abs{B}\cdot\abs{\pair{1_{\tilde E_1}}{T_b^k 1_{E_1}}}.
\end{split}
\end{equation*}

In a completely analogous way, integrating over $x\in\tilde B\cap \{ b\geq\alpha\}$, we also prove that
\begin{equation*}
  \abs{\tilde B\cap\{ b\geq\alpha\}} \int_B (\alpha-b(y))_+^k\ud y
  \lesssim \abs{B}\Babs{\pair{1_{\tilde B\cap\{b\geq\alpha\}}}{T_b^k 1_{B\cap\{b\leq\alpha\}}}}
  = \abs{B}\cdot\abs{\pair{1_{\tilde E_2}}{T_b^k 1_{E_2}}}.
\end{equation*}
Choosing $\alpha$ as a median of $b$ on $\tilde B$, we have
\begin{equation*}
  \min(\abs{\tilde B\cap\{ b\leq\alpha\}},\abs{\tilde B\cap\{ b\geq\alpha\}})\geq\frac12\abs{\tilde B}=\frac12\abs{B},
\end{equation*}
and hence
\begin{equation*}
\begin{split}
  \int_B\abs{b(y)-\alpha}^k\ud y
  =\int_B(b(y)-\alpha)_+^k\ud y
    +\int_B(\alpha-b(y))_+^k\ud y 
   \lesssim \sum_{i=1}^2 \abs{\pair{1_{\tilde E_i}}{T_b^k 1_{E_i}}}.
\end{split}
\end{equation*}
\end{proof}

We present a variant of the result for rough homogeneous kernels. While the conclusion is essentially identical, the proof requires an additional iteration of the basic argument.

\begin{lemma}\label{lem:medianRough}
Let $k\in\{1,2,\ldots\}$, let $\Omega\in L^1(S^{d-1})\setminus\{0\}$ and $\displaystyle K(x)=\frac{\Omega(x/\abs{x})}{\abs{x}^d}$.
Let $\theta_0\in S^{d-1}$ be a Lebesgue point of $K$, where $K(\theta_0)=\Omega(\theta_0)\neq 0$. Let $T$ be an operator with kernel $K(x,y)=K(x-y)$.

Then there is a (large) constant $A$, depending only on the above data, such that every $b\in L^k_{\loc}(\R^d)$ satisfies
the following estimate for every ball $B$:
\begin{equation*}
  \inf_c\int_B\abs{b(y)-c}^k\ud y\lesssim\sum_{i=1}^4\abs{\pair{1_{\tilde E_i}}{T_b^k 1_{E_i}}}
\end{equation*}
for some subsets $E_i\subset B$ and $\tilde E_i\subset\tilde B:=B+Ar_B\theta_0$.
\end{lemma}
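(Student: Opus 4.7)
The plan is to mimic the proof of Lemma \ref{lem:quposBalls}, replacing pointwise positivity of $\Re(\sigma K)$ by the uniform $L^1$-smallness of $K-K(x_0,y_0)$ from Proposition \ref{prop:unifNonDegImplies}, and to run the resulting median argument \emph{twice}---first in the natural direction and then with the roles of $B$ and $\tilde B$ exchanged---so that the complementary error terms can absorb each other.

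Fix a large $A$, set $x_0:=y_0+Ar\theta_0$, $\tilde B:=B(x_0,r)$, and choose $\sigma\in\C$, $\abs{\sigma}=1$, such that $\sigma K(x_0,y_0)=\abs{K(x_0,y_0)}\gtrsim(Ar)^{-d}$. Define the pointwise-positivity set
\begin{equation*}
  G:=\{(x,y)\in \tilde B\times B:\Re(\sigma K(x-y))\geq\tfrac12\abs{K(x_0,y_0)}\}.
\end{equation*}
The estimates \eqref{eq:ndKdiff}, being uniform in the remaining variable, together with Chebyshev, yield that for \emph{every} $y\in B$ the slice $(G^c)^y\subset\tilde B$ has $\abs{(G^c)^y}\lesssim\eps_A\abs{\tilde B}$, and symmetrically $\abs{(G^c)_x}\lesssim\eps_A\abs{B}$ for every $x\in\tilde B$. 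This uniform (not merely averaged) slice control is the input that makes the two-step argument close.

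Let $\alpha$ be a median of $b$ on $\tilde B$ and define $\tilde E_1,E_1,\tilde E_2,E_2$ as in the proof of Lemma \ref{lem:quposBalls}. I would retrace that proof, now decomposing each double integral $\iint_{\tilde E_i\times E_i}(b(x)-b(y))^k\Re(\sigma K)$ into its $G$-part and its $G^c$-part. On $G$, the pointwise inequality $\Re(\sigma K)\gtrsim(Ar)^{-d}$ together with the uniform lower bound $\abs{G^y\cap\tilde E_i}\gtrsim\abs{\tilde B}$ (coming from $\abs{\tilde E_i}\geq\abs{\tilde B}/2$ and $\abs{(G^c)^y}\lesssim\eps_A\abs{\tilde B}$) reproduces the main term $\gtrsim A^{-d}\int_B\abs{b-\alpha}^k$. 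On $G^c$, the crude bounds $\abs{\Re(\sigma K)}\lesssim(Ar)^{-d}$ and $\abs{b(x)-b(y)}^k\lesssim\abs{b(x)-\alpha}^k+\abs{b(y)-\alpha}^k$, combined with Fubini applied in \emph{both} orders and the uniform slice bounds, produce an error of order $\eps_A A^{-d}(\int_B\abs{b-\alpha}^k+\int_{\tilde B}\abs{b-\alpha}^k)$. Summing over $i=1,2$ and absorbing the $\int_B$ part for $A$ large gives
\begin{equation*}
  \int_B\abs{b-\alpha}^k\lesssim\sum_{i=1}^{2}\abs{\pair{1_{\tilde E_i}}{T_b^k 1_{E_i}}}+\eps_A\int_{\tilde B}\abs{b-\alpha}^k.\qquad(\ast)
\end{equation*}

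The ``additional iteration'' is to run the identical computation with a median $\beta$ of $b$ on $B$, partitioning into four analogous sets $\tilde E_3,E_3,\tilde E_4,E_4$ and now lower-bounding the commutator in terms of $\int_{\tilde B}$ rather than $\int_B$. This is legitimate because Proposition \ref{prop:unifNonDegImplies} is symmetric in its two variables, so the same direction $\theta_0$ and phase $\sigma$ can be reused; no nondegeneracy at $-\theta_0$ is needed. The outcome is the mirror estimate
\begin{equation*}
  \int_{\tilde B}\abs{b-\beta}^k\lesssim\sum_{i=3}^{4}\abs{\pair{1_{\tilde E_i}}{T_b^k 1_{E_i}}}+\eps_A\int_B\abs{b-\beta}^k.\qquad(\ast\ast)
\end{equation*}
Using the standard minimisation property of the median in $L^k$, $\int_Q\abs{b-\operatorname{med}_Q b}^k\lesssim\int_Q\abs{b-c}^k$ for every $c\in\R$ (a one-line Chebyshev computation), I would replace $\beta$ by $\alpha$ on both sides of $(\ast\ast)$, substitute the result into the right-hand side of $(\ast)$, and absorb the residual $\eps_A^2\int_B\abs{b-\alpha}^k$ for $A$ fixed large enough. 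This gives $\int_B\abs{b-\alpha}^k\lesssim\sum_{i=1}^{4}\abs{\pair{1_{\tilde E_i}}{T_b^k 1_{E_i}}}$, and the infimum in the lemma follows a fortiori by taking $c=\alpha$.

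The main obstacle is the error on $G^c$ in $(\ast)$: it lives on the ``wrong'' ball $\tilde B$ and cannot be absorbed using $(\ast)$ alone. The delicate point is to bound it by an $\eps_A$-multiple of \emph{exactly} $\int_B\abs{b-\alpha}^k+\int_{\tilde B}\abs{b-\alpha}^k$, which requires using the two-sided uniform slice bounds from Proposition \ref{prop:unifNonDegImplies} and Fubini in both orders; only then does the mirror estimate $(\ast\ast)$ exactly cancel what $(\ast)$ leaves behind.
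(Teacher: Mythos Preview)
Your two-pass strategy---estimate $\int_B\abs{b-\alpha}^k$ with an error on $\tilde B$, then run a mirror estimate and substitute---matches the paper exactly, and your observation that the symmetry of \eqref{eq:ndKdiff} lets you reuse the same $\theta_0$ (rather than passing to $T^*$ and $-\theta_0$ as the paper does) is a legitimate simplification.

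There is, however, a genuine gap in your treatment of the bad set $G^c$. You invoke ``the crude bound $\abs{\Re(\sigma K)}\lesssim(Ar)^{-d}$'', but for a rough homogeneous kernel with $\Omega$ merely in $L^1(S^{d-1})$ this pointwise bound is \emph{false}: although $\abs{x-y}\eqsim Ar$ on $\tilde B\times B$, the numerator $\abs{\Omega((x-y)/\abs{x-y})}$ is in general unbounded, and the Lebesgue-point hypothesis at $\theta_0$ gives only averaged control. Your subsequent use of the slice-measure bounds $\abs{(G^c)^y},\abs{(G^c)_x}\lesssim\eps_A\abs{B}$ cannot compensate, because on $G^c$ the kernel may be large.

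The fix is to abandon the pointwise bound and work purely in $L^1$ on each slice: for every $y\in B$,
\begin{equation*}
  \int_{(G^c)^y}\abs{K(x-y)}\ud x
  \leq\int_{\tilde B}\abs{K(x-y)-K(x_0,y_0)}\ud x+\abs{(G^c)^y}\,\abs{K(x_0,y_0)}
  \lesssim\frac{\eps_A}{A^d}+\eps_A r^d\cdot\frac{1}{(Ar)^d}
  \eqsim\frac{\eps_A}{A^d},
\end{equation*}
and symmetrically with $x$ and $y$ interchanged; from this, Fubini gives exactly the error you claim. In fact, once you argue this way the good set $G$ becomes superfluous: the paper simply writes $K(x_0,y_0)=K(x-y)+[K(x_0,y_0)-K(x-y)]$ in the main identity and integrates the bracket directly via \eqref{eq:ndKdiff}, never splitting $\tilde B\times B$ at all. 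Dropping $G$ would both repair the gap and shorten your argument.
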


\begin{proof}
Given $B=B(y_0,r)$, let $x_0=y_0+Ar\theta_0$, where the large $A$ is yet to be chosen, and $\tilde B=B(x_0,r)$.

The basic observation is that, if $b(x)\leq\alpha$, then
\begin{equation*}
\begin{split}
  &\int_B (b(y)-\alpha)_+^k\ud y
  \leq \int_{B\cap\{b\geq\alpha\}} (b(y)-b(x))^k\ud y \\
  &= \int_{B\cap\{b\geq\alpha\}} (b(y)-b(x))^k\frac{(Ar)^d K(x_0-y_0)}{\Omega(\theta_0)}\ud y \\
  &= \frac{(-1)^k c_d\abs{B}}{\Omega(\theta_0)}A^d \int_{B\cap\{b\geq\alpha\}} (b(x)-b(y))^k [K(x-y)+K(x_0-y_0)-K(x-y)]\ud y \\
\end{split}
\end{equation*}
Hence, taking $\alpha$ as the median of $b$ on $\tilde B$, we have
\begin{equation*}
\begin{split}
  &\int_B (b(y)-\alpha)_+^k\ud y\leq 2\frac{\abs{\tilde B\cap\{b\leq\alpha\}}}{\abs{\tilde B}} \int_B (b(y)-\alpha)_+^k\ud y \\
  &\lesssim  A^d \abs{\pair{1_{\tilde B\cap\{b\leq\alpha\}}}{T_b^k 1_{B\cap\{b\geq\alpha\}}}}+ \\
  &\qquad +A^d\int_{\tilde B\cap\{b\leq\alpha\}}\int_{B\cap\{b\geq\alpha\}} (b(y)-b(x))^k\abs{K(x-y)-K(x_0-y_0)}\ud y\ud x.
\end{split}
\end{equation*}
Estimating
\begin{equation*}
  (b(y)-b(x))^k=(b(y)-\alpha+\alpha-b(x))^k
  \leq c_k(b(y)-\alpha)^k+c_k(\alpha-b(x))^k,
\end{equation*}
the double integral can be dominated by the sum of
\begin{equation*}
  \int_{B\cap\{b\geq\alpha\}}(b(y)-\alpha)^k\Big(\int_{\tilde B\cap\{b\leq\alpha\}}\abs{K(x-y)-K(x_0-y_0)}\ud x\Big)\ud y
\end{equation*}
and
\begin{equation*}
  \int_{\tilde B\cap\{b\leq\alpha\}}(\alpha-b(x))^k\Big(\int_{B\cap\{b\geq\alpha\}}\abs{K(x-y)-K(x_0-y_0)}\ud y\Big)\ud x.
\end{equation*}
Writing $x-y=x_0-y_0+(x-x_0)-(y-y_0)$, both inner integrals are seen to be bounded by
\begin{equation*}
\begin{split}
  \int_{B(0,2r)} &\abs{K(x_0-y_0+z)-K(x_0-y_0)}\ud z \\
  &=\int_{B(0,2r)}\abs{K(Ar\theta_0+z)-K(Ar\theta_0)}\ud z \\
  &=(Ar)^{-d}\int_{B(0,2r)}\abs{K(\theta_0+\frac{z}{Ar})-K(\theta_0)}\ud z \\
  &=\int_{B(0,2/A)}\abs{K(\theta_0+u)-K(\theta_0)}\ud u
  =\eps_A A^{-d},
\end{split}
\end{equation*}
where $\eps_A\to 0$ as $A\to\infty$, by the assumption that $\theta_0$ is a Lebesgue point of $K$.

Substituting back, and observing in particular the cancellation of the factors $A^d$ and $A^{-d}$ in the double integral, we have proved that
\begin{equation*}
\begin{split}
  \int_B (b(y)-\alpha)_+^k\ud y 
  &\leq c_A\abs{\pair{1_{\tilde B\cap\{b\leq\alpha\}}}{T_b^k 1_{B\cap\{b\geq\alpha\}}}} \\
  &\qquad+c\eps_A \int_B(b(y)-\alpha)_+^k\ud y+c\eps_A\int_{\tilde B}(\alpha-b(x))_+^k\ud x,
\end{split}
\end{equation*}
and hence
\begin{equation*}
\begin{split}
  (1-c\eps_A)\int_B (b(y)-\alpha)_+^k\ud y 
  &\leq c_A\abs{\pair{1_{\tilde B\cap\{b\leq\alpha\}}}{T_b^k 1_{B\cap\{b\geq\alpha\}}}} \\
  &\qquad+c\eps_A\int_{\tilde B}(\alpha-b(x))_+^k\ud x.
\end{split}
\end{equation*}
Replacing $(b,\alpha)$ by $(-b,-\alpha)$, we also have
\begin{equation*}
\begin{split}
  (1-c\eps_A)\int_B (\alpha-b(y))_+^k\ud y 
  &\leq c_A\abs{\pair{1_{\tilde B\cap\{b\geq\alpha\}}}{T_b^k 1_{B\cap\{b\leq\alpha\}}}} \\
  &\qquad+c\eps_A\int_{\tilde B}(b(x)-\alpha)_+^k\ud x,
\end{split}
\end{equation*}
and adding the two estimates,
\begin{equation*}
  (1-c\eps_A)\int_B \abs{b(y)-\alpha}^k\ud y 
  \leq c_A\sum_{i=1}^2 \abs{\pair{1_{\tilde E_i}}{T_b^k 1_{E_i}}}
  +c\eps_A\int_{\tilde B}\abs{b(x)-\alpha}^k\ud x.
\end{equation*}
where $E_i\subset B$ and $\tilde E_i\subset\tilde B$ for $i=1,2$.
Recall that $\alpha$ was the median of $b$ on $\tilde B$, but since this choice of $\alpha$ is a quasi-minimiser for the integral on the right, we also deduce the more symmetric version
\begin{equation}\label{eq:moreSymmVers}
  \inf_{\alpha\in\R}\int_B \abs{b(y)-\alpha}^k\ud y 
  \leq c\sum_{i=1}^2 \abs{\pair{1_{\tilde E_i}}{T_b^k 1_{E_i}}}+\frac{1}{2}\inf_{\alpha\in\R}\int_{\tilde B}\abs{b(x)-\alpha}^k\ud x,
\end{equation}
where we have also fixed an $A$ so that $c\eps_A/(1-c\eps_A)\leq 1/2$.

We now apply the same argument to the adjoint
\begin{equation*}
  (T_b^k)^*=(-1)^k(T^*)_b^k.
\end{equation*}
We note that the kernel $K^*$ of $T^*$ is related to the kernel $K$ of $T$ given by $K^*(x,y)=K(y,x)$, and hence it is also a homogeneous kernel with symbol $\Omega^*(\theta)=\Omega(-\theta)$. In particular, the point $-\theta_0$ plays the same role for $T^*$ as $\theta_0$ plays for $T$, and thus the ball $B=\tilde B-Ar\theta_0$ plays the same role for $\tilde B$ and $T^*$ as $\tilde B$ plays for $B$ and $T$. 

 This means that the analogue of \eqref{eq:moreSymmVers} in the adjoint case reads as
\begin{equation*}
  \inf_{\alpha\in\R}\int_{\tilde B} \abs{b(x)-\alpha}^k\ud x 
  \leq c\sum_{i=3}^4 \abs{\pair{1_{\tilde E_i}}{T_b^k 1_{E_i}}}+\frac12\inf_{\alpha\in\R}\int_{B}\abs{b(y)-\alpha}^k\ud y,
\end{equation*}
where again $E_i\subset B$ and $\tilde E_i\subset\tilde B$ for $i=3,4$.
Using \eqref{eq:moreSymmVers} and its adjoint version above consecutively, we have
\begin{equation*}
\begin{split}
  &\inf_{\alpha\in\R}\int_B \abs{b(y)-\alpha}^k\ud y 
  \leq c\sum_{i=1}^2 \abs{\pair{1_{\tilde E_i}}{T_b^k 1_{E_i}}}
  +\frac{1}{2}\inf_{\alpha\in\R}\int_{\tilde B}\abs{b(x)-\alpha}^k\ud x \\
  &\leq c\sum_{i=1}^2 \abs{\pair{1_{\tilde E_i}}{T_b^k 1_{E_i}}} 
  +\frac{1}{2}\Big(c\sum_{i=3}^4 \abs{\pair{1_{\tilde E_i}}{T_b^k 1_{E_i}}}
  +\frac{1}{2}\inf_{\alpha\in\R}\int_B \abs{b(y)-\alpha}^k\ud y \Big) \\
  &\leq c\sum_{i=1}^4 \abs{\pair{1_{\tilde E_i}}{T_b^k 1_{E_i}}}
  +\frac{1}{4}\inf_{\alpha\in\R}\int_B \abs{b(y)-\alpha}^k\ud y,
\end{split}
\end{equation*}
which implies that
\begin{equation*}
  \inf_{\alpha\in\R}\int_B \abs{b(y)-\alpha}^k\ud y 
  \lesssim \sum_{i=1}^4 \abs{\pair{1_{\tilde E_i}}{T_b^k 1_{E_i}}}
\end{equation*}  
as claimed.
\end{proof}

\subsection{The lower bound for higher commutators}

We restate and then prove the ``only if'' parts of Theorem \ref{thm:higher} in the two theorems below, dealing with the cases $p\leq q$ and $p>q$, in analogy with Theorems \ref{thm:pleq1} and \ref{thm:offDiag}.

\begin{theorem}\label{thm:pleqk}
Let $K$ be a non-degenerate Calder\'on--Zygmund kernel, let $k\in\{1,2,\ldots\}$ and $b\in L^k_{\loc}(\R^d)$.
Let further
\begin{equation*}
   1<p\leq q<\infty,\qquad \alpha:=\frac{d}{k}\Big(\frac{1}{p}-\frac{1}{q}\Big)\geq 0,
\end{equation*}
and suppose that $T_b^k$ satisfies the following weak form of $L^p\to L^q$ boundedness:
\begin{equation}\label{eq:veryWeakLpLqk}
\begin{split}
  \abs{\pair{T_b^k f}{g}}
  &=\Babs{\iint (b(x)-b(y))^kK(x,y)f(y)g(x)\ud y\ud x} \\
  &\leq \Theta\cdot \Norm{f}{\infty}\abs{B}^{1/p}\cdot\Norm{g}{\infty}\abs{\tilde B}^{1/q'},
\end{split}
\end{equation}
whenever $f\in L^\infty(B)$, $g\in L^\infty(\tilde B)$ for any two balls of equal radius $r$ and distance $\dist(B,\tilde B)\gtrsim r$.
Then
\begin{itemize}
  \item if $\alpha=0$, equivalently $p=q$, we have  $b\in\BMO(\R^d)$, and
  $\Norm{b}{\BMO}^k\lesssim \Theta;$ 
  \item if $\alpha\in(0,1]$, we have $b\in\dot C^{0,\alpha}(\R^d)$, and
  $\Norm{b}{\dot C^{0,\alpha}}^k\lesssim \Theta;$ 
  \item if $\alpha>0$, the function $b$ is constant, so in fact $T_b^k=0$.
\end{itemize}
\end{theorem}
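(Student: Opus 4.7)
The plan is to follow the same template as the proof of Theorem \ref{thm:pleq1}, but replacing the approximate weak factorisation input (Lemma \ref{lem:dec2}) by the median-method bounds just established (Lemma \ref{lem:quposBalls} together with Remark \ref{rem:quposBalls} in the two-variable case, and Lemma \ref{lem:medianRough} in the homogeneous case). The gain of the median method is that it handles iterated commutators directly, while the cost is that we lose complex-valued $b$, which is fine here.

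First I would fix an arbitrary ball $B\subset\R^d$ of radius $r$. By the appropriate median lemma, applied to the non-degenerate Calder\'on--Zygmund kernel $K$ (and, in case \eqref{it:ndVar}, with $A$ fixed large enough so that Remark \ref{rem:quposBalls} applies), there exist a companion ball $\tilde B$ of the same radius $r$ at distance $\dist(B,\tilde B)\eqsim Ar\gtrsim r$, finitely many subsets $E_i\subset B$ and $\tilde E_i\subset \tilde B$ (with $i$ ranging over an index set of cardinality $\leq 4$), and $c\in\R$ such that
\begin{equation*}
  \int_B\abs{b(y)-c}^k\ud y\lesssim\sum_i\abs{\pair{1_{\tilde E_i}}{T_b^k 1_{E_i}}}.
\end{equation*}
Applying the hypothesis \eqref{eq:veryWeakLpLqk} to each of these off-support pairings, and recalling $|E_i|\leq|B|\eqsim r^d$ and $|\tilde E_i|\leq|\tilde B|\eqsim r^d$, I obtain
\begin{equation*}
  \int_B\abs{b(y)-c}^k\ud y
  \lesssim \Theta\cdot\abs{B}^{1/p}\abs{\tilde B}^{1/q'}
  \lesssim \Theta\cdot r^{d(1/p+1/q')}
  = \Theta\cdot r^d \cdot r^{k\alpha},
\end{equation*}
since $d(1/p-1/q)=k\alpha$. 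Dividing by $\abs{B}\eqsim r^d$ and using Jensen's inequality I get
\begin{equation*}
  \fint_B\abs{b-\ave{b}_B}
  \leq 2\inf_{c\in\R}\fint_B\abs{b-c}
  \leq 2\Big(\fint_B\abs{b-c}^k\Big)^{1/k}
  \lesssim \Theta^{1/k}\cdot r^\alpha.
\end{equation*}

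At this point the conclusion is reached exactly as in the $k=1$ case. If $\alpha=0$, the displayed bound is uniform in $B$, so $b\in\BMO(\R^d)$ with $\Norm{b}{\BMO}^k\lesssim\Theta$. If $\alpha>0$, the reduction to $\dot C^{0,\alpha}(\R^d)$ for $\alpha\in(0,1]$ and to constants for $\alpha>1$ proceeds verbatim as in the proof of Theorem \ref{thm:pleq1}, via telescoping averages and the two-ball estimate \eqref{eq:2balls} (now with $\Theta$ replaced by $\Theta^{1/k}$), and, for $\alpha>1$, the chain argument $y_k=x_1+N^{-1}k(x_2-x_1)$ with $N\to\infty$.

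The only genuinely new ingredient compared with Theorem \ref{thm:pleq1} is the median input, which is already isolated in Lemmas \ref{lem:quposBalls} and \ref{lem:medianRough}; the main technical subtlety I anticipate is purely bookkeeping, namely ensuring that the geometric configuration of $(B,\tilde B)$ produced by these lemmas matches the off-support hypothesis $\dist(B,\tilde B)\gtrsim r$ required in \eqref{eq:veryWeakLpLqk}. In the rough homogeneous case this is immediate from $\dist(B,\tilde B)=(A-2)r$, and in the two-variable case from the construction in the proof of Proposition \ref{prop:unifNonDegImplies}\eqref{it:ndVar}, so no absorption step of the type needed for the approximate weak factorisation argument is required here.
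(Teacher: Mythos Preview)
Your proposal is correct and follows essentially the same approach as the paper's own proof: apply the median-method lemmas (Lemma \ref{lem:quposBalls} with Remark \ref{rem:quposBalls}, or Lemma \ref{lem:medianRough}) to bound $\inf_c\fint_B\abs{b-c}^k$ by the off-support pairings, estimate these via \eqref{eq:veryWeakLpLqk}, and then finish as in Theorem \ref{thm:pleq1}. The only cosmetic difference is that the paper writes the Jensen step as $(\fint_B\abs{b-\ave{b}_B})^k\leq\fint_B\abs{b-\ave{b}_B}^k$ directly rather than passing through $\inf_c$, which is equivalent.
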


\begin{proof}
Consider a ball $B$ of radius $r$.
From Lemma \ref{lem:quposBalls} and Remark \ref{rem:quposBalls} or Lemma \ref{lem:medianRough} (depending whether $K$ is a two-variable or rough homogeneous kernel), it is immediate that
\begin{equation}\label{eq:iterBasic}
  \Big(\fint_{B}\abs{b-\ave{b}_B}\Big)^k
  \leq\fint_{B}\abs{b-\ave{b}_{B}}^k
  \lesssim \sum_{i=1}^4 \frac{ \abs{ \pair{1_{\tilde E_j} }{ T_b^k 1_{E_j} } } }{\abs{B}},
\end{equation}
for some subsets $E_j\subset B$, $\tilde E_j\subset\tilde B$, where $\tilde B$ is a ball of the same radius $r$ and $\dist(B,\tilde B)\gtrsim r$.
By assumption \eqref{eq:veryWeakLpLqk}, it follows that
\begin{equation*}
  \abs{ \pair{1_{\tilde E_j} }{ T_b^k 1_{E_j} }}
  \leq \Theta\abs{B}^{1/p}\abs{B}^{1/q'}
  =\Theta\abs{B}^{1/p-1/q+1}=\Theta\abs{B}r^{\alpha k}
\end{equation*}
and hence
\begin{equation*}
  \fint_{B}\abs{b-\ave{b}_{B}}\lesssim \Theta^{1/k}r^\alpha.
\end{equation*}
From this the rest follows as in the proof of Theorem \ref{thm:pleq1}.
\end{proof}

\begin{theorem}\label{thm:offDiagk}
Let $K$ be a non-degenerate Calder\'on--Zygmund kernel, let $k\in\{1,2,\ldots\}$, and $b\in L^k_{\loc}(\R^d;\R)$. Let
\begin{equation*}
  1<q<p<\infty,\qquad r=\frac{pq}{p-q}\in(1,\infty),
\end{equation*}
and suppose that $T_b^k$ satisfies the following weak form of $L^p\to L^q$ boundedness:
\begin{equation}\label{eq:quiteWeakLpLqk}
  \sum_{i=1}^N\abs{\pair{T_b^k f_i}{g_i}}
  \leq \Theta\BNorm{\sum_{i=1}^N\Norm{f_i}{\infty} 1_{Q_i}}{p}
    \BNorm{\sum_{i=1}^N\Norm{g_i}{\infty} 1_{\tilde Q_i}}{q'},
\end{equation}
whenever, for each $i=1,\ldots,N$, we have $f_i\in L^\infty(Q_i)$ and $g_i\in L^\infty(\tilde Q_i)$ for cubes $Q_i$ and $\tilde Q_i$ such that $\dist(Q_i,\tilde Q_i)\gtrsim\diam(Q_i)=\diam(\tilde Q_i)$.

Then $b=a+c$ for some $a\in L^{rk}(\R^d)$ and some constant $c\in\C$, where
$  \Norm{a}{rk}\lesssim\Theta.$
\end{theorem}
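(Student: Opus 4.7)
The plan is to combine the median-method estimates (Lemmas \ref{lem:quposBalls} and \ref{lem:medianRough}) with a sparse decomposition of $b$, and then to exploit the bilinear hypothesis \eqref{eq:quiteWeakLpLqk} in a carefully weighted dualised form, in analogy with the $k=1$ argument of Theorem \ref{thm:offDiag}. First I would fix a cube $Q_0$ and apply Lerner's formula (Lemma \ref{lem:Lerner}) to extract a sparse collection $\mathcal{Q}$ of dyadic subcubes of $Q_0$, with pairwise disjoint major subsets $E(Q)$, satisfying
\begin{equation*}
  1_{Q_0}|b-\ave{b}_{Q_0}|\lesssim\sum_{Q\in\mathcal{Q}}\mu_Q 1_Q,\qquad\mu_Q:=\fint_Q|b-\ave{b}_Q|.
\end{equation*}
For a large fixed $A$ depending only on the kernel, Lemma \ref{lem:quposBalls} (via Remark \ref{rem:quposBalls}) or Lemma \ref{lem:medianRough} then supplies, for each $Q\in\mathcal{Q}$, a cube $\tilde Q$ of the same side length at distance $\eqsim A\ell(Q)$ and a bounded number of pairs of subsets $(E_i^Q\subset Q,\tilde E_i^Q\subset\tilde Q)$ for which
\begin{equation*}
  \mu_Q^k\lesssim\fint_Q|b-\ave{b}_Q|^k\lesssim\frac{S_Q}{|Q|},\qquad S_Q:=\sum_i\bigl|\pair{1_{\tilde E_i^Q}}{T_b^k 1_{E_i^Q}}\bigr|.
\end{equation*}

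Taking the $L^{rk}$ norm in the Lerner bound and using sparseness via the first part of Lemma \ref{lem:Doob} together with the disjointness of the $E(Q)$, I would get
\begin{equation*}
  \|b-\ave{b}_{Q_0}\|_{L^{rk}(Q_0)}^{rk}\lesssim\sum_Q\mu_Q^{rk}|E(Q)|\lesssim\sum_Q\Bigl(\frac{S_Q}{|Q|}\Bigr)^{r}|Q|=:W,
\end{equation*}
so it suffices to show $W\lesssim\Theta^r$. The key arithmetic observation, arising from $1/r=1/q-1/p$, is that $r/p+r/q'=r-1$. This motivates the weight split $\beta_Q:=(S_Q/|Q|)^{r/p}$, $\alpha_Q:=(S_Q/|Q|)^{r/q'}$, yielding $\alpha_Q\beta_Q=(S_Q/|Q|)^{r-1}$ and hence, with signs chosen to absorb the absolute values,
\begin{equation*}
  W=\sum_Q\alpha_Q\beta_Q S_Q=\sum_{Q,i}\bigl|\pair{\sigma'_{Q,i}\alpha_Q 1_{\tilde E_i^Q}}{T_b^k(\sigma_{Q,i}\beta_Q 1_{E_i^Q})}\bigr|.
\end{equation*}

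Feeding this into \eqref{eq:quiteWeakLpLqk} and estimating the two resulting norms by the first part of Lemma \ref{lem:Doob} on the $L^p$ side, together with its second part at a fixed cost $A^d$ to transfer $1_{\tilde Q}$ back to $1_Q$ on the $L^{q'}$ side, I expect the self-improving inequality
\begin{equation*}
  W\lesssim\Theta A^d\,W^{1/p+1/q'}=\Theta A^d\,W^{1-1/r},
\end{equation*}
which, once $W<\infty$ is known, yields $W\lesssim\Theta^r$. The needed a priori finiteness will be arranged by first running the argument with the finite truncations $\mathcal{Q}^N\subset\mathcal{Q}$ of cubes of dyadic generation at most $N$ in $Q_0$ (so each $W^N<\infty$ trivially) and then passing $N\to\infty$ via monotone convergence. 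Combining with the previous display gives $\|b-\ave{b}_{Q_0}\|_{L^{rk}(Q_0)}^{k}\lesssim\Theta$ uniformly in $Q_0$, and Lemma \ref{lem:loc2glob} (with exponent $rk$) then produces the required $b=a+c$ with $\|a\|_{L^{rk}(\R^d)}^{k}\lesssim\Theta$.

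The main obstacle I anticipate is the mismatch between the $k$-th power in the median estimate and the linearity of \eqref{eq:quiteWeakLpLqk}; the resolution is the clean identity $rk\cdot(1/k)=r$, which allows $\mu_Q\lesssim(S_Q/|Q|)^{1/k}$ to interact neatly with the $L^{rk}$ sparse bound. Equally delicate is the weight split $\alpha_Q\beta_Q=(S_Q/|Q|)^{r-1}$: its existence relies on the identity $r/p+r/q'=r-1$, and its purpose is to manufacture the same quantity $W$ on both sides of \eqref{eq:quiteWeakLpLqk} so as to trigger the self-improvement. Here this plays the role that the approximate weak factorisation played in the $k=1$ argument of Theorem \ref{thm:offDiag}.
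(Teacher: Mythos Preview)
Your argument is correct and uses the same ingredients as the paper (Lerner's sparse decomposition in Lemma \ref{lem:Lerner}, the median-method bounds of Lemmas \ref{lem:quposBalls}/\ref{lem:medianRough}, Lemma \ref{lem:Doob}, and Lemma \ref{lem:loc2glob}), but the way you feed these into the hypothesis \eqref{eq:quiteWeakLpLqk} is genuinely different. The paper proceeds by two successive dualisations: first it pairs $\|b-\ave{b}_{Q_0}\|_{L^{kr}(Q_0)}$ against a test function $\phi\in L^{(kr)'}$, applies H\"older to isolate the quantity $\big(\sum_j(S_{Q_j}/|Q_j|)^r|Q_j|\big)^{1/(kr)}$, and then dualises this $\ell^r$-type expression against a sequence $(\lambda_j)$ with $\sum_j\lambda_j^{r'}|Q_j|\le 1$, splitting $\lambda_j=\lambda_j^{r'/p}\cdot\lambda_j^{r'/q'}$ before invoking \eqref{eq:quiteWeakLpLqk}. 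You instead bypass both dualisations by choosing the weights $\alpha_Q,\beta_Q$ to be powers of $S_Q/|Q|$ itself, which makes the same quantity $W$ reappear on the right-hand side and yields the bootstrap $W\lesssim\Theta\,W^{1-1/r}$. Your route is a little more direct (no auxiliary $\phi$ or $\lambda_j$), at the cost of having to justify the a priori finiteness of $W$ via truncation, which you handle correctly. Both approaches give $\|b-\ave{b}_{Q_0}\|_{L^{rk}(Q_0)}\lesssim\Theta^{1/k}$ uniformly in $Q_0$, matching what the paper's own proof establishes.
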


\begin{proof}
Let us fix some (large) cube $Q_0\subset\R^d$. We apply Lemma \ref{lem:Lerner} to find that
\begin{equation*}
  1_{Q_0}\abs{b-\ave{b}_{Q_0}}
  \lesssim \sum_{Q\in\mathcal Q} 1_Q\fint_Q\abs{b-\ave{b}_Q}
  \leq \sum_{j=1}^\infty 1_{Q_j}\Big(\fint_{Q_j}\abs{b-\ave{b}_{Q_j}}^k\Big)^{1/k},
\end{equation*}
where we also introduced an enumeration of the sparse collection $\mathcal Q$ of dyadic subcubes of $Q_0$ given by Lemma \ref{lem:Lerner}.

By Lemma \ref{lem:quposBalls} and Remark \ref{rem:quposBalls}, or Lemma \ref{lem:medianRough} in the rough homogeneous case,
we have
\begin{equation*}
  \Big(\fint_{Q_j}\abs{b-\ave{b}_{Q_j}}^k\Big)^{1/k}
  \lesssim \sum_{i=1}^4 \Big(\frac{ \abs{ \pair{1_{\tilde E^i_j} }{ T_b^k 1_{E^i_j} } } }{\abs{Q_j}}\Big)^{1/k}. 
\end{equation*}
for some subsets $E^i_j\subset Q_j$ and $\tilde E^i_j\subset \tilde Q_j$, where the cube $\tilde Q_j$ satisfies $\dist(Q_j,\tilde Q_j)\gtrsim\diam(Q_j)=\diam(\tilde Q_j)$.
Hence
\begin{equation*}
  \Norm{b-\ave{b}_{Q_0}}{L^{kr}(Q_0)}
  \lesssim\sup\Big\{\sum_{i=1}^4\sum_{j=1}^\infty\Big(\frac{ \abs{ \pair{1_{\tilde E^i_j} }{ T_b^k 1_{E^i_j} } } }{\abs{Q_j}}\Big)^{1/k} \int_{Q_j}\phi:\Norm{\phi}{L^{(kr)'}}\leq 1\Big\}.
\end{equation*}
It is enough to give a uniform bound for the finite sums
\begin{equation*}
\begin{split}
  &\sum_{j=1}^N\Big(\frac{ \abs{ \pair{1_{\tilde E^i_j} }{ T_b^k 1_{E^i_j} } } }{\abs{Q_j}}\Big)^{1/k}\abs{Q_j} \fint_{Q_j}\phi \\
  &\leq \Big(\sum_{j=1}^N\Big(\frac{ \abs{ \pair{1_{\tilde E^i_j} }{ T_b^k 1_{E^i_j} } } }{\abs{Q_j}}\Big)^{r}\abs{Q_j}\Big)^{1/(kr)}
    \Big(\sum_{j=1}^N\abs{Q_j} \big[\fint_{Q_j}\phi\big]^{(kr)'}\Big)^{1/(kr)'}.
\end{split}
\end{equation*}
Using the sparseness of $\mathcal Q\supset\{Q_j\}_{j=1}^N$, we can bound the second factor by
\begin{equation*}
  \sum_{j=1}^N\abs{Q_j} \big[\fint_{Q_j}\phi\big]^{(kr)'}
  \lesssim\sum_{j=1}^N\abs{E(Q_j)}\inf_{z\in Q_j}M\phi(z)^{(kr)'}
  \leq\int (M\phi)^{(kr)'}\lesssim\int \phi^{(kr)'}\leq 1.
\end{equation*}
We dualise the first factor with $\sum_{j=1}^N \lambda_j^{r'}\abs{Q_j}\leq 1$ to end up considering
\begin{equation}\label{eq:pgeqMedStep}
\begin{split}
  &\sum_{j=1}^N \frac{  \abs{\pair{1_{\tilde E^i_j} }{ T_b^k 1_{E^i_j} } } }{\abs{Q_j}}\lambda_j \abs{Q_j} 
   =\sum_{j=1}^N  \abs{\pair{ \lambda_j^{r'/q} 1_{\tilde E^i_j} }{ T_b^k (\lambda_j^{r'/p} 1_{E^i_j} ) }}    \\
  &\leq \Theta\BNorm{\sum_{j=1}^N \lambda_j^{r'/q'}1_{\tilde Q_j}}{q'} \BNorm{\sum_{j=1}^N \lambda_j^{r'/p}1_{Q_j}}{p},
\end{split}
\end{equation}
where we used the assumption \eqref{eq:quiteWeakLpLqk} in the last step.

By Lemma \ref{lem:Doob}, we have, using the disjoint major subsets $E(Q_j)\subset Q_j$,
\begin{equation*}
   \BNorm{\sum_{j=1}^N \lambda_j^{r'/p}1_{Q_j}}{p}
   \lesssim \BNorm{\sum_{j=1}^N \lambda_j^{r'/p}1_{E(Q_j)}}{p}
   =\Big(\sum_{j=1}^N \lambda_j^{r'}\abs{E(Q_j)}\Big)^{1/p}\leq 1.
\end{equation*}
For the first factor on the right of \eqref{eq:pgeqMedStep}, we obtain a similar bound by starting with
\begin{equation*}
  \BNorm{\sum_{j=1}^N \lambda_j^{r'/q'}1_{\tilde Q_j}}{q'} 
  \lesssim \BNorm{\sum_{j=1}^N \lambda_j^{r'/q'}1_{Q_j}}{q'},
\end{equation*}
which also follows from Lemma \ref{lem:Doob}, and then finishing as before.

We have now proved that
\begin{equation*}
  \Norm{b-\ave{b}_{Q_0}}{L^{kr}(Q_0)}\lesssim \Theta^{1/k}
\end{equation*}
for any cube $Q_0\subset\R^d$. This shows in particular that $b\in L^{kr}_{\loc}(\R^d)$, and we conclude by Lemma \ref{lem:loc2glob}.
\end{proof}

\subsection{Two-weight norm inequalities of Bloom type}\label{ss:two-weight}

We finally discuss the boundedness of commutators between weighted $L^p$ spaces with weights from the Muckenhoupt class
\begin{equation*}
  A_p(\R^d)=\Big\{w\in L^1_{\loc}(\R^d):w>0\text{ a.e.},\  [w]_{A_p}:=\sup_B \fint_B w\Big(\fint_B w^{-\frac{1}{p-1}}\Big)^{p-1}<\infty\Big\},
\end{equation*}
where the supremum is over all balls $B\subset\R^d$. We consider $p\in(1,\infty)$ fixed throughout this discussion, and denote by $w':=w^{-\frac{1}{p-1}}$ the {\em dual weight}.
 One checks that $w\in A_p$ if and only if $w'\in A_{p'}$. The space $L^{p'}(w')$ is the dual of $L^p(w)$ with respect to the {\em unweighted} duality $\pair{f}{g}=\int fg$. We will identify a weight and its induced measure, using notation like $w(Q):=\int_Q w$.

We will be concerned with the boundedness of
\begin{equation*}
  T_b^k:L^p(\mu)\to L^p(\lambda),\qquad\mu,\lambda\in A_p(\R^d),
\end{equation*}
i.e., we allow two different weights on the domain and the target space, but (in contrast to the rest of the paper) we restrict the Lebesgue exponents to $p=q\in(1,\infty)$. This fits with the line of investigation that was started by Bloom \cite{Bloom:85} and that has been recently revived by Holmes, Lacey, and Wick \cite{HLW}, followed by several others as we shortly recall. Here we complete the following picture:

\begin{theorem}\label{thm:Bloom}
Let $T$ be a non-degenerate Calder\'on--Zygmund operator, let $k\in\{1,2,\ldots\}$ and $b\in L^k_{\loc}(\R^d;\R)$.
Let further $p\in(1,\infty)$ and $\lambda,\mu\in A_p(\R^d)$.
Then $T_b^k$ defines a bounded operator $L^p(\mu)\to L^p(\lambda)$ if and only if
\begin{equation*}
  \Norm{b}{\BMO(\nu^{1/k})}:=\sup_B\frac{1}{\nu^{1/k}(B)}\int_B\abs{b-\ave{b}_B}<\infty,\qquad\nu:=(\mu/\lambda)^{1/p}.
\end{equation*}
\end{theorem}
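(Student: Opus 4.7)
The ``if'' direction follows from the sparse-domination literature on Bloom-type two-weight bounds (Holmes--Lacey--Wick for $k=1$, and extensions to higher $k$ and to rough kernels by Lerner--Ombrosi--Rivera-R\'{\i}os and subsequent authors), so I concentrate on the ``only if'' direction. The strategy is to run a weighted refinement of the median method of Section 4.

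I would start by inserting positive weights $h(y)$ and $\tilde h(x)$ into the proof of Lemma \ref{lem:quposBalls} (and its rough-kernel analogue, Lemma \ref{lem:medianRough}): multiplying the pointwise inequality $(b(y)-\alpha)_+^k\leq(b(y)-b(x))^k$ on $(B\cap\{b\geq\alpha\})\times(\tilde B\cap\{b\leq\alpha\})$ by $h(y)$ and $\tilde h(x)$ and integrating, the quantitative positivity of $K$ (from Remark \ref{rem:quposBalls} or Lemma \ref{lem:medianRough}) yields the weighted median estimate
\begin{equation*}
  \Bigl(\int_{\tilde E}\tilde h\Bigr)\Bigl(\int_B|b-\alpha|^k h\Bigr) \lesssim |B|\cdot\sum_i\bigl|\langle\tilde h\, 1_{\tilde E_i},T_b^k(h\, 1_{E_i})\rangle\bigr|,
\end{equation*}
where $E_i,\tilde E_i$ are the two pairs of median level sets, and $\alpha$ is the $\tilde h$-weighted median of $b$ on $\tilde B$ (so that $\int_{\tilde E_i}\tilde h\gtrsim\int_{\tilde B}\tilde h$ for both choices).

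I would then align the weights with the Bloom target by setting $h=(\nu^{1/k}/\mu)^{1/p}$ and $\tilde h=(\nu^{1/k}/\lambda')^{1/p'}$, so that $\|h\, 1_{E_i}\|_{L^p(\mu)}^p=\nu^{1/k}(E_i)$ and $\|\tilde h\, 1_{\tilde E_i}\|_{L^{p'}(\lambda')}^{p'}=\nu^{1/k}(\tilde E_i)$. The assumed operator bound, combined with doubling of $\nu^{1/k}$ (which lies in $A_\infty$ since $\mu,\lambda\in A_p$) across the comparable balls $B$ and $\tilde B$, gives
\begin{equation*}
  \int_B|b-\alpha|^k h \lesssim \frac{\Theta\,|B|\,\nu^{1/k}(B)}{\int_{\tilde B}\tilde h}.
\end{equation*}
For $k>1$, H\"older's inequality $\int_B|b-\alpha|\leq \bigl(\int_B|b-\alpha|^k h\bigr)^{1/k}\bigl(\int_B h^{-1/(k-1)}\bigr)^{(k-1)/k}$ reduces the desired estimate $\int_B|b-\alpha|\lesssim\Theta^{1/k}\nu^{1/k}(B)$ to the algebraic inequality
\begin{equation*}
  \frac{|B|\,\nu^{1/k}(B)}{\int_{\tilde B}\tilde h}\cdot\Bigl(\int_B h^{-1/(k-1)}\Bigr)^{k-1} \lesssim \nu^{1/k}(B)^k;
\end{equation*}
the case $k=1$ bypasses this H\"older step and recovers the Bloom/Holmes--Lacey--Wick conclusion directly from the weighted median estimate.

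The main technical obstacle is the verification of the displayed algebraic inequality: the weights $\mu$ and $\lambda$ enter asymmetrically (one directly, the other through its dual $\lambda'$), and the required bound is a reverse-H\"older-type estimate that depends delicately on the joint $A_p$ structure of $(\mu,\lambda)$ on the two balls, with careful bookkeeping of the exponents $p$ and $k$. Once this is in hand, replacing $\alpha$ by $\langle b\rangle_B$ at the cost of a constant gives $\|b\|_{\BMO(\nu^{1/k})}\lesssim\Theta^{1/k}$ as required.
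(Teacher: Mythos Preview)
Your approach is workable in spirit but substantially more complicated than the paper's, and the step you flag as the ``main technical obstacle'' is where all the difficulty has been concentrated unnecessarily. The paper avoids weighted test functions entirely: it applies the median-method estimate \eqref{eq:iterBasic} with plain indicators $1_{E_i},1_{\tilde E_i}$, obtaining
\[
  \Bigl(\fint_B|b-\ave{b}_B|\Bigr)^k \lesssim \frac{1}{|B|}\sum_i\bigl|\langle 1_{\tilde E_i},T_b^k 1_{E_i}\rangle\bigr|
  \leq \Theta\,\frac{\mu(B)^{1/p}\lambda'(\tilde B)^{1/p'}}{|B|},
\]
and then isolates the entire weight-theoretic content in the single clean inequality (Lemma~\ref{lem:basicAp})
\[
  \frac{\mu(B)^{1/p}\lambda'(\tilde B)^{1/p'}}{|B|}\lesssim \ave{\nu^{1/k}}_B^k,
\]
which follows in three lines: doubling of $\lambda'$, the $A_p$ bounds $\ave{\mu}_B^{1/p}\lesssim\ave{\mu'}_B^{-1/p'}$ and $\ave{\lambda'}_B^{1/p'}\lesssim\ave{\lambda}_B^{-1/p}$, then H\"older to identify $\ave{(\mu')^{1/p'}\lambda^{1/p}}_B=\ave{\nu^{-1}}_B$ and finally $\ave{\nu^{-1}}_B^{-1}\leq\ave{\nu^{1/k}}_B^k$.

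Your weighted test functions $h,\tilde h$ pre-align the pairing with $\nu^{1/k}$, but the price is the H\"older step with $h^{-1/(k-1)}$: the displayed inequality you are left with mixes $\mu$, $\lambda'$, and $\nu^{1/k}$ at exponents like $1/(p(k-1))$ that do not factor cleanly, and you have not verified it. Note also that your $k=1$ case is not as direct as you state: with your choice of $h$ the left-hand side is $\int_B|b-\alpha|\,h$, not $\int_B|b-\alpha|$, so a further comparison is still owed, and that comparison is again an $A_p$ computation of exactly the Lemma~\ref{lem:basicAp} type. The upshot is that there is no gain in building the Bloom weight into the test functions; the unweighted median estimate already reduces everything to a single transparent $A_p$ inequality.
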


The first version of Theorem \ref{thm:Bloom}, when $k=d=1$ and $T$ is the Hilbert transform, is due to Bloom \cite{Bloom:85}.  
Still for first order commutators $(k=1)$ but in arbitrary dimension $d\geq 1$, Holmes, Lacey, and Wick \cite{HLW} proved the ``if'' part of Theorem \ref{thm:Bloom} for all standard Calder\'on--Zygmund operators, and the ``only if'' part assuming the boundedness of each of the $d$ Riesz transforms $R_i$, $i=1,\ldots,d$, thus extending the exact scope (in terms of operators) of the classical Coifman--Rochberg--Weiss theorem  \cite{CRW} to the two-weight setting. The first two-weight result for iterated commutators was achieved in the ``if'' direction by Holmes and Wick \cite{HolWick} (with a simplified proof in \cite{Hyt:HolWick}): they obtained the boundedness of $T_b^k:L^p(\mu)\to L^p(\lambda)$ for any $k\geq 1$ under the stronger condition that $b\in\BMO(\nu)\cap\BMO(\R^d)\subset\BMO(\nu^{1/k})$. (For the inclusion, which in general is strict, see \cite[Lemma 4.7]{LORR}.) Finally, Lerner, Ombrosi, and Rivera-R{\'{\i}}os \cite{LORR}  obtained Theorem \ref{thm:Bloom} almost as stated: they identified the correct BMO space with the weight $\nu^{1/k}$ depending on the order $k$ of the commutator, and they proved the ``if'' part of Theorem \ref{thm:Bloom} for all standard Calder\'on--Zygmund operators and the ``only if'' part for all homogeneous Calder\'on--Zygmund operators with the fairly general local positivity assumption discussed in Section \ref{ss:intro-nec}. For us, it remains to prove this ``only if'' part assuming non-degeneracy only, and more precisely we prove:

\begin{theorem}\label{thm:Bloom2}
Let $K$ be a non-degenerate Calder\'on--Zygmund kernel, let $k\in\{1,2,\ldots\}$ and $b\in L^k_{\loc}(\R^d;\R)$.
Let $p\in(1,\infty)$, let $\lambda,\mu\in A_p(\R^d)$,
and suppose that $T_b^k$ satisfies the following weak form of $L^p(\mu)\to L^p(\lambda)$ boundedness:
\begin{equation}\label{eq:veryWeakBloom}
\begin{split}
  \abs{\pair{T_b^k f}{g}}
  &=\Babs{\iint (b(x)-b(y))^kK(x,y)f(y)g(x)\ud y\ud x} \\
  &\leq \Theta\cdot \Norm{f}{\infty}\mu(B)^{1/p}\cdot\Norm{g}{\infty}\lambda'(\tilde B)^{1/p'},
\end{split}
\end{equation}
whenever $f\in L^\infty(B)$, $g\in L^\infty(\tilde B)$ for any two balls of equal radius $r$ and distance $\dist(B,\tilde B)\gtrsim r$.
Then $b\in\BMO(\nu^{1/k})$, where $\nu=(\mu/\lambda)^{1/p}$, and more precisely
\begin{equation*}
   \Norm{b}{\BMO(\nu^{1/k})}\lesssim \Theta^{1/k}.
\end{equation*}
\end{theorem}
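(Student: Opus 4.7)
The plan is to combine the median estimates of this section with some classical manipulations with Muckenhoupt weights. For a fixed ball $B$, I would invoke Lemma \ref{lem:quposBalls} together with Remark \ref{rem:quposBalls} (or Lemma \ref{lem:medianRough} for homogeneous kernels) to produce a companion ball $\tilde B$ of the same radius at distance $\gtrsim r_B$, a real median $\alpha$ of $b$ on $\tilde B$, and pairs $E_i\subset B$, $\tilde E_i\subset\tilde B$ satisfying
\begin{equation*}
  \int_B|b-\alpha|^k\ud x\lesssim\sum_i\abs{\pair{1_{\tilde E_i}}{T_b^k 1_{E_i}}}.
\end{equation*}
Each term on the right is estimated by the hypothesis \eqref{eq:veryWeakBloom} with $\Norm{1_{E_i}}{\infty}=\Norm{1_{\tilde E_i}}{\infty}=1$, and the doubling of $\lambda'\in A_{p'}$ allows replacing $\lambda'(\tilde B)$ by $\lambda'(B)$, yielding
\begin{equation*}
  \int_B|b-\alpha|^k\ud x\lesssim\Theta\,\mu(B)^{1/p}\lambda'(B)^{1/p'}.
\end{equation*}

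Next, I would invoke the Bloom weight identity $\mu(B)^{1/p}\lambda'(B)^{1/p'}\lesssim\nu(B)$, which is a quick consequence of the $A_p$-conditions on $\mu$ and $\lambda$. Using the algebraic identity $\lambda^{1/p}=\lambda'^{-1/p'}$ and its $\mu$-counterpart, one has $\nu^{-1}=\mu'^{1/p'}\lambda^{1/p}$, so H\"older gives $\nu^{-1}(B)\leq\mu'(B)^{1/p'}\lambda(B)^{1/p}$, while the Cauchy--Schwarz inequality gives the universal lower bound $\nu(B)\,\nu^{-1}(B)\geq(\int_B\sqrt{\nu\cdot\nu^{-1}})^2=|B|^2$. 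Rearranging and invoking the $A_p$-conditions in the form $\mu(B)^{1/p}\mu'(B)^{1/p'}\lesssim|B|$ and $\lambda(B)^{1/p}\lambda'(B)^{1/p'}\lesssim|B|$ yields the claim; the same reasoning also shows $\nu\in A_2\subset A_\infty$. Substituting back,
\begin{equation*}
  \int_B|b-\alpha|^k\ud x\lesssim\Theta\,\nu(B).
\end{equation*}

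Finally, I would pass from this $L^k$-oscillation bound to the Bloom $L^1$-bound by H\"older followed by reverse Jensen: H\"older's inequality gives $\int_B|b-\alpha|\ud x\leq|B|^{1/k'}(\int_B|b-\alpha|^k\ud x)^{1/k}\lesssim\Theta^{1/k}|B|^{1/k'}\nu(B)^{1/k}$, and the reverse Jensen inequality for the $A_\infty$ weight $\nu$, namely $(\fint_B\nu)^{1/k}\lesssim\fint_B\nu^{1/k}$, converts the factor $|B|^{1/k'}\nu(B)^{1/k}=|B|(\fint_B\nu)^{1/k}$ into $\lesssim\nu^{1/k}(B)$. Passing from $\alpha$ to $\ave{b}_B$ using that the latter is a quasi-minimiser of $c\mapsto\int_B|b-c|$, and taking the supremum over $B$, concludes the proof.

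The genuinely new input of the argument is the Bloom weight identity in the middle paragraph; the rest is essentially a careful assembly of H\"older-type weighted estimates. The main delicate point is to apply the Bloom identity \emph{before} extracting the $k$-th root, so that the subsequent H\"older step and the reverse Jensen inequality can combine to produce precisely the weight $\nu^{1/k}$ demanded by the Bloom BMO norm; applying these steps in the wrong order (e.g., taking the $k$-th root first and then trying to convert the weighted estimate term by term) systematically leads to the wrong power of $\nu$ on the right-hand side.
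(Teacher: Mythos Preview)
Your proof is correct and follows essentially the same strategy as the paper: apply the median estimate of Lemma~\ref{lem:quposBalls}/\ref{lem:medianRough} to a ball $B$ and its companion $\tilde B$, invoke the hypothesis~\eqref{eq:veryWeakBloom}, and then perform a weighted computation to reach $\langle\nu^{1/k}\rangle_B$.

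The only difference lies in how you handle the weight inequality. The paper packages the step into Lemma~\ref{lem:basicAp}, which passes directly from $\mu(B)^{1/p}\lambda'(B)^{1/p'}/|B|$ to $\langle\nu^{1/k}\rangle_B^k$ via
\[
  \langle\mu\rangle_B^{1/p}\langle\lambda'\rangle_B^{1/p'}
  \lesssim\frac{1}{\langle\mu'\rangle_B^{1/p'}\langle\lambda\rangle_B^{1/p}}
  \leq\frac{1}{\langle\nu^{-1}\rangle_B}
  \leq\langle\nu^{1/k}\rangle_B^k,
\]
the last step being a single H\"older inequality $1=\langle\nu^{-1/(k+1)}\nu^{k/(k(k+1))}\rangle_B\leq\langle\nu^{-1}\rangle_B^{1/(k+1)}\langle\nu^{1/k}\rangle_B^{k/(k+1)}$. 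Your route instead detours through the intermediate bound $\mu(B)^{1/p}\lambda'(B)^{1/p'}\lesssim\nu(B)$ (using Cauchy--Schwarz on $\nu,\nu^{-1}$) and then invokes the reverse Jensen inequality $(\fint_B\nu)^{1/k}\lesssim\fint_B\nu^{1/k}$ for $A_\infty$ weights. Both are valid; the paper's version is slightly more elementary in that it avoids the reverse Jensen step and uses only plain H\"older, never needing to know that $\nu\in A_2\subset A_\infty$.
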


Let us first observe that \eqref{eq:veryWeakBloom} is indeed a weak form of the boundedness of $T_b^k:L^p(\mu)\to L^p(\lambda)$: if this boundedness holds, then
\begin{equation*}
\begin{split}
  \abs{\pair{T_b^k f}{g}}
  &\leq\Norm{T_b^k f}{L^p(\lambda)}\Norm{g}{L^{p'}(\lambda')}
  \leq\Norm{T_b^k}{L^p(\mu)\to L^p(\lambda)}\Norm{f}{L^p(\mu)}\Norm{g}{L^{p'}(\lambda')} \\
  &\leq \Norm{T_b^k}{L^p(\mu)\to L^p(\lambda)}\cdot \Norm{f}{\infty}\mu(B)^{1/p}\cdot\Norm{g}{\infty}\lambda'(\tilde B)^{1/p'},
\end{split}
\end{equation*}
and thus $\Theta\leq \Norm{T_b^k}{L^p(\mu)\to L^p(\lambda)}$.

Turning to the proof of Theorem \ref{thm:Bloom2}, we need a simple lemma, which is the only place where the $A_p$ condition is used.

\begin{lemma}\label{lem:basicAp}
Let $\lambda,\mu\in A_p$.
If $B,\tilde B$ are balls of equal radius $r$ with $\dist(B,\tilde B)\lesssim r$, then
\begin{equation*}
  \mu(B)^{1/p}\lambda'(\tilde B)^{1/p'}\lesssim \ave{\nu^{1/k}}_B^k\cdot \abs{B}
\end{equation*}
for all $k=1,2,\ldots$, where $\nu=(\mu/\lambda)^{1/p}$.
\end{lemma}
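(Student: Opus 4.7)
The plan is to sandwich the desired inequality between a Jensen step and a geometric-mean reformulation of the $A_p$ hypothesis, passing through the quantities $\exp\fint_B\log \mu$ and $\exp\fint_B\log\lambda'$.

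First, I would reduce to the case $\tilde B=B$. Since $B$ and $\tilde B$ have equal radius $r$ with $\dist(B,\tilde B)\lesssim r$, both sit inside a common ball of radius $\eqsim r$, and because $\lambda'\in A_{p'}$ is a doubling weight, $\lambda'(\tilde B)\lesssim \lambda'(B)$. Writing $\mu(B)=|B|\ave{\mu}_B$ and $\lambda'(B)=|B|\ave{\lambda'}_B$, it then suffices to establish the averaged bound
\begin{equation*}
  \ave{\mu}_{B}^{1/p}\ave{\lambda'}_{B}^{1/p'}\lesssim \ave{\nu^{1/k}}_{B}^{k}.
\end{equation*}

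Next, I would exploit the identity $\nu^{1/k}=\mu^{1/(pk)}(\lambda')^{1/(p'k)}$, which follows from $\lambda^{-1/p}=(\lambda')^{(p-1)/p}=(\lambda')^{1/p'}$. Jensen's inequality for the convex exponential then gives
\begin{equation*}
  \ave{\nu^{1/k}}_{B}
  =\fint_{B}\exp\Bigl(\tfrac{1}{pk}\log \mu+\tfrac{1}{p'k}\log\lambda'\Bigr)
  \geq \Bigl(\exp\fint_{B}\log\mu\Bigr)^{1/(pk)}\Bigl(\exp\fint_{B}\log\lambda'\Bigr)^{1/(p'k)},
\end{equation*}
and raising to the $k$th power bounds $\ave{\nu^{1/k}}_B^k$ below by $(\exp\fint_B\log\mu)^{1/p}(\exp\fint_B\log\lambda')^{1/p'}$.

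Finally, the $A_p$ condition supplies the reverse-Jensen (i.e.\ $A_\infty$) bound $\ave{\mu}_B\lesssim \exp\fint_B\log\mu$: indeed, Jensen applied to the concave logarithm gives $\ave{\mu^{-1/(p-1)}}_B^{p-1}\geq\exp(-\fint_B\log\mu)$, which combined with $\ave{\mu}_B\ave{\mu'}_B^{p-1}\leq [\mu]_{A_p}$ yields the claim; the parallel argument with $\lambda'\in A_{p'}$ gives $\ave{\lambda'}_B\lesssim\exp\fint_B\log\lambda'$. Multiplying the two estimates into the lower bound from the previous paragraph finishes the proof. The mildly subtle point to watch for is that the naive H\"older bound on $\ave{\nu^{1/k}}_B^k$ goes the \emph{opposite} direction from what is needed, so the argument must detour through geometric means, a detour made possible precisely by the $A_p$ hypothesis on both weights.
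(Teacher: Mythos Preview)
Your proof is correct, but it takes a different route from the paper's. The paper applies the $A_p$ definition directly to flip $\ave{\mu}_B^{1/p}$ to $\ave{\mu'}_B^{-1/p'}$ and $\ave{\lambda'}_B^{1/p'}$ to $\ave{\lambda}_B^{-1/p}$, then combines these via H\"older's inequality into $\ave{\nu^{-1}}_B^{-1}$, and finishes with a second H\"older step $1=\ave{\nu^{-1/(k+1)}\nu^{1/(k+1)}}_B\leq\ave{\nu^{-1}}_B^{1/(k+1)}\ave{\nu^{1/k}}_B^{k/(k+1)}$ to reach $\ave{\nu^{1/k}}_B^k$. Your detour through geometric means instead isolates the $A_\infty$ content of the hypothesis: the bound $\ave{w}_B\lesssim\exp\fint_B\log w$ that you extract from $A_p$ is exactly the ``reverse Jensen'' characterisation of $A_\infty$, and once this is available the Jensen lower bound on $\ave{\nu^{1/k}}_B$ closes the argument in one stroke, uniformly in $k$. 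The paper's approach is marginally more elementary (only H\"older, no logarithms), while yours makes transparent \emph{why} the estimate holds---namely that both weights lie in $A_\infty$---and your final remark about the naive H\"older going the wrong way is exactly the point: the paper circumvents this by passing to $\nu^{-1}$, you by passing to $\log\nu$.
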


\begin{proof}
We recall that all $A_p$ weights, and then also $\lambda'\in A_{p'}$, are doubling. Hence $\lambda'(\tilde B)\lesssim\lambda'(B)$.
We then use the $A_p$ property of both $\mu$ and $\nu$ directly via the definition (together with some basic algebra involving $p$ and $p'$) to see that
\begin{equation*}
\begin{split}
  \frac{\mu(B)^{1/p}\lambda'(\tilde B)^{1/p'}}{\abs{B}}
  \lesssim\ave{\mu}_B^{1/p}\ave{\lambda'}_B^{1/p'}
  \lesssim\frac{1}{\ave{\mu'}_B^{1/p'}\ave{\lambda}_B^{1/p}}
  \leq\frac{1}{\ave{(\mu')^{1/p'}\lambda^{1/p}}_B}
  =\frac{1}{\ave{\nu^{-1}}_B}
\end{split}
\end{equation*}
Finally,
\begin{equation*}
  1=\ave{\nu^{-\frac{1}{k+1}}\nu^{\frac{1}{k}\cdot\frac{k}{k+1}}}_B
  \leq\ave{\nu^{-1}}^{\frac{1}{k+1}}_B\ave{\nu^{\frac{1}{k}}}_B^{\frac{k}{k+1}},
\end{equation*}
and hence $\ave{\nu^{-1}}_B^{-1}\leq\ave{\nu^{1/k}}^k_B$.
\end{proof}

\begin{proof}[Proof of Theorem \ref{thm:Bloom2}]
As in the proof of the unweighted version in Theorem \ref{thm:pleqk}, we have (just copying \eqref{eq:iterBasic} from the said proof)
\begin{equation*}
  \Big(\fint_{B}\abs{b-\ave{b}_B}\Big)^k
  \leq\fint_{B}\abs{b-\ave{b}_{B}}^k
  \lesssim \sum_{i=1}^4 \frac{ \abs{ \pair{1_{\tilde E_j} }{ T_b^k 1_{E_j} } } }{\abs{B}},
\end{equation*}
for some subsets $E_j\subset B$, $\tilde E_j\subset\tilde B$, where $\tilde B$ is a ball of the same radius $r$ and $\dist(B,\tilde B)\eqsim r$.

By assumption \eqref{eq:veryWeakBloom} and Lemma \ref{lem:basicAp}, we have
\begin{equation*}
  \frac{ \abs{ \pair{1_{\tilde E_j} }{ T_b^k 1_{E_j} } } }{\abs{B}}
  \leq \Theta\frac{ \mu(B)^{1/p}\lambda'(\tilde B)^{1/p'} }{\abs{B}}
  \lesssim \Theta\ave{\nu^{1/k}}_B^k.
\end{equation*}
Hence
\begin{equation*}
   \Big(\fint_{B}\abs{b-\ave{b}_B}\Big)^k\lesssim \Theta\ave{\nu^{1/k}}_B^k,
\end{equation*}
which simplifies to $\Norm{b}{\BMO(\nu^{1/k})}\lesssim \Theta^{1/k}$.
\end{proof}

\subsubsection*{Acknowledgements}
I would like to thank Sauli Lindberg for bringing the problem about the Jacobian operator and its connection to commutators to my attention, and for pointing out some oversights in an earlier version of the manuscript. I would also like to thank Riikka Korte for discussions on the theme of the paper, and the anonymous referee for constructive suggestions on the presentation.

\subsubsection*{Declarations of interest:} None.

\end{document}